\newtheorem{theorem}{Theorem}[section]
\newtheorem{lemma}[theorem]{Lemma}
\newtheorem{proposition}[theorem]{Proposition}
\theoremstyle{definition}
\newtheorem{definition}[theorem]{Definition}
\newtheorem{conjecture}[theorem]{Conjecture}
\newtheorem{example}[theorem]{Example}
\newtheorem{nc}[theorem]{Notational Conventions}
\theoremstyle{remark}
\newtheorem{remark}[theorem]{Remark}
\numberwithin{equation}{section}
\newcommand{\R}{{\mathbb R}}
\newcommand{\Z}{{\mathbb Z}}
\newcommand{\bbZ}{{\mathbb Z}}
\newcommand{\rank}{\operatorname{rank}}
\newcommand{\Ker}{\operatorname{Ker}}
\newcommand{\diag}{\operatorname{diag}}
\newcommand{\Sym}{{\rm S}}
\newcommand{\sgn}{{\rm sgn}}
\newcommand{\M}{{\rm Mat}}
\newcommand{\In}{{\rm In}}
\newcommand{\Ic}{{\rm Ic}}
\newcommand{\nodeDist}{2}
\newcommand{\labelDist}{0.1}
\newcommand{\indeg}{{\rm indeg}}
\newcommand{\outdeg}{{\rm outdeg}}
\newcommand{\supp}{{\rm supp}}
\newcommand{\nullity}{{\rm null}}
\newcommand{\gr}{{\rm gr}}
\newcommand{\mon}{{\rm mon}}
\newcommand{\src}{{\rm src}}
\newcommand{\tar}{{\rm tar}}
\newcommand{\uni}{{\rm Eul}}
\newcommand{\interval}{\longleftrightarrow}
\newcommand\imCMsym[4][\mathord]{%
  \DeclareFontFamily{U} {#2}{}
  \DeclareFontShape{U}{#2}{m}{n}{
   <-6> #25
    <6-7> #26
    <7-8> #27
    <8-9> #28
    <9-10> #29
    <10-12> #210
    <12-> #212}{}
  \DeclareSymbolFont{CM#2} {U} {#2}{m}{n}
  \DeclareMathSymbol{#4}{#1}{CM#2}{#3}
}
\title{A graph-theoretic approach to a conjecture of Dixon and Pressman}
\author[Matthew Brassil and Zinovy Reichstein]{Matthew Brassil 
and Zinovy Reichstein}
\address
{Department of Mathematics \\
University of British Columbia \\
Vancouver
\\
CANADA}
\email{mbrassil@math.ubc.ca, reichst@math.ubc.ca}
\thanks
{Matthew Brassil was partially supported by a Graduate Research Fellowship from the University of British Columbia.
Zinovy Reichstein was partially supported by
National Sciences and Engineering Research Council of
Canada Discovery grant 253424-2017.}
\subjclass[2010]{05C38, 15A24, 15A54}
\keywords{Amitsur-Levitsky theorem, Dixon-Pressman conjecture, directed graph, Eulerian path, monomial order}
\begin{document}

\begin{abstract} Given $n \times n$ matrices, $A_1, \dots, A_k$, consider the linear operator \[ L(A_1,\dots,A_k) \colon \M_n \to \M_n \] 
given by
$L(A_1,\dots,A_k)(A_{k+1})=\sum_{\sigma\in S_{k+1}}\sgn(\sigma)A_{\sigma(1)}A_{\sigma(2)}\cdots A_{\sigma(k+1)}$. 
The Amitsur-Levitzki theorem asserts that $L(A_1, \ldots, A_k)$ is identically $0$ for every $k \geqslant 2n-1$.
Dixon and Pressman conjectured that if $k$ is an even number between $2$ and $2n - 2$, then
the kernel of $L(A_1, \ldots, A_k)$ is of dimension $k$ for
$A_1,\dots,A_k\in \M_n(\R)$ in general position. We prove this conjecture using graph-theoretic techniques.
\end{abstract}

\maketitle

\section{Introduction}
Recall that the standard polynomial $[A_1,\dots,A_m]$ in $m$ variables is defined as
\[ [A_1,\dots,A_m] = \sum_{\sigma\in S_m} \sgn(\sigma)A_{\sigma(1)}\cdots A_{\sigma(m)}\ .\]
The celebrated theorem of Amitsur and Levitzki~\cite{al} asserts that $[A_1,\dots,A_m] = 0$ 
for any $m \geqslant 2n$ and any $n \times n$-matrices $A_1, \dots, A_m \in \M_n(\Lambda)$ over a commutative ring $\Lambda$.
The original proof in~\cite{al} is quite involved. Simpler proofs have since been given by Swan~\cite{swan, swan2}, Razmyslov~\cite{razmyslov}, Rosset~\cite{rosset} and, most recently, Procesi~\cite{procesi-al}.

Let $F$ be a field. For a $k$-tuple of matrices $(A_1,\dots,A_k)$ with $A_i\in \M_n(F)$, let 
\[ L(A_1,\dots,A_k) \colon \M_n(F) \to \M_n(F) \]
be the linear operator
given by
\[L(A_1,\dots,A_k)(X):=[A_1,\dots,A_k,X]\ .\]
Dixon and Pressman investigated the kernel of this operator in \cite{dp}. When $k=1$, the kernel of $L(A_1)$ is the centralizer of $A_1$. When $k \geqslant  2n-1$, $L(A_1,\dots,A_k)$ is identically zero, by the Amitsur-Levitzki theorem. 
% Dixon and Pressman made the following conjecture in \cite{dp}.

\begin{conjecture}[Dixon, Pressman~\cite{dp}] \label{conj.main}
Suppose that $2\leqslant k\leqslant 2n-2$. Then for $A_1,\dots,A_k\in \M_n(\R)$ in general position, the nullity $d$ of $L(A_1,\dots,A_k)$ 
is given by 

\smallskip
(i) $d=k$, if $k$ is even,

\smallskip
(ii) $d=k+1$, if $k$ is odd and $n$ is even, and 

\smallskip
(iii) $d=k+2$, if both $k$ and $n$ are odd.
\end{conjecture}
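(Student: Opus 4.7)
The plan is to prove all three cases in parallel by combining an explicit construction of kernel elements (giving a lower bound on $d$) with a graph-theoretic rank analysis of $L(A_1,\ldots,A_k)$ at a well-chosen specialization, then passing to the generic statement via upper semicontinuity of nullity on the parameter space of $k$-tuples.

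For the lower bound I would start from the elementary observation that $[A_1,\ldots,A_k,A_i]=0$ by antisymmetry, so each $A_i$ lies in $\ker L(A_1,\ldots,A_k)$; in general position the $A_i$ are linearly independent and this gives $d\geqslant k$. When $k$ is odd, substituting $X=I$ and pushing the $I$ past the other factors (it commutes with everything) collapses the sum to
\[
[A_1,\ldots,A_k,I] \;=\; [A_1,\ldots,A_k]\,\sum_{j=0}^{k}(-1)^{j},
\]
which vanishes precisely when $k+1$ is even, so $I$ is a kernel element independent of the $A_i$ in general position and $d\geqslant k+1$. For the case $k$ odd, $n$ odd, I would search for one further canonical element: a natural candidate is a trace-corrected expression in the $A_i$ whose trace-adjustment is forced to vanish only when $n$ is odd, so that a third kernel vector appears precisely in this case.

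For the upper bound I would set up the graph-theoretic encoding used in Swan's proof of Amitsur--Levitzki: identify the variable $(A_\ell)_{i,j}$ with a directed edge $i\to j$ labeled $\ell$, so that the $(p,q)$-entry of $A_{\sigma(1)}\cdots A_{\sigma(k+1)}$ is a sum over walks of length $k+1$ from $p$ to $q$, and antisymmetrization in $\sigma$ reorganizes the whole expression as signed sums of Eulerian trails in $(k+1)$-edge labeled multigraphs on $\{1,\ldots,n\}$. The rank of $L$ becomes the dimension of the span of these Eulerian-trail contributions, which in turn becomes a combinatorial question about the multigraphs and their admissible trail orderings. I would then compute this rank at a sparse and highly symmetric specialization of $A_1,\ldots,A_k$---for instance matrices supported on disjoint cycles, a near-Hamiltonian path, or a carefully matched pattern respecting the parity of $n$---so that only a small, explicit family of multigraphs arises and $n^{2}-k$ (respectively $n^{2}-k-1$, $n^{2}-k-2$) linearly independent rows can be written down by hand.

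The main obstacle is this last step: the parities in the conjecture show that the governing cancellations are genuinely orientation-sensitive, and the specialization must simultaneously be sparse enough to make the Eulerian bookkeeping finite and transparent, yet rich enough to force the extra kernel dimensions in the odd cases without accidentally introducing spurious ones. Getting the rank to match the conjectured nullity exactly---so that semicontinuity upgrades the single-point computation to the generic statement---is where the graph-theoretic language, and in particular a careful monomial order on Eulerian trails, will have to do the real work.
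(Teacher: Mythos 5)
Your framework --- lower bound from antisymmetry plus the identity matrix, upper bound by choosing a sparse specialization and encoding entries of $L(A_1,\ldots,A_k)$ as signed Eulerian-trail counts in the style of Swan, with a monomial order organizing the rank computation and upper semicontinuity of nullity upgrading a single-point calculation to the generic statement --- matches the skeleton of the paper's argument for case (i). The observation that $[A_1,\ldots,A_k,I]=\bigl(\sum_{j=0}^{k}(-1)^j\bigr)[A_1,\ldots,A_k]$ collapses to $0$ when $k$ is odd is a correct and tidy explanation of the $d\geqslant k+1$ lower bound in cases (ii) and (iii), although Dixon and Pressman had already established the lower bounds in all three cases.

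The gap is that you never specify the specialization, and that is where the entire argument lives. The paper's choice is $A_\ell=D_\ell C^{s_\ell}$ with $D_\ell$ a generic diagonal matrix, $C$ the cyclic shift matrix, and exponents $s_\ell=\lceil\ell/2\rceil$ for $\ell\leqslant r$, $s_\ell=-\lceil(\ell-r)/2\rceil$ for $\ell>r$ (writing $k=2r$). The crucial payoff of this choice is a direct-sum decomposition $L=L_0\oplus\cdots\oplus L_{n-1}$ into $n\times n$ blocks indexed by shifted diagonals $V_j$, which collapses the $n^2\times n^2$ rank problem into $n$ manageable $n\times n$ pieces, each then replaced by its matrix of initial coefficients under a precisely tuned lexicographic order $P_0\succ P_{-1}\succ P_1\succ P_{-2}\succ\cdots$ on vertices. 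Your candidate specializations (``disjoint cycles, a near-Hamiltonian path'') are not presented with any such block structure in mind, and without something like it the $n^2\times n^2$ computation is not tractable. The classification of maximal graphs under that order (Proposition~\ref{max_graph_decomp}) and the explicit signed Eulerian counts feeding the nonsingularity of each $\Ic(L_j)$ (Lemmas~\ref{diag_Eulerian_sum_1}--\ref{Eulerian_sum_2}, \ref{Ic_L_j_ker_j_neq_0}, \ref{N_nonsingular}) occupy most of the paper and are precisely the ``real work'' your final paragraph concedes you have not done. Finally, the scope overreaches what the paper --- or anyone --- proves: the construction above is tuned to $k=2r$ even and establishes only case (i); for case (iii) you offer no argument for the extra kernel vector beyond a guess, and the odd-$k$ upper bound would require a genuinely different, parity-sensitive specialization and new Eulerian-count lemmas that the proposal does not supply.
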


Here, as usual, $\R$ is the field of real numbers and the nullity of a linear transformation is the dimension of its kernel.
Dixon and Pressman showed that $d \geqslant k$, $k+1$ and $k+2$ in cases (i), (ii), and (iii), respectively, and verified computationally
that equality holds for small values of $n$ and $k$. 
Note that one may view Conjecture~\ref{conj.main} and the Amitsur-Levitsky theorem as pointing in opposite directions:
Conjecture~\ref{conj.main} gives an upper bound on the generic nullity of $L(A_1, \ldots, A_k)$ for $2 \leqslant k \leqslant 2n-2$, 
whereas the Amitsur-Levitsky theorem gives a lower bound for $k \geqslant 2n -1$. 
%%%%%%%%%%%%%%%%%%%%%%%%%%%%%%%%%%%%%%%%%%%%%%%%%%%%%%%%%%%%%
% If we fix the matrix size $n$ and let $k$ increase from $2$ to $2n - 2$, Conjecture~\ref{conj.main}
% asserts that the generic rank of $L(A_1, \ldots, A_k)$ remains as large as possible ($n^2 - C$, where $C = k$, $k+1$ or $k + 2$
% in cases (i), (ii) and (iii), respectively). On the other hand, as soon as $k$ exceeds $2n - 1$, the rank of $L(A_1, \ldots, A_k)$ collapses to $0$ by the 
% %Amitsur-Levitsky theorem.
% So, Conjecture~\ref{conj.main} and the Amitsur-Levitsky theorem as pointing
% in the opposite directions: 
% the former is an upper bound on the nullity of $L(A_1, \ldots, A_k)$ where as the latter is a lower bound.
%%%%%%%%%%%%%%%%%%%%%%%%%%%%%%%%%%%%%%%%%%%%%%%%%%%%%%%%%%%%%

The purpose of this paper is to prove Conjecture~\ref{conj.main} in case (i). Our main result is the following.

\begin{theorem}\label{main_thm}
Let $k = 2r$ be a positive even integer and $F$ be an infinite field whose characteristic does not divide $2(2r+1)r!$. Assume that $n > r$. Then for $A_1,A_2,\dots,A_k\in \M_n(F)$ in general position, the nullity of $L(A_1,A_2,\dots,A_k)$ is $k$.
\end{theorem}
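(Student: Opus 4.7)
The nullity of $L(A_1,\ldots,A_k)$ is an upper semicontinuous function of $(A_1,\ldots,A_k)\in\M_n(F)^k$: the locus where it exceeds a given integer is cut out by the vanishing of suitable minors, hence Zariski closed. Since the standard polynomial is alternating, $L(A_1,\ldots,A_k)(A_i)=0$ for each $i$, so the kernel always contains $\sspan(A_1,\ldots,A_k)$; for $A_1,\ldots,A_k$ in general position this span has dimension $k$, giving the Dixon-Pressman lower bound $d\geqslant k$. Consequently, to prove the theorem it suffices to exhibit a \emph{single} tuple $(A_1^\circ,\ldots,A_k^\circ)\in\M_n(F)^k$ for which the nullity equals $k$ --- equivalently, for which some fixed $(n^2-k)\times(n^2-k)$ minor of the matrix of $L$, written in the basis of matrix units $e_{pq}$ of $\M_n(F)$, is nonzero.

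My plan is to build such a witness graph-theoretically. Interpret each matrix unit $e_{pq}$ as a directed edge $p\to q$ on the vertex set $\{1,\ldots,n\}$. Writing each $A_i$ as a linear combination of matrix units, the expansion of the standard polynomial $[A_1,\ldots,A_k,e_{st}]$ becomes a signed sum over pairs $(\sigma,(f_1,\ldots,f_k))$, where $\sigma\in S_{k+1}$ and $f_i$ is an edge chosen from the support of $A_i$; the contribution is nonzero only when the $k+1$ chosen edges, traversed in the order determined by $\sigma$ (with the ``$X$-edge'' $(s,t)$ inserted in position $\sigma^{-1}(k+1)$), form an Eulerian walk in the multigraph they span. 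The strategy is to restrict the $A_i$ to be supported on a carefully designed directed multigraph $G$ with $k$ labelled edges, to parameterize the nonzero entries by independent indeterminates $x_{i,pq}$, and to equip the polynomial ring in these variables with a well-chosen monomial order.

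Concretely, the aim is to choose $G$, the monomial order, and a list of $n^2-k$ ``test'' pairs $(s_1,t_1),\ldots,(s_{n^2-k},t_{n^2-k})$ so that the leading monomial of each matrix-unit entry of $L(A_1,\ldots,A_k)(e_{s_jt_j})$ arises from a unique signed Eulerian-path configuration in $G\cup\{(s_j,t_j)\}$, and so that the resulting leading monomials are pairwise distinct across the $n^2-k$ test entries. Uniqueness of the leading Eulerian path prevents the alternating sum over $S_{k+1}$ from killing the leading coefficient; the coefficient itself is a signed integer arising from the residual symmetry group of the leading path (typically permutations of parallel edges contributing an $r!$, together with cyclic symmetries contributing a factor of $2r+1$), and the hypothesis that $\cchar F\nmid 2(2r+1)r!$ guarantees it is nonzero in $F$. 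Distinctness of leading monomials then yields an $(n^2-k)\times(n^2-k)$ submatrix of $L$ that is triangular (after reordering) with nonzero diagonal, proving it has full rank.

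The main obstacle is the combinatorial construction itself: one must design $G$, the monomial order, and the list of test pairs so that the two conditions above hold simultaneously. The key structural fact that makes this possible is that $k=2r\leqslant 2n-2$, so $G$ has strictly fewer than $2n$ edges; the global sign cancellation underlying Swan's and Procesi's proofs of the Amitsur-Levitzki theorem is simply not forced in this regime, and one expects to engineer $G$ so that enough edges are ``one-way'' to pin down a unique leading Eulerian path for each prescribed endpoint pair. Once the construction is in hand, the specialization bound passes to all $(A_1,\ldots,A_k)$ in general position by semicontinuity, completing the proof.
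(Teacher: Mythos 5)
Your high-level blueprint matches the paper's: a semicontinuity argument reduces Theorem~\ref{main_thm} to exhibiting a single witness tuple with $\nullity(L)\leqslant k$ (this is Lemma~\ref{lem.zariski}); the entries of $L$ in the matrix-unit basis are signed Eulerian-path counts (Lemma~\ref{matrix_from_graph}); one passes to a matrix of leading coefficients with respect to a monomial order and shows it has full rank; and you even correctly anticipate the factorial $r!$ and the factor $2r+1$ that appear in the leading coefficients (compare Lemmas~\ref{diag_Eulerian_sum_1}--\ref{Eulerian_sum_2} and Lemma~\ref{N_nonsingular}). So the strategy is directionally sound.

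But the proposal leaves the combinatorial construction --- which is the entire content of Sections~\ref{sect.first-reductions}--\ref{section_j_0} --- ``to be designed,'' and the one concrete choice it does commit to would fail. You propose to support all $k$ matrices on a multigraph $G$ with only $k$ labelled edges. Then at most $2k$ vertices of $\{1,\dots,n\}$ carry an edge of $G$, and for any pair $(s,t)$ with both $s$ and $t$ outside the support of $G$, every term of $[A_1,\dots,A_k,E_{st}]$ vanishes: whichever $A_{\sigma(i)}$ is adjacent to $E_{st}$ in the product is supported on $G$ and so cannot have source $t$ or target $s$. Hence $E_{st}\in\Ker L$ for all such pairs, giving $\nullity(L)\geqslant(n-2k)^2$, which greatly exceeds $k$ once $n$ is large; the witness fails. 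The paper avoids this by taking $A_\ell=D_\ell C^{s_\ell}$, where $D_\ell$ is a \emph{fully generic} diagonal matrix (so each $A_\ell$ has $n$ independent nonzero entries, one per vertex) and $C$ is the cyclic shift with carefully chosen exponents $s_\ell$. This is not just a larger support: it forces $L$ to block-diagonalize as $L_0\oplus\cdots\oplus L_{n-1}$ with $n\times n$ blocks $L_j\colon V_j\to V_{j+s}$ indexed by the shifted diagonals $V_j$, and the whole leading-coefficient analysis --- the identification of maximal graphs in Proposition~\ref{max_graph_decomp} and the diagonal/triangular structure of $\Ic(L_j)$ in Sections~\ref{section_j_nonzero}--\ref{section_j_0} --- is carried out block by block on these $n\times n$ matrices. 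Without the decomposition one would have to exhibit an $(n^2-k)\times(n^2-k)$ triangular minor of the full $n^2\times n^2$ matrix, which (as the Introduction notes) proved intractable. So the gap is not an omission of routine detail: the specific specialization and the resulting block decomposition are precisely the idea that makes the Eulerian-path method converge to a proof, and the proposal both omits it and proposes a specialization inconsistent with it.
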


Our proof will rely on graph-theoretic techniques. To motivate it, let us briefly recall Swan's proof of the Amitsur-Levitsky theorem. 
Since the standard polynomial $[A_1, \ldots, A_m]$ is multi-linear in $A_1, \ldots, A_m$, it suffices to show that
$[E_{a_1 b_1}, \ldots, E_{a_m b_m}] = 0$ for any choice of $a_1, b_1, \ldots, a_m, b_m \in \{ 1, \ldots, n \}$, as long as $m \geqslant 2n$.
Here $E_{a b}$ denotes the elementary matrix with $1$ in the $(a, b)$-position and $0$s elsewhere. As we expand $[E_{a_1 b_1}, \ldots, E_{a_m b_m}]$,
the term $\sgn(\sigma) E_{a_{\sigma(1)} b_{\sigma(1)}} \ldots E_{a_{\sigma(m)} b_{\sigma(m)}}$ contributes $\sgn(\sigma) E_{a_{\sigma(1)} b_{\sigma(m)}}$
to the sum if 
\begin{equation} \label{e.path} b_{\sigma(1)} = a_{\sigma(2)}, \; \; b_{\sigma(2)} = a_{\sigma(3)}, \; \ldots \; , b_{\sigma(m-1)} = a_{\sigma(m)}, 
\end{equation}
and $0$ otherwise. Conditions~\eqref{e.path} can be conveniently rephrased in graph-theoretic terms. Let $G$ be the directed
graph with $n$ vertices, $1, \ldots, n$ and $m$ edges, $e_1 = (a_{1}, b_1), \ldots, e_m = (a_m, b_m)$. Then conditions~\eqref{e.path} hold if and only if
$e_{\sigma(1)}, \ldots, e_{\sigma(m)}$ form an Eulerian path on $G$. We will say that this Eulerian path is even if $\sigma$ is an even permutation and
odd otherwise. This way the Amitsur-Levitsky theorem reduces to the following graph-theoretic assertion. 

\smallskip
\begin{theorem} {\rm (}Swan~\cite{swan, swan2}{\rm )} \label{thm.swan}
Let $G$ be a directed graph with $n$ vertices and $m$ edges. Let $a$ and $b$ be two of the vertices (not necessarily distinct). If $m \geqslant 2n$,
then the number of even Eulerian paths from $a$ to $b$ equals the number of odd Eulerian paths from $a$ to $b$.
\end{theorem}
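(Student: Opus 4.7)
The plan is to exhibit a sign-reversing involution $\iota$ on the set of Eulerian paths from $a$ to $b$ in $G$; such an $\iota$ immediately pairs even Eulerian paths with odd ones, yielding the theorem.

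To construct $\iota$, I would exploit the hypothesis $m \geqslant 2n$ as follows. An Eulerian path traverses $m$ edges and therefore visits $m+1 \geqslant 2n+1$ vertices counted with multiplicity, so by the pigeonhole principle some vertex of $G$ is visited at least $\lceil (2n+1)/n \rceil = 3$ times. Fixing a total ordering on the vertex set of $G$, for each Eulerian path $\pi = e_{\sigma(1)}, \ldots, e_{\sigma(m)}$ I would canonically select a vertex $v = v(\pi)$ of largest multiplicity in the visit sequence of $\pi$, breaking ties by smallest label. Writing the visits to $v$ as $i_1 < i_2 < \cdots < i_t$ (with $t \geqslant 3$), we obtain a decomposition $\pi = P_0 \cdot C_1 \cdot C_2 \cdots C_{t-1} \cdot P_1$, in which $P_0$ runs from $a$ to $v$, each $C_j$ is a closed walk based at $v$, and $P_1$ runs from $v$ to $b$.

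I would then define $\iota(\pi)$ by exchanging two canonically chosen consecutive cycles in this decomposition, say $C_1$ and $C_2$. Because this swap preserves the multiset of vertex visits, the selection rule for $v$ is stable, and reapplying $\iota$ to $\iota(\pi)$ undoes the swap; hence $\iota$ is an involution. The effect on the underlying permutation $\sigma$ is a block transposition of the two adjacent segments of lengths $p = |C_1|$ and $q = |C_2|$, which multiplies $\sgn(\sigma)$ by $(-1)^{pq}$.

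The hard part will be arranging that this sign change is always $-1$. The naive swap achieves $(-1)^{pq} = -1$ only when both $p$ and $q$ are odd, and in general two consecutive cycles at $v$ need not have odd length. (Reversing one of the cycles, which would introduce an extra factor of $(-1)^{p(p-1)/2}$, is not available here because reversing a directed cycle does not yield a legal subpath of an Eulerian path in $G$.) Overcoming this parity obstruction is the genuine combinatorial content of Swan's theorem; I would expect the final definition of $\iota$ to split into cases by the parities of the $|C_j|$, scanning for a pair of consecutive cycles of compatible lengths, and to lean on the bound $t \geqslant 3$ (or a sharper pigeonhole statement producing a second multiply-visited vertex) to guarantee that such a pair is always present.
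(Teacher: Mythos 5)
The paper does not prove this theorem; it is cited to Swan~\cite{swan,swan2}, and the only piece of Swan's machinery the authors actually re-derive is the repeated-edge case (Lemma~\ref{repeated_edges}). So there is no in-paper proof to measure your proposal against; I can only assess it on its own terms.

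Your sketch correctly identifies the natural first move --- a sign-reversing involution built by swapping two consecutive closed walks $C_i$, $C_{i+1}$ based at a multiply-visited vertex $v$ --- and you correctly compute that this block transposition multiplies the sign by $(-1)^{|C_i|\,|C_{i+1}|}$, which equals $-1$ only when both lengths are odd. But the proposal stops exactly at the point where the real work begins. The pigeonhole count gives $t\geqslant 3$ visits, hence as few as two closed walks at $v$; if both have even length (for instance two $2$-cycles $v\to u_1\to v$ and $v\to u_2\to v$), the only swap available to you preserves the sign, and the pairing collapses. Nothing in the sketch excludes this, and no canonical rule is offered for selecting a different vertex or a different pair of walks so that the sign always reverses while the map remains an involution; these two requirements are in tension, since changing the selection rule after a swap can spoil involutivity. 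You flag this yourself as ``the genuine combinatorial content of Swan's theorem,'' which is the right diagnosis --- but it means the proposal is a plan, not a proof. The difficulty is real: Swan's original 1963 argument contained an error at essentially this point and had to be corrected~\cite{swan2}, and the correct proof requires an idea beyond adjacent block swaps at a single heavily-visited vertex.
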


\smallskip
If one were to use a similar approach to prove Conjecture~\ref{conj.main}, one would set
\begin{equation} \label{e.general-matrix} A_{\ell} = \sum_{a, b = 1}^n  x_{ab}^{(\ell)} E_{a b} , \end{equation}
for $\ell = 1, \ldots, k$. Here $x_{ab}^{(\ell)}$ are $k n^2$
independent variables. Each entry of the $n^2 \times n^2$ matrix of $L = L(A_1, \ldots, A_k)$ 
is then a multilinear polynomial of degree $k$ in the groups of variables, $\{x_{ab}^{(1)} \}, \ldots, \{ x_{ab}^{(k)} \}$.
(Here we identify the linear transformation $L(A_1, \ldots, A_k) \colon \M_n \to \M_n$
with its matrix in the standard basis $E_{ab}$ of $\M_n$.)
The coefficient of the monomial $x_{a_1 b_1}^{(1)} \cdot \ldots \cdot x_{a_k b_k}^{(k)}$ in a given position 
in $L$ can again be computed as 
the signed sum of Eulerian paths on a certain graph. However, for $k \leqslant 2n-2$, these signed sums will no longer 
be identically $0$. To prove Conjecture~\ref{conj.main}(i) in this way, one would need to assemble these coefficients into 
the $n^2 \times n^2$ matrix $L$ with polynomial entries, then show that the nullity of $L$ over the field $F(x_{ab}^{(\ell)})$ 
is $k$ (or equivalently, is $\leqslant k$). We are not able to carry out the computations directly in this setting; 
the matrix $L$ is too complicated.  To prove Theorem~\ref{main_thm} we will modify this approach in the following ways.

\smallskip
(1) We will specialize the matrices $A_{\ell}$ by setting some of the variables $x_{ab}^{(\ell)}$ equal to $0$. 
For the purpose of showing that $\nullity(L) \leqslant k$, this is sufficient. 
In fact, we will set $n^2-n$ entries of each $A_l$ equal to $0$; the other $n$ entries will remain independent variables.

\smallskip
(2) We will choose $A_1, \ldots, A_k$ so that  $L(A_1, \dots, A_k)$ decomposes as a direct sum 
\[ L(A_1, \dots, A_k) = L_0 \oplus L_1 \oplus \ldots \oplus L_{n-1}, \]
where each $L_i$ is represented by an $n \times n$ matrix. This simplifies our analysis of $L$ and 
reduces the problem to showing that $\nullity(L_0) + \nullity(L_1) + \ldots + \nullity(L_{n-1}) \leqslant k$.
The specific matrices we will use are described in Sections~\ref{sect.first-reductions} and~\ref{specialization}.

\smallskip
(3) To get a better handle on the nullities of $L_0, \ldots, L_{n-1}$, we will replace each $L_j$ by its
``matrix of initial coefficients" $\Ic(L_j)$ with respect to a certain lexicographic monomial order on 
the variables $x_{\ell,\alpha}$; see Section~\ref{sect.initial}. This will further simplify the computations in two ways.
First, the entries of $\Ic(L_j)$ will be integers, rather than polynomials. These integers will be obtained by
counting Eulerian paths on certain graphs, as in Swan's argument. Secondly, 
passing from $L_j$ to $\Ic(L_j)$ will allow us to focus only on the (rather special) 
graphs corresponding to leading monomials. 

\smallskip
We will classify these ``maximal graphs" in Sections~\ref{sect.maximal} 
and~\ref{sect.maximal_2} and complete the proof of Theorem~\ref{main_thm} in Sections~\ref{section_j_nonzero} and~\ref{section_j_0}.
The last part of the proof will rely on the computations of signed counts of Eulerian paths 
in Section~\ref{prelims}. The overall structure of the paper is shown in the flowchart below.
\begin{equation*}
\renewcommand{\nodeDist}{0.5}
\begin{tikzpicture}[baseline,vertex/.style={anchor=base, circle, fill,minimum size=4pt, inner sep=0pt, outer sep=0pt},auto,
                               edge/.style={->,>=latex, shorten > = 5pt,shorten < = 5pt},
position/.style args={#1:#2 from #3}{
    at=(#3.(#1-90)), anchor=#1+90, shift=(#1-90:#2)
}]

  \node (t12) {Theorem~\ref{main_thm}};
  \node[position=0:{\nodeDist} from t12] (p32) {Proposition~\ref{prop.red2}};
 \node[position=0:{\nodeDist} from p32] (p63) {Proposition~\ref{Ic_L_j_ker}};
  \node[position=80:{\nodeDist*2} from p63] (p72) {Proposition~\ref{max_graph_decomp}};
  \node[align=center, position=0:{\nodeDist*1.5} from p72] (l73) {Lemma~\ref{max_subgraphs}\\[3] Lemma~\ref{max_connecting_path}\\[3] Lemma~\ref{subgraph_connected}};
  \node[align=center, position=0:{\nodeDist*5} from p63] (p91) {Proposition~\ref{Ic_L_j_ker_j_neq_0}\\[3] Section~\ref{section_j_0}};

  \draw [decorate,decoration={brace,amplitude=5,mirror}] (p91.north west) node[below=3] (bnw) {} -- (p91.south west) node[above=3] (bsw) {};
  \draw [decorate,decoration={brace,amplitude=5}] (p91.north east) -- (p91.south east);
  \draw [decorate,decoration={brace,amplitude=5,mirror}] (l73.north west) -- (l73.south west);
  \draw [decorate,decoration={brace,amplitude=5}] (l73.north east) -- (l73.south east) node[above=3] (bse) {};

  \node[align=center, position=90:{\nodeDist*2.5} from bse] (l81) {Lemma~\ref{reachable_H}\\[3] Lemma~\ref{max_t}};

  \draw [decorate,decoration={brace,amplitude=5,mirror}] (l81.north west) -- (l81.south west);
  \draw [decorate,decoration={brace,amplitude=5}] (l81.north east) --  (l81.south east);

  \draw[-implies,shorten >=1, shorten <=1, double equal sign distance] (p32) to (t12);
  \draw[-implies,shorten >=1, shorten <=1, double equal sign distance] (p63) to (p32);
  \draw[-implies,shorten >=1, shorten <=1, double equal sign distance] (p91) to (p63);
  \draw[-implies,shorten >=1, shorten <=1, double equal sign distance] (p72.south west) to (p91);
  \draw[-implies,shorten >=1, shorten <=1, double equal sign distance] (l73) to (p72);
  \draw[-implies,shorten >=3, shorten <=10, double equal sign distance] (l81) to (bse);

  \node[position=-95:{\nodeDist*2} from bnw] (l24) {Lemmas~\ref{diag_Eulerian_sum_1},~\ref{diag_Eulerian_sum_2}};
  \node[position=-85:{\nodeDist*2} from bsw] (l26) {Lemmas~\ref{Eulerian_sum_1},~\ref{Eulerian_sum_2}};
  
  \draw[-implies,shorten >=3, shorten <=1, double equal sign distance] (l24) to (bnw);
  \draw[-implies,shorten >=3, shorten <=1, double equal sign distance] (l26) to (bsw);
\end{tikzpicture}
\end{equation*}

\section{Preliminaries on graphs and Eulerian paths}
\label{prelims}

Throughout this paper our graphs will all be directed with labeled edges and vertices. 
% Our main object of study is the signed count of Eulerian paths occurring in a graph. In this section we recall variants 
% on a theorem of Euler's theorem, and use these theorems to compute the signed count of Eulerian paths for a number of specific graphs. 
% These examples will be needed in the proof of Theorem \ref{main_thm} later on.
An Eulerian path in a graph $\Gamma$ is a path which visits every edge exactly once.  We will denote by $\uni_{a}(\Gamma)$ the set of Eulerian paths on $\Gamma$ which begin at a vertex $a$. It is easy to see that any two paths in $\uni_a(\Gamma)$ terminate at the same vertex.

For an edge $\underset{a}\bullet\underset{e}\rightarrow\underset{b}\bullet$ appearing in a graph $\Gamma$ 
% as $\underset{a}\bullet\underset{e}\rightarrow\underset{b}\bullet$
we define $\src_\Gamma(e)=a$ and $\tar_\Gamma(e)=b$ to be the source and target vertices of the edge $e$ respectively. 
We define the outdegree $\outdeg_{\Gamma}(v)$ to be the number of edges in $\Gamma$ whose source vertex is $v$, and the indegree $\indeg_{\Gamma}(v)$ 
to be the number of edges in $\Gamma$ whose target vertex is $v$. When the graph $\Gamma$ is clear from the context we will abbreviate these terms 
as $\src(e)$, $\tar(e)$, $\outdeg(v)$, and $\indeg(v)$.

An Eulerian path beginning and ending at the same vertex is known as an Eulerian circuit. 
The following fundamental theorem, due to Euler, is usually stated in terms of Eulerian circuits. 
In the sequel we will need a variant in terms of Eulerian paths. 

% \begin{definition} \label{def.add-edge}
% Let $\Gamma$ be a graph and $a$, $b$ be vertices of $\Gamma$. We will denote the graph obtained from $\Gamma$ by adding a directed edge $e$ from $b$ to $a$ by
% $\Gamma\ \CMamalg\ \underset{b}\bullet\underset{e}\rightarrow\underset{a}\bullet$. 
% \end{definition}

\begin{theorem} \label{eulerian_unic} 
Let $a$, $b$ be vertices of $\Gamma$, not necessarily distinct.
% and $\Gamma'=\Gamma\ \CMamalg\ \underset{b}\bullet\underset{e}\rightarrow\underset{a}\bullet$. 
% {\rm (Euler's Theorem)} 
There exists an Eulerian path from $a$ to $b$ on $\Gamma$, if and only if $\Gamma$ is connected and 
\begin{equation}\label{eulerian_degrees}
\begin{split}
\outdeg_{\Gamma}(v)&=\indeg_{\Gamma}(v)\ , \text{ for all $v\neq a, b$}\ ,\\
\outdeg_{\Gamma}(a)&=\indeg_{\Gamma}(a)+1\ , \text{ if $a\neq b$}\ ,\\
\outdeg_{\Gamma}({b})&=\indeg_{\Gamma}({b})-1\ , \text{ if $a\neq b$}\ ,\\
\outdeg_{\Gamma}(a)&=\indeg_{\Gamma}(a)\ , \text{ if $a= b$}\ .
\end{split}
\end{equation}
\end{theorem}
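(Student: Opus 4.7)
The plan is to prove the two implications separately, with the reverse implication being the substantive one. For necessity, suppose an Eulerian path $e_{i_1}, \dots, e_{i_m}$ from $a$ to $b$ exists. Since every edge is used, the underlying graph (restricted to vertices incident to some edge) is connected because consecutive edges in the path share a vertex. For the degree conditions, I would track, for each vertex $v$, the number of times the path enters $v$ versus the number of times it leaves $v$. Each interior occurrence of $v$ in the path contributes one entering edge and one leaving edge, while the initial vertex contributes one extra leaving edge and the terminal vertex contributes one extra entering edge. Adding these contributions yields \eqref{eulerian_degrees} in all three cases.

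For sufficiency, I would first reduce the case $a \ne b$ to the case $a = b$ by the standard trick of adjoining an auxiliary edge $e^*$ from $b$ to $a$; the degree conditions \eqref{eulerian_degrees} for the original graph translate exactly to the circuit-version of the conditions for the augmented graph, and an Eulerian path from $a$ to $b$ on $\Gamma$ is obtained from an Eulerian circuit based at $a$ in the augmented graph by deleting $e^*$ (placed last in the cyclic order). So it suffices to construct an Eulerian circuit based at $a$ in a connected graph all of whose vertices satisfy $\indeg = \outdeg$.

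For the circuit construction, I would use Hierholzer's splicing argument, proceeding by induction on the number of edges $m$. Starting from $a$, greedily traverse unused edges. The condition $\indeg(v) = \outdeg(v)$ at every vertex $v$ forces this walk to terminate only upon returning to $a$, since at any intermediate vertex the number of used incoming edges exceeds the number of used outgoing edges by exactly one, so an unused outgoing edge remains. This produces a closed walk $C$ based at $a$. If $C$ uses every edge, we are done. Otherwise, by the connectedness of $\Gamma$, there is some vertex $v$ on $C$ incident to an edge not in $C$. Delete the edges of $C$; the resulting graph still satisfies $\indeg = \outdeg$ at every vertex, and the connected component containing $v$ is a graph with strictly fewer edges, so the inductive hypothesis yields an Eulerian circuit $C'$ in that component based at $v$. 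Splicing $C'$ into $C$ at $v$ gives a longer closed walk based at $a$; repeating, we exhaust all edges.

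The main technical nuisance will be the bookkeeping in the splicing step and precisely formulating the auxiliary-edge reduction in the graph-with-labeled-edges formalism adopted in the paper, but no conceptually new ideas are needed. I expect the proof to be essentially classical and would cite Euler's original argument once the reductions are in place.
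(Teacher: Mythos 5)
Your proposal is correct and uses the same key reduction as the paper: adjoin an auxiliary edge from $b$ to $a$ to pass from Eulerian paths to Eulerian circuits, then invoke (or in your case, prove) the classical Euler circuit theorem. The only difference is that the paper simply cites the circuit version of Euler's theorem from Bollob\'as, whereas you supply a Hierholzer-style proof of it; that extra step is fine but not needed for the paper's purposes.
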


\begin{proof} If $a = b$, then this is the usual form of Euler's Theorem; see, e.g.,~\cite[Theorems 12, 13]{bollobas}. 

If $a \neq b$, let $\Gamma'$ be the graph obtained from $\Gamma$ by adding an edge from $b$ to $a$.  Eulerian paths $w$ from $a$ to $b$ on $\Gamma$ 
are in bijective correspondence with Eulerian circuits $w'$ on $\Gamma'$. Indeed, given $w$, we obtain $w'$ by appending  $e$ at the end. 
Conversely, given $w'$, after cyclically permuting the edges, we may assume that $e$ is the last edge in $w'$. Now $w$ is obtained from $w'$ by removing $e$. 

Finally, observe that conditions~\eqref{eulerian_degrees} are equivalent to $\outdeg_{\Gamma'}(v) = \indeg_{\Gamma'}(v)$ for every vertex $v$. Thus
Theorem~\ref{eulerian_unic} reduces to Euler's theorem for $\Gamma'$.
\end{proof}

% \begin{definition} \label{def.add-edge}
% Let $\Gamma$ be a graph and $a$, $b$ be vertices of $\Gamma$. We will denote the graph obtained from $\Gamma$ by adding a directed edge $e$ from $b$ to $a$ by
% $\Gamma\ \CMamalg\ \underset{b}\bullet\underset{e}\rightarrow\underset{a}\bullet$. 
% \end{definition}

Given a labeling of the edges $e_1,e_2,\dots, e_m$ in $\Gamma$, we define the signature $\sgn(w)$ of an Eulerian path $w = (e_{\sigma(1)}, \ldots, e_{\sigma(m)})$ to be the signature of the permutation $\sigma \in \Sym_m$. Note that changing the initial labeling $e_1, e_2, \ldots, e_m$ either 
leaves every $\sgn(w)$ unchanged or multiplies $\sgn(w)$ by $-1$ for every Eulerian path $w$. We will be particularly interested in
the signed sum $\sum_{w \in \uni_a(\Gamma)} \, \sgn(w)$; this sum is well-defined (i.e., is independent of the labeling of the edges) up to a factor of $-1$.

We say that a graph has no repeated edges if there are no distinct edges which share both source and target vertices. 
The following lemma is remarked upon by Swan; ~\cite[page~369]{swan}.

\begin{lemma}\label{repeated_edges}
Let $\Gamma$ be a graph with a repeated edge. Then $\sum_{w\in \uni_{a}(\Gamma)}\sgn(w)=0$ for any vertex $a$ of $\Gamma$.
\end{lemma}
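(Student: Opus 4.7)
The plan is to construct a sign-reversing, fixed-point free involution on $\uni_a(\Gamma)$. Suppose $e$ and $e'$ are two distinct edges of $\Gamma$ sharing both source and target; such a pair exists by hypothesis. Label the edges $e_1, \ldots, e_m$ in some fixed order, and think of an Eulerian path $w \in \uni_a(\Gamma)$ as a permutation $\sigma \in \Sym_m$ via $w = (e_{\sigma(1)}, \ldots, e_{\sigma(m)})$.

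Define $\phi \colon \uni_a(\Gamma) \to \uni_a(\Gamma)$ by the rule: locate the positions $p, q \in \{1, \ldots, m\}$ at which the edges $e$ and $e'$ occur in $w$, and let $\phi(w)$ be the path obtained by swapping these two entries. In terms of permutations, if $\sigma(p) = $ index of $e$ and $\sigma(q) = $ index of $e'$, then $\phi(w)$ corresponds to $\sigma \circ (p \; q)$ (or equivalently $(i\; j) \circ \sigma$ where $i, j$ are the labels of $e, e'$). Because $e$ and $e'$ share both source and target, swapping them at positions $p$ and $q$ preserves the Eulerian condition~\eqref{e.path} at every junction, so $\phi(w)$ is again an Eulerian path from $a$ (and to the same terminal vertex).

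Clearly $\phi$ is an involution. It has no fixed points, because $e \neq e'$ are distinct labels, so $p \neq q$ and the transposition is nontrivial. Finally, $\sgn(\phi(w)) = -\sgn(w)$, since multiplication by a transposition reverses parity. Pairing $w$ with $\phi(w)$ therefore partitions $\uni_a(\Gamma)$ into pairs on which the signatures cancel, giving $\sum_{w \in \uni_a(\Gamma)} \sgn(w) = 0$.

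This argument is essentially a bookkeeping exercise, and I do not expect any genuine obstacle; the only point requiring a moment of care is verifying that $\phi(w)$ remains a valid Eulerian path, which follows immediately from the fact that $\src(e) = \src(e')$ and $\tar(e) = \tar(e')$.
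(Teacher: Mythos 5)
Your argument is correct and is essentially identical to the paper's: the paper also pairs each Eulerian path with the one obtained by swapping the two repeated edges, observes that this is a sign-reversing bijection (they phrase it as a bijection between the paths with $e_1$ before $e_2$ and those with $e_2$ before $e_1$, which is the same fixed-point-free involution), and concludes the signed sum vanishes.
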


\begin{proof}
Let $e_1$ and $e_2$ be a pair of repeated edges. Let us partition the Eulerian paths in $\uni_{a}(\Gamma)$ into two groups, $\uni_1$ and $\uni_2$, as follows:
$w \in \uni_1$ if $e_1$ occurs before $e_2$ in $w$ and $w \in \uni_2$ if $e_2$ occurs before $e_1$. 
Given an Eulerian path $w$ on $\Gamma$, we can form a new Eulerian path $w'$ by interchanging $e_1$ and $e_2$. This way we obtain a bijective correspondence 
between $\uni_1$ and $\uni_2$. Since we have performed a transposition to get from $w$ to $w'$, $\sgn(w')=-\sgn(w)$. 
This shows that  % $\sum_{w\in \uni_1}\sgn(w)= - \sum_{w' \in \uni_2} \sgn(w')$ and thus 
\[ \sum_{w\in \uni_{a}(\Gamma)}\sgn(w)= \sum_{w\in \uni_1}\sgn(w) +  \sum_{w' \in \uni_2} \sgn(w') = 0 , \] as desired.
\end{proof}

The remainder of this section will be devoted to computing $\sum_{w\in \uni_{a}(\Gamma)}\sgn(w)$ for several families of graphs which will arise in the sequel.

\begin{lemma} \label{diag_Eulerian_sum_1}
If
\renewcommand{\nodeDist}{2}
\renewcommand{\labelDist}{0.1}
\[\Gamma\ =\ \begin{tikzpicture}[baseline,vertex/.style={anchor=base, circle, fill,minimum size=4pt, inner sep=0pt, outer sep=0pt},auto,
                               edge/.style={->,>=latex, shorten > = 5pt,shorten < = 5pt},
position/.style args={#1:#2 from #3}{
    at=(#3.#1), anchor=#1+180, shift=(#1:#2)
}]

  \node[vertex] (v0) {};
  \node[vertex,position = 110:{\nodeDist} from v0] (v1L) {};
  \node[vertex,position = 150:{\nodeDist} from v0] (v2L) {};
  \node[vertex,position = 190:{\nodeDist} from v0] (v3L) {};
  \node[vertex,position = 250:{\nodeDist} from v0] (v4L) {};

  \draw (v0) node[right] {$P_0$};
  \draw (v1L) node[above] {$P_{1}$};
  \draw (v2L) node[left] {$P_{2}$};
  \draw (v3L) node[left] {$P_{3}$};
  \draw (v4L) node[below] {$P_{\alpha}$};

  \draw[edge] (v0) to [bend left=10] node[left] {$S_{1}$} (v1L);
  \draw[edge] (v1L) to [bend left=10] node[right] {$T_{1}$} (v0);

  \draw[edge] (v0) to [bend left=10] node[left = 10] {$S_{2}$} (v2L);
  \draw[edge] (v2L) to [bend left=10] node[above = 14, left = 4] {$T_{2}$} (v0);

  \draw[edge] (v0) to [bend left=10] node[below] {$S_{3}$} (v3L);
  \draw[edge] (v3L) to [bend left=10] node[above] {$T_{3}$} (v0);

  \draw[dashed,shorten > = 10pt, shorten < = 10pt, bend right=10] (v3L) -- (v4L);

  \draw[edge] (v0) to [bend left=10] node[below = 4, right] {$S_{\alpha}$} (v4L);
  \draw[edge] (v4L) to [bend left=10] node[below=2, left] {$T_{\alpha}$} (v0);
\end{tikzpicture}
\]
then
% \begin{equation}\label{eq_diag_Eulerian_sum_1}
$ \displaystyle \sum_{w\in \uni_{P_a}(\Gamma)}\sgn(w)=\begin{cases}
\pm \alpha!\ ,\ &\text{ if $a=0$, or}\\
\pm (\alpha-1)!\ &\text{ otherwise.}
\end{cases}
$
\end{lemma}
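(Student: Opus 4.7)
The plan is to apply Theorem~\ref{eulerian_unic} to classify the Eulerian paths on $\Gamma$ and then enumerate and sign-count them via the natural bijection with permutations of the $\alpha$ petals. Note first that $\indeg_\Gamma(P_0) = \outdeg_\Gamma(P_0) = \alpha$ and $\indeg_\Gamma(P_i) = \outdeg_\Gamma(P_i) = 1$ for each $i \geqslant 1$. Since every vertex is balanced, Theorem~\ref{eulerian_unic} tells us that every Eulerian path on $\Gamma$ is a closed Eulerian circuit.

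In the case $a = 0$, any Eulerian circuit at $P_0$ is built by repeatedly choosing an unused petal $i$, traversing $S_i$ out to $P_i$ and $T_i$ back to $P_0$ (the only available return edge). This gives a bijection between $\uni_{P_0}(\Gamma)$ and $\Sym_\alpha$, namely $\pi \mapsto w_\pi := (S_{\pi(1)}, T_{\pi(1)}, \dots, S_{\pi(\alpha)}, T_{\pi(\alpha)})$, so $|\uni_{P_0}(\Gamma)| = \alpha!$. With the edge labeling $e_{2i-1} := S_i$, $e_{2i} := T_i$, the permutation $\sigma \in \Sym_{2\alpha}$ with $w_\pi = (e_{\sigma(1)}, \dots, e_{\sigma(2\alpha)})$ is the block permutation on $\alpha$ blocks of length $2$ induced by $\pi$, so $\sgn(\sigma) = \sgn(\pi)^2 = +1$. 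Every circuit thus has the same signature, giving $\sum_{w \in \uni_{P_0}(\Gamma)} \sgn(w) = \pm\alpha!$.

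For $a \neq 0$, any Eulerian circuit starting at $P_a$ must begin with $T_a$ (the unique outgoing edge at $P_a$) and end with $S_a$ (the unique incoming edge), with the intermediate edges forming an Eulerian circuit at $P_0$ on the graph obtained from $\Gamma$ by deleting the $a$-th petal. This gives a bijection with $\Sym_{\{1,\dots,\alpha\}\setminus\{a\}}$, so $|\uni_{P_a}(\Gamma)| = (\alpha-1)!$. A fixed ``base'' circuit, say the one corresponding to the identity permutation on $\{1,\dots,\alpha\}\setminus\{a\}$, has a signature determined by an overall cyclic shift of the edge labels; every other circuit differs from this base only by a permutation of the length-$2$ blocks of the remaining petals, contributing a further factor of $\sgn(\pi)^2 = +1$. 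Hence all $(\alpha-1)!$ circuits again share a common signature, yielding $\sum_{w \in \uni_{P_a}(\Gamma)} \sgn(w) = \pm(\alpha-1)!$.

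The only delicate point, which I would verify carefully, is the claim that all Eulerian circuits share a common signature. This rests on the \emph{block-permutation observation}: because each pair $(S_i, T_i)$ appears consecutively as a length-$2$ block in every circuit, reordering the petals amounts to permuting length-$2$ blocks, which is always an even permutation. Once this observation is in hand, both counts become immediate consequences of Euler's theorem.
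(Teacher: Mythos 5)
Your proof is correct and takes essentially the same route as the paper's: you establish the bijection between circuits from $P_0$ and permutations of the $\alpha$ petals, observe that reordering length-$2$ blocks $(S_i,T_i)$ is an even permutation (so all circuits share a signature), and reduce the $a\neq 0$ case to the $a=0$ case on the graph with the $a$-th petal removed. The explicit $\sgn(\pi)^2=+1$ block-permutation computation and the appeal to Theorem~\ref{eulerian_unic} are small additions, but the substance is identical.
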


\begin{proof}
% We first count the Eulerian paths from $P_a$ on $\Gamma$ using Theorem \ref{eulerian_unic}(b). Any Eulerian path from $P_a$ on $\Gamma$ must also end at 
% $P_a$. Let $\Gamma'=\Gamma\ \CMamalg\ \underset{P_a}\bullet\underset{e}\rightarrow\underset{P_a}\bullet$. As 
% $\underset{P_a}\bullet\underset{e}\rightarrow\underset{P_a}\bullet$  is a loop, it can not appear in any spanning tree of $\Gamma'$. Thus
% $t_{P_{a}}(\Gamma')=t_{P_a}(\Gamma)=1$. For $v=0,1,2,\dots,\alpha$, we have
% \[
% \indeg_{\Gamma'}(P_v)=\begin{cases}
% 1, &\text{if $v\neq 0$ and $a\neq v$,}\\
% 2, &\text{if $v\neq 0$ and $a= v$,}\\
% \alpha, &\text{if $v=0$ and $a\neq v$,}\\
% \alpha+1, &\text{if $v=0$ and $a= v$.}\\
% \end{cases}
% \]
% By Theorem~\ref{eulerian_unic}(b), we thus have
% \[ |\uni_{P_a}(\Gamma)|=\begin{cases}
%  \alpha!\ ,\ &\text{ if $a=0$, or}\\
%  (\alpha-1)!\ ,\ &\text{ otherwise.}
% \end{cases}\]
First assume $a = 0$. There are $\alpha!$ Eulerian paths from $P_0$ on $\Gamma$, determined by the order in which each of the vertices $P_1, \ldots, P_{\alpha}$ are visited. Each is of the form
\[w_\tau=(S_{\tau(1)},T_{\tau(1)},S_{\tau(2)},T_{\tau(2)},\dots,S_{\tau(\alpha)},T_{\tau(\alpha)})\]
for $\tau\in \Sym_{\alpha}$.
It thus suffices to show that these $\alpha!$ Eulerian paths all have the same signature. Indeed,
the edges of $w_\tau$ come in groups of $2$, being $(S_1,T_1), \dots, (S_\alpha,T_\alpha)$. Interchanging any two of these groups 
results in an even permutation of the edges. Thus 
\[\sum_{w\in \uni_{P_0}(\Gamma)}\sgn(w)=\sum_{\tau\in\Sym_{\alpha}}\sgn(w_\tau)=\pm \alpha!\ . \]
Now assume that $a\neq 0$. In this case the Eulerian paths on $\Gamma$ from $P_a$ are precisely those of the form
$w=(T_a,w',S_a)$,
where $w'$ is an Eulerian path from $P_0$ on $\Gamma\setminus\{S_a,T_a\}$. As we showed above, there $(\alpha - 1)!$
possibilities for $w'$, and they all have the same signature; hence,
\[  \displaystyle \sum_{w\in \uni_{P_a}(\Gamma)}\sgn(w)= \pm (\alpha-1)!. \]
\end{proof}

\begin{lemma}\label{diag_Eulerian_sum_2}
Let $\alpha\geqslant 2$ and 
\renewcommand{\nodeDist}{2}
\renewcommand{\labelDist}{0.1}
\[\Gamma=
\begin{tikzpicture}[baseline,vertex/.style={anchor=base, circle, fill,minimum size=4pt, inner sep=0pt, outer sep=0pt},auto,
                               edge/.style={->,>=latex, shorten > = 5pt,shorten < = 5pt},
position/.style args={#1:#2 from #3}{
    at=(#3.#1), anchor=#1+180, shift=(#1:#2)
}]

  \node[vertex] (v0) {};
  \node[vertex,position = 140:{\nodeDist} from v0] (v1) {};
  \node[vertex,position = 40:{\nodeDist} from v0] (v2) {};
  \node[vertex,position = 190:{\nodeDist} from v0] (v3) {};
  \node[vertex,position = 240:{\nodeDist} from v0] (v4) {};
  \node[vertex,position = 320:{\nodeDist} from v0] (v5) {};

  \draw (v0) node[above=5] {$P_0$};
  \draw (v1) node[left] {$P_{1}$};
  \draw (v2) node[right] {$P_{2}$};
  \draw (v3) node[left] {$P_{3}$};
  \draw (v4) node[below] {$P_{4}$};
  \draw (v5) node[below] {$P_{\alpha}$};

  \draw[edge] (v0) to [bend left=10] node[below=5, left=-3] {$S_{1}$} (v1);
  \draw[edge] (v1) to [bend left=10] node[above=2, right] {$T_{1}$} (v0);

  \draw[edge] (v1) to [bend left=20] node[above] {$e$} (v2);

  \draw[edge] (v0) to [bend left=10] node[above=2, left] {$S_{2}$} (v2);
  \draw[edge] (v2) to [bend left=10] node[below=5, right] {$T_{2}$} (v0);

  \draw[edge] (v0) to [bend left=10] node[below=10, left=5] {$S_{3}$} (v3);
  \draw[edge] (v3) to [bend left=10] node[above=3, left=5] {$T_{3}$} (v0);

  \draw[edge] (v0) to [bend left=10] node[below = 4, right] {$S_{4}$} (v4);
  \draw[edge] (v4) to [bend left=10] node[below=2, left=3] {$T_{4}$} (v0);

  \draw[dashed, shorten > = 14pt, shorten < = 14pt, bend right=10] (v4) -- (v5);

  \draw[edge] (v0) to [bend left=10] node[right] {$S_{\alpha}$} (v5);
  \draw[edge] (v5) to [bend left=10] node[below=2] {$T_{\alpha}$} (v0);
\end{tikzpicture}\ .
\]
Then $\displaystyle \sum_{w\in \uni_{P_1}(\Gamma)}\sgn(w)=\pm(\alpha-1)!$.
\end{lemma}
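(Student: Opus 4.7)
The plan is to enumerate Eulerian paths from $P_1$ according to where the distinguished edge $e$ sits in the path, show that within each of three natural classes all paths share a common signature, and then relate the three signatures.

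From the in/out-degree counts at each vertex of $\Gamma$, Theorem~\ref{eulerian_unic} guarantees that every Eulerian path starting at $P_1$ terminates at $P_2$. Since any such path uses $e$ exactly once, I partition the paths into three types according to the position of $e$: (a) $e$ is the first edge, so the path factors as $e\cdot w$ with $w\in\uni_{P_2}(\Gamma_0)$, where $\Gamma_0:=\Gamma\setminus\{e\}$ is the spoked graph of Lemma~\ref{diag_Eulerian_sum_1} with $\alpha$ leaves; (b) $e$ is the last edge, so the path factors as $w\cdot e$ with $w\in\uni_{P_1}(\Gamma_0)$; (c) $e$ occurs strictly between the first and last positions, in which case the path factors as $w_1\cdot e\cdot w_2$, where for some $I\subseteq\{3,\dots,\alpha\}$, $w_1$ begins with $T_1$, performs the excursions $(S_j,T_j)$ for $j\in I$ from $P_0$, and ends with $S_1$, while $w_2$ begins with $T_2$, performs the complementary excursions, and ends with $S_2$. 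Lemma~\ref{diag_Eulerian_sum_1} applied to $\Gamma_0$ yields $(\alpha-1)!$ paths in each of types (a) and (b); type (c) contains $\sum_{I\subseteq\{3,\dots,\alpha\}}|I|!\,(\alpha-2-|I|)!=(\alpha-1)!$ paths.

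I then claim that every path in a given type has the same signature. For types (a) and (b) this is exactly the argument used in Lemma~\ref{diag_Eulerian_sum_1}: swapping two excursions $(S_i,T_i)\leftrightarrow(S_j,T_j)$ permutes four edges by $(1\,3)(2\,4)$, which is even. For type (c) the same reasoning handles reorderings within $w_1$ or within $w_2$, but one also needs to show that transferring an excursion across $e$ is sign-preserving. This is the step I expect to be the main obstacle. Explicitly, the local rearrangement on five consecutive edges reads
\[ (S_j,T_j,S_1,e,T_2)\ \longleftrightarrow\ (S_1,e,T_2,S_j,T_j), \]
which is a single $5$-cycle and therefore even. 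Hence any two type-(c) paths are connected by a chain of sign-preserving moves, and I denote the common signatures of the three types by $\epsilon_a$, $\epsilon_b$, $\epsilon_c$.

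It remains to relate $\epsilon_a,\epsilon_b,\epsilon_c$. I would inspect the canonical representatives
\[ w^{(a)}=(e,T_2,S_1,T_1,S_3,T_3,\dots,S_\alpha,T_\alpha,S_2), \]
\[ w^{(b)}=(T_1,S_2,T_2,S_3,T_3,\dots,S_\alpha,T_\alpha,S_1,e), \]
\[ w^{(c)}=(T_1,S_1,e,T_2,S_3,T_3,\dots,S_\alpha,T_\alpha,S_2) \]
and compute the cycle structures of the corresponding permutations relative to the base ordering $S_1,T_1,e,S_2,T_2,S_3,T_3,\dots,S_\alpha,T_\alpha$. One checks that the permutation carrying $w^{(a)}$ to $w^{(b)}$ is a single cycle of length $2\alpha+1$, hence has sign $(-1)^{2\alpha}=+1$, while the permutation carrying $w^{(a)}$ to $w^{(c)}$ is a single $4$-cycle supported on the first four positions, hence has sign $-1$. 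This gives $\epsilon_b=\epsilon_a$ and $\epsilon_c=-\epsilon_a$, so
\[ \sum_{w\in\uni_{P_1}(\Gamma)}\sgn(w)\;=\;(\epsilon_a+\epsilon_b+\epsilon_c)(\alpha-1)!\;=\;\epsilon_a(\alpha-1)!\;=\;\pm(\alpha-1)!, \]
as required.
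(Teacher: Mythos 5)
Your proposal is correct and follows essentially the same route as the paper's proof: partition the Eulerian paths from $P_1$ into three classes according to whether $e$ is the first edge, the last edge, or an interior edge; show that each class contains $(\alpha-1)!$ paths of a common signature; and conclude that the first two classes cancel against the third with one copy of $(\alpha-1)!$ surviving. The only cosmetic differences are in your counting of the interior class (the paper treats them as orderings of the $\alpha-1$ circuits $(S_1,e,T_2),(S_3,T_3),\dots,(S_\alpha,T_\alpha)$ from $P_0$, while you sum $|I|!\,(\alpha-2-|I|)!$ over subsets $I$) and in how you compare signatures across classes (you compute cycle lengths of the relating permutations, while the paper exhibits two explicit paths and compares directly); these are equivalent and the cycle computations you give check out.
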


\begin{proof}
Let us subdivide the Eulerian paths from $P_1$ on $\Gamma$ into three groups, $\Lambda_1$, $\Lambda_2$ and $\Lambda_3$, depending on whether the edge $e$ occurs at the beginning, the end or the middle of the path. 

It follows from Lemma~\ref{diag_Eulerian_sum_1}, that there are $(\alpha-1)!$ paths in $\Lambda_1$, all having the same signature,
and there are $(\alpha-1)!$ paths in $\Lambda_2$, all having the same signature. Moreover,
the signature of a path from $\Lambda_1$ is the same as the signature of a path from $\Lambda_2$. 
This can be seen directly by comparing the signatures of, say $(e,T_2,S_1,T_1,\tau,S_2) \in \Lambda_1$ and
$(T_1,\tau,S_2,T_2,S_1,e) \in \Lambda_2$, 
where $\tau=(S_3,T_3,\dots,S_{\alpha},T_{\alpha})$ is a path from $P_0$ to $P_0$. A simple calculation shows that
\[ \sgn(e,T_2,S_1,T_1,\tau,S_2)=\sgn(T_1,\tau,S_2,T_2,S_1,e), \]
as claimed. We now turn our attention to $\Lambda_3$. Any path in $\Lambda_3$ begins with edge $T_1$ and ends with edge $S_2$. 
These paths are determined by the order in which the $\alpha - 1$ circuits $(S_1, e, T_2)$, $(S_3, T_3)$, $\ldots$, $(S_{\alpha}, T_{\alpha})$
from $P_0$ are traversed in the subgraph
\renewcommand{\nodeDist}{2}
\[
\begin{tikzpicture}[baseline,vertex/.style={anchor=base, circle, fill,minimum size=4pt, inner sep=0pt, outer sep=0pt},auto,
                               edge/.style={->,>=latex, shorten > = 5pt,shorten < = 5pt},
position/.style args={#1:#2 from #3}{
    at=(#3.#1), anchor=#1+180, shift=(#1:#2)
}]

  \node[vertex] (v0) {};
  \node[vertex,position = 140:{\nodeDist} from v0] (v1) {};
  \node[vertex,position = 40:{\nodeDist} from v0] (v2) {};
  \node[vertex,position = 190:{\nodeDist} from v0] (v3) {};
  \node[vertex,position = 240:{\nodeDist} from v0] (v4) {};
  \node[vertex,position = 320:{\nodeDist} from v0] (v5) {};

  \draw (v0) node[above=5] {$P_0$};
  \draw (v1) node[left] {$P_{1}$};
  \draw (v2) node[right] {$P_{2}$};
  \draw (v3) node[left] {$P_{3}$};
  \draw (v4) node[below] {$P_{4}$};
  \draw (v5) node[below] {$P_{\alpha}$};

  \draw[edge] (v0) to [bend left=10] node[below=5, left=-3] {$S_{1}$} (v1);

  \draw[edge] (v1) to [bend left=20] node[above] {$e$} (v2);

  \draw[edge] (v2) to [bend left=10] node[below=5, right] {$T_{2}$} (v0);

  \draw[edge] (v0) to [bend left=10] node[below=10, left=5] {$S_{3}$} (v3);
  \draw[edge] (v3) to [bend left=10] node[above=3, left=5] {$T_{3}$} (v0);

  \draw[edge] (v0) to [bend left=10] node[below = 4, right] {$S_{4}$} (v4);
  \draw[edge] (v4) to [bend left=10] node[below=2, left=3] {$T_{4}$} (v0);

  \draw[dashed, shorten > = 14pt, shorten < = 14pt, bend right=10] (v4) -- (v5);

  \draw[edge] (v0) to [bend left=10] node[right] {$S_{\alpha}$} (v5);
  \draw[edge] (v5) to [bend left=10] node[below=2] {$T_{\alpha}$} (v0);
\end{tikzpicture}\ \text{.}
\]
Since interchanging any two of these circuits results in an even permutation of the edges, all paths in $\Lambda_3$ have the same signature. 
Moreover, a path in $\Lambda_3$ has the opposite signature to the paths in $\Lambda_1$ and $\Lambda_2$, as illustrated by
\[\sgn(T_1,S_2,T_2,\tau,S_1,e)= -\sgn(T_1,S_1,e,T_2,\tau,S_2)\ ,\]
where $\tau=(T_3,S_3,\dots,T_{\alpha},S_{\alpha})$.
Thus 
\begin{align*}
\sum_{w\in \uni_{P_1}(\Gamma)} \sgn(w) & =  
\pm \big( \sum_{w_1 \in \Lambda_1} \sgn(w_1) + \sum_{w_2 \in \Lambda_2} \sgn(w_2) + \sum_{w_3 \in \Lambda_3} \sgn(w_3) \big) \\
 & = 
\pm \big( 2(\alpha-1)!-(\alpha-1)! \big) = \pm (\alpha-1)!. 
\end{align*}
\end{proof}

\begin{lemma}\label{Eulerian_sum_1}
Let $a,b\in\{0,1,2,\dots,\alpha\}$ and 
\renewcommand{\nodeDist}{2}
\renewcommand{\labelDist}{0.1}
\[\Gamma=
\begin{tikzpicture}[baseline,vertex/.style={anchor=base, circle, fill,minimum size=4pt, inner sep=0pt, outer sep=0pt},auto,
                               edge/.style={->,>=latex, shorten > = 5pt,shorten < = 5pt},
position/.style args={#1:#2 from #3}{
    at=(#3.#1), anchor=#1+180, shift=(#1:#2)
}]

  \node[vertex] (v0) {};
  \node[vertex,position = 85:{\nodeDist} from v0] (v1L) {};
  \node[vertex,position = 127.5:{\nodeDist} from v0] (v2L) {};
  \node[vertex,position = 170:{\nodeDist} from v0] (v3L) {};
  \node[vertex,position = 250:{\nodeDist} from v0] (v4L) {};

  \draw (v0) node[position=310:{\labelDist} from v0] {$P_0$};
  \draw (v1L) node[above] {$P_{1}$};
  \draw (v2L) node[above] {$P_{2}$};
  \draw (v3L) node[left] {$P_{3}$};
  \draw (v4L) node[below] {$P_{\alpha}$};

  \draw[edge] (v0) to [bend left=10] node[left] {$S_{1}$} (v1L);
  \draw[edge] (v1L) to [bend left=10] node[right] {$T_{1}$} (v0);

  \draw[edge] (v0) to [bend left=10] node[above = 10, left = 8] {$S_{2}$} (v2L);
  \draw[edge] (v2L) to [bend left=10] node[above = 18, left = -3] {$T_{2}$} (v0);

  \draw[edge] (v0) to [bend left=10] node[below] {$S_{3}$} (v3L);
  \draw[edge] (v3L) to [bend left=10] node[above] {$T_{3}$} (v0);

  \draw[dashed,shorten > = 10pt, shorten < = 10pt, bend right=10] (v3L) -- (v4L);

  \draw[edge] (v0) to [bend left=10] node[right] {$S_{\alpha}$} (v4L);
  \draw[edge] (v4L) to [bend left=10] node[left] {$T_{\alpha}$} (v0);
\end{tikzpicture}
\quad\CMcoprod\quad
\begin{tikzpicture}[baseline,vertex/.style={anchor=base, circle, fill,minimum size=4pt, inner sep=0pt, outer sep=0pt},auto,
                               edge/.style={->,>=latex, shorten > = 5pt,shorten < = 5pt},
position/.style args={#1:#2 from #3}{
    at=(#3.#1), anchor=#1+180, shift=(#1:#2)
}]

  \node[vertex] (vb) {};
  \draw (vb) node[below] {$P_{b}$};
  \draw[edge] (vb) to [loop above] node[above] {$e$} (vb);
\end{tikzpicture}\ .
\]
Then
\begin{equation}\label{eq_Eulerian_sum_1}
\sum_{w\in \uni_{P_a}(\Gamma)}\sgn(w)=\begin{cases}
\pm (\alpha+1)!\ ,\ &\text{ if $a = b = 0$,}\\
\pm 2(\alpha-1)!\ ,\ &\text{ if $a=b \neq 0$,}\\
\pm \alpha!\ ,\ &\text{ if $a=0$ and $b\neq 0$, or $a\neq 0$ and $b=0$,}\\
\pm (\alpha-1)!\ ,\ &\text{in all other cases,}
\end{cases}
\end{equation}
where for fixed $b$, the sum $\sum_{w\in \uni_{P_a}(\Gamma)}\sgn(w)$ is either positive for all $a$, or negative for all $a$.
\end{lemma}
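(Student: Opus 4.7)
The plan exploits two observations about $\Gamma$. First, every vertex has equal in- and out-degree: writing $N_a := \outdeg_\Gamma(P_a)$, we have $N_0 = \alpha+1$ if $b = 0$ and $N_0 = \alpha$ otherwise, while $N_j = 2$ if $j = b \neq 0$ and $N_j = 1$ for $j \notin \{0, b\}$. In particular every Eulerian path on $\Gamma$ is a closed circuit. Second, $\Gamma$ has $m = 2\alpha+1$ edges, so cyclically rotating an edge sequence of length $m$ by one position is an $m$-cycle of sign $(-1)^{m-1} = +1$. Hence the signature of an Eulerian circuit is independent of the vertex at which we start, and $\sgn$ descends to a well-defined function on Eulerian circuits viewed up to cyclic rotation.

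Given this, each Eulerian circuit $C$ admits exactly $N_a$ distinct representatives in $\uni_{P_a}(\Gamma)$ (one per outgoing edge at $P_a$), each carrying the common signature $\sgn(C)$. Consequently
\[ \sum_{w \in \uni_{P_a}(\Gamma)} \sgn(w) \;=\; N_a \cdot S, \qquad \text{where} \quad S := \sum_{C} \sgn(C), \]
with $C$ ranging over Eulerian circuits up to cyclic rotation. Since $N_a$ is a positive constant independent of the sum, the values on the left for varying $a$ are positive scalar multiples of the single integer $S$. This at once proves the uniform-sign-across-$a$ assertion and reduces the magnitude calculation to a single evaluation of $|S|$.

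To evaluate $|S|$ I would specialize the identity at $a = 0$. The paths in $\uni_{P_0}(\Gamma)$ are precisely the $\alpha!$ star paths $w_\tau$ on $\Gamma \setminus \{e\}$ from $P_0$ (all of the same signature, by Lemma~\ref{diag_Eulerian_sum_1}) with the loop $e$ inserted at one of the visits to $P_b$. Under the canonical labelling $S_i \mapsto 2i-1$, $T_i \mapsto 2i$, $e \mapsto 2\alpha+1$, inserting $e$ at vertex-position $j$ of $w_\tau$ is obtained from the appended sequence $(w_\tau, e)$ by a leftward shift of $e$ through $2\alpha - j$ positions, a permutation of sign $(-1)^{2\alpha-j}$. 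When $b = 0$ every relevant $j$ is even and when $b \neq 0$ the unique relevant $j$ is odd, so in both cases the insertions share a common sign and counting gives $\sum_{w\in\uni_{P_0}(\Gamma)}\sgn(w) = \pm(\alpha+1)!$ and $\pm\alpha!$ respectively. Dividing by $N_0 = \alpha+1$ or $\alpha$ yields $|S| = \alpha!$ when $b = 0$ and $|S| = (\alpha-1)!$ when $b \neq 0$, and multiplying by $N_a$ in each of the remaining cases produces the five magnitudes in~\eqref{eq_Eulerian_sum_1}.

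The step one would ordinarily expect to be the main obstacle --- showing that the sum has the same sign for every starting vertex --- is dissolved at the outset by the observation that a single cyclic rotation of an edge sequence of odd length is an even permutation; everything that follows is routine bookkeeping.
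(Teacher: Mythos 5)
Your proof is correct, and while it rests on the same key observation as the paper's argument --- that cyclically rotating a circuit of odd length $2\alpha+1$ is an even permutation, so signature is well-defined on Eulerian circuits up to rotation --- your $N_a\cdot S$ factorization is a genuinely tidier way to organize the counting. The paper proves the uniform-signature claim the same way (any Eulerian path is a cyclic permutation of a canonical representative, and block interchanges of the $(S_i,T_i)$ pairs are even) but then handles each of the five cases of \eqref{eq_Eulerian_sum_1} by a separate ad hoc enumeration of circuits. Your observation that $\sum_{w\in\uni_{P_a}(\Gamma)}\sgn(w)=N_a\cdot S$ with $N_a=\outdeg(P_a)$ reduces all five cases to the single computation of $|S|$ (done by specializing at $a=0$), and simultaneously dispatches the assertion about uniformity of sign across $a$ since $N_a>0$. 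Two small points worth making explicit if you write this up: (i) the factorization requires that each Eulerian circuit, viewed as a cyclic edge sequence, passes through $P_a$ exactly $N_a$ times, and that these $N_a$ rotations are pairwise distinct as elements of $\uni_{P_a}(\Gamma)$ --- both hold here because $\Gamma$ is connected with $P_a$ of positive degree and all edges distinct; and (ii) in your insertion argument for $b\neq 0$ the odd position $j$ at which $e$ is inserted depends on $\tau$, so you are implicitly using that $\sgn(w_\tau)$ is constant over $\tau$ (Lemma~\ref{diag_Eulerian_sum_1}) together with the constancy of the parity of $j$ to conclude that all $\alpha!$ resulting paths share a sign.
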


\begin{proof}
% We will show that for fixed $b$, every Eulerian path on $\Gamma$ from any choice of initial vertex $P_a$ has the same signature, and show that the number of Eulerian % paths matches that in \eqref{eq_Eulerian_sum_1}.
Any Eulerian path on $\Gamma$ is a cyclic permutation of either
\[(e,S_{\tau(1)},T_{\tau(1)},S_{\tau(2)},T_{\tau(2)},\dots,S_{\tau(\alpha)},T_{\tau(\alpha)})\]
if $b=0$, or
\[(S_{\tau(1)},e,T_{\tau(1)},S_{\tau(2)},T_{\tau(2)},\dots,S_{\tau(\alpha)},T_{\tau(\alpha)})\]
if $b\neq 0$, for some permutation $\tau\in \Sym_{\alpha}$.  
Interchanging any two blocks of the form $(S_{i_1},T_{i_1})$ and $(S_{i_2},T_{i_2})$ induces 
an even permutation of the edges. Cyclically permuting a path of length $2 \alpha+1$ also induces an even permutation of the edges. 
Hence every Eulerian path has the same fixed signature when $b=0$, and every Eulerian path has the same (opposite) fixed signature when $b\neq 0$.

If $a = b = 0$, the Eulerian paths from $P_0$ are determined by the order in which the 
$\alpha + 1$ circuits $e$, $(S_1, T_1)$, $\ldots$, $(S_{\alpha}, T_{\alpha})$ are traversed. Thus there are $(\alpha + 1)!$ Eulerian paths in this case.

If $a = b \neq 0$, then $e$ occurs either at the beginning or the end of each Eulerian path. Lemma~\ref{diag_Eulerian_sum_1}
tells us that there are $(\alpha-1)!$ Eulerian paths starting with $e$ and $(\alpha-1)!$ Eulerian paths ending with $e$.
Thus the total number of Eulerian paths from $P_a$ in this case is $2 (\alpha-1)!$.

If $a = 0$ and $b \neq 0$, then Eulerian paths from $P_a$ are in bijective correspondence with permutations of the $\alpha$ circuits, 
$(S_b, e, T_b)$ and $(S_i, T_i)$, where $i \neq b$. 

If $a \neq 0$ and $b = 0$, then every path starts with $T_a$ and ends with $S_a$, and the Eulerian paths from $P_a$ are in bijective correspondence 
with permutations of the remaining $\alpha$ circuits $e$ and $(S_i, T_i)$, where $i \neq a$.

Finally, if $a$, $b$ are distinct and non-zero, then again every path starts with $T_a$ and ends with $S_a$, so the count is the same as above, 
except that instead we only have $\alpha - 1$ circuits, $(S_b, e, T_b)$ and $(S_i, T_i)$, $i \neq a, b$.
\end{proof}

\begin{lemma}\label{Eulerian_sum_2}
If
\[\Gamma=
\begin{tikzpicture}[baseline,vertex/.style={anchor=base, circle, fill,minimum size=4pt, inner sep=0pt, outer sep=0pt},auto,
                               edge/.style={->,>=latex, shorten > = 5pt,shorten < = 5pt},
position/.style args={#1:#2 from #3}{
    at=(#3.#1), anchor=#1+180, shift=(#1:#2)
}]

  \node[vertex] (v0) {};
  \node[vertex,position = 0:{\nodeDist*2/3} from v0] (va1) {};
  \node[position = 0:{\nodeDist} from va1] (va2) {};
  \node[vertex,position = 0:{\nodeDist*2/3} from va2] (va3) {};
  \node[vertex,position = 110:{\nodeDist} from v0] (v1L) {};
  \node[vertex,position = 150:{\nodeDist} from v0] (v2L) {};
  \node[vertex,position = 190:{\nodeDist} from v0] (v3L) {};
  \node[vertex,position = 250:{\nodeDist} from v0] (v4L) {};

  \draw (v0) node[below=12, right=-1] {$P_0$};
  \draw (va3) node[below=2] {$P_{\alpha+\beta}$};
  \draw (v1L) node[above] {$P_{1}$};
  \draw (v2L) node[left] {$P_{2}$};
  \draw (v3L) node[left] {$P_{3}$};
  \draw (v4L) node[below] {$P_{\alpha}$};

  \draw[edge] (va3) to [loop above] node[above] {$e$} (va3);

  \draw[edge] (v0) to [bend left=10] node[above=8, right=-3] {$S_{\alpha+1}$} (va1);
  \draw[edge] (va1) to [bend left=10] node[below=9, right=-3] {$T_{\alpha+1}$} (v0);

  \draw[dashed,shorten > = 10pt, shorten < = 10pt, bend right=10] (va1) -- (va2);

  \draw[edge] (va2) to [bend left=10] node[above=8, left=-6] {$S_{\alpha+\beta}$} (va3);
  \draw[edge] (va3) to [bend left=10] node[below=9, left=-6] {$T_{\alpha+\beta}$} (va2);

  \draw[edge] (v0) to [bend left=10] node[left] {$S_{1}$} (v1L);
  \draw[edge] (v1L) to [bend left=10] node[right] {$T_{1}$} (v0);

  \draw[edge] (v0) to [bend left=10] node[left = 10] {$S_{2}$} (v2L);
  \draw[edge] (v2L) to [bend left=10] node[above = 14, left = 4] {$T_{2}$} (v0);

  \draw[edge] (v0) to [bend left=10] node[below] {$S_{3}$} (v3L);
  \draw[edge] (v3L) to [bend left=10] node[above] {$T_{3}$} (v0);

  \draw[dashed,shorten > = 10pt, shorten < = 10pt, bend right=10] (v3L) -- (v4L);

  \draw[edge] (v0) to [bend left=10] node[below = 4, right] {$S_{\alpha}$} (v4L);
  \draw[edge] (v4L) to [bend left=10] node[below=2, left] {$T_{\alpha}$} (v0);
\end{tikzpicture}
\]
then
$\displaystyle \sum_{w\in \uni_{P_{\alpha+\beta}}(\Gamma)}\sgn(w)=\pm 2\alpha ! $ .
\end{lemma}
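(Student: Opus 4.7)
The plan is to exploit a cut argument to show that every Eulerian circuit on $\Gamma$ beginning at $P_{\alpha+\beta}$ has a rigid structure, and then reduce the signed count to Lemma~\ref{diag_Eulerian_sum_1}. First I would verify that every vertex of $\Gamma$ satisfies $\outdeg = \indeg$ (at $P_0$ both equal $\alpha+1$, at each petal vertex $P_1, \ldots, P_\alpha$ both equal $1$, at each interior chain vertex both equal $2$, and at $P_{\alpha+\beta}$ both equal $2$, with the loop $e$ contributing once to each). By Theorem~\ref{eulerian_unic}, every element of $\uni_{P_{\alpha+\beta}}(\Gamma)$ is in fact an Eulerian circuit returning to $P_{\alpha+\beta}$.

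Next, for each $\alpha \leqslant j \leqslant \alpha+\beta-1$, I would consider the cut separating $\{P_{j+1}, \ldots, P_{\alpha+\beta}\}$ from the rest of $\Gamma$ (with the convention $P_\alpha := P_0$). Only the edges $S_{j+1}$ and $T_{j+1}$ cross this cut, so any Eulerian circuit traverses it exactly twice, once in each direction. Since it begins and ends on the right side, it must leave via $T_{j+1}$ and later return via $S_{j+1}$. Applying this inductively for $j = \alpha+\beta-1, \alpha+\beta-2, \ldots, \alpha$, together with an analysis of the two out-edges and two in-edges at $P_{\alpha+\beta}$, forces every $w \in \uni_{P_{\alpha+\beta}}(\Gamma)$ to take the form
\[
w \;=\; \bigl(\varepsilon_1,\ T_{\alpha+\beta}, T_{\alpha+\beta-1}, \ldots, T_{\alpha+1},\ \pi,\ S_{\alpha+1}, S_{\alpha+2}, \ldots, S_{\alpha+\beta},\ \varepsilon_2\bigr),
\]
where exactly one of $\varepsilon_1, \varepsilon_2$ is the loop $e$ (the other being empty) and $\pi$ is an Eulerian circuit from $P_0$ on the $\alpha$-petal subgraph appearing in Lemma~\ref{diag_Eulerian_sum_1}.

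By Lemma~\ref{diag_Eulerian_sum_1} with $a = 0$ there are $\alpha!$ choices for $\pi$, all carrying the same signature. Combined with the two placements of $e$, this gives $|\uni_{P_{\alpha+\beta}}(\Gamma)| = 2\alpha!$, and it remains to show all these circuits share a common sign. For fixed placement of $e$, varying $\pi$ only reshuffles edges within the $\pi$-block, so the equal-signature conclusion is inherited from Lemma~\ref{diag_Eulerian_sum_1}. Swapping the placement of $e$ while keeping $\pi$ fixed amounts to the cyclic permutation of the $m = 2\alpha + 2\beta + 1$ edges that sends position $1$ to position $m$, whose signature is $(-1)^{m-1} = (-1)^{2\alpha + 2\beta} = +1$. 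Hence every circuit in $\uni_{P_{\alpha+\beta}}(\Gamma)$ has the same signature, and the signed sum equals $\pm 2\alpha!$.

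The main obstacle will be making the cut argument fully rigorous, in particular the claim that the loop $e$ must appear at the very beginning or end of $w$ rather than somewhere in the middle. This ultimately follows from the cut at $j = \alpha+\beta-1$ together with the fact that the two out-edges and two in-edges at $P_{\alpha+\beta}$ are accounted for by the initial leave, one intermediate leave-arrive pair, and the final arrive, which forces $e$ to be adjacent to either $T_{\alpha+\beta}$ at the start or $S_{\alpha+\beta}$ at the end.
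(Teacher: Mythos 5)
Your proof is correct and takes essentially the same approach as the paper: identify the rigid structure of every Eulerian circuit from $P_{\alpha+\beta}$ (traverse the chain of $T$-edges to $P_0$, tour the $\alpha$-petal subgraph, return via the $S$-edges, with the loop $e$ at the very start or end), invoke Lemma~\ref{diag_Eulerian_sum_1} for the $\alpha!$ equal-signature tours of the petal subgraph, and observe that the two placements of $e$ are related by a cyclic shift of all $2\alpha+2\beta+1$ edges, which is an even permutation. Your cut argument supplies a more explicit justification for the claimed structural form, which the paper simply asserts, but the underlying reasoning is identical.
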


\begin{proof}
Any Eulerian path from $P_{\alpha+\beta}$ on $\Gamma$ is either of the form
\begin{align*}
&(e,T_{\alpha+\beta},\dots,T_{\alpha+1},w,S_{\alpha+1},\dots,S_{\alpha+\beta})\ ,\quad \text{ or}
&(T_{\alpha+\beta},\dots,T_{\alpha+1},w,S_{\alpha+1},\dots,S_{\alpha+\beta},e)\ ,
\end{align*}
where $w$ is an Eulerian path from $P_0$ on the subgraph of $\Gamma$ consisting of edges $S_1,T_1,S_2,T_2$, $\dots,S_\alpha,T_\alpha$. These two Eulerian paths on $\Gamma$ for fixed $w$ are related by the even permutation cyclically permuting the $2\alpha+2\beta+1$ edges, and all $\alpha!$ Eulerian paths from $P_0$ on the subgraph have the same signature, by Lemma \ref{Eulerian_sum_1}. Thus the signed count of Eulerian paths on $\Gamma$ from $P_{\alpha+\beta}$ is $\pm 2\alpha!$.
\end{proof}

\section{Proof of Theorem \ref{main_thm}: First reductions}\label{first_reductions}
\label{sect.first-reductions}

Fix integers $k, n \geqslant 1$. % (Note that $k$ is not assumed to be even at this point.) 
Recall from the Introduction that given a $k$-tuple of $n \times n$ matrices $A_1, \ldots, A_k$, we defined
the linear transformation
$L(A_1, \ldots, A_k) \colon \M_n \to \M_n$ by $L(A_1, \ldots, A_k)(A_{k+1}) = [A_1, \ldots, A_k, A_{k+1}]$.
We will identify $L(A_1, \ldots, A_k)$ with its matrix relative to the standard basis $ \{ E_{a b} \, | \, a, b = 1, \ldots, n \}$ of elementary matrices in $\M_n$.
Here $E_{a b}$ is the elementary matrix with $1$ in the $(a, b)$-position and $0$s elsewhere; we will sometimes write
$E_{a, b}$ in place of $E_{a b}$.

Let $W_{\nullity \, > \, i} \subset (\M_n)^k$ be the locus of $k$-tuples $(A_1, \dots, A_k) \in \M_n$ such that 
\[ \text{$\nullity(L(A_1, \ldots, A_k))> i$ or equivalently, $\rank(L(A_1, \ldots, A_k)) < n^2 - i$.} \]
Clearly 
$\emptyset = W_{\nullity \, >  \, n^2} \subseteq W_{\nullity \,  > \, n^2 -1} \subseteq \ldots \subseteq W_{\nullity \, > \, 0} \subseteq W_{\nullity \, > \,  -1} = (\M_n)^k.$

\begin{lemma} \label{lem.zariski} (a) $W_{\nullity \, > \, i}$ is Zariski closed in $(\M_n)^k$ for every integer $i$.

\smallskip
(b) Assume $k \leqslant n^2$. Then $W_{\nullity \, > \, k-1} = (\M_n)^k$ and hence, 
\[ \nullity(L(A_1, \ldots, A_k)) \geqslant k \] for any
$A_1, \ldots, A_k \in \M_n$.

\smallskip
(c) Assume that $k$ is even and $2 \leqslant n \leqslant 2n - 2$.
In order to prove Theorem~\ref{main_thm} it suffices to show that there exists a field $K$ containing $F$
and $k$ matrices $A_1, \ldots, A_k \in \M_n(K)$ such that \[ \nullity(L(A_1, \dots, A_k)) \leqslant k . \]
\end{lemma}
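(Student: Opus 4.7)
The three parts fit together as a standard semicontinuity-plus-descent package. Part (a) says that nullity is lower semicontinuous in the Zariski topology; part (b) is a universal lower bound on that nullity; and part (c) reduces the "generic nullity equals $k$" statement to finding a single witness tuple, potentially over an extension field. My plan is to dispatch each part directly.

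For (a), the entries of the matrix $L(A_1, \ldots, A_k)$ are polynomial (in fact multilinear) in the entries of $A_1, \ldots, A_k$, by the definition of the standard polynomial. The locus in $\M_{n^2}$ of matrices of rank strictly less than $n^2 - i$ is cut out by the vanishing of all $(n^2 - i) \times (n^2 - i)$ minors, hence is Zariski closed. Pulling back under the polynomial map $(A_1, \ldots, A_k) \mapsto L(A_1, \ldots, A_k)$ exhibits $W_{\nullity > i}$ as a Zariski closed subset of $(\M_n)^k$.

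For (b), I would use the fact that the standard polynomial $[X_1, \ldots, X_{k+1}]$ is alternating in its arguments. Consequently, for every $i \in \{1, \ldots, k\}$,
\[ L(A_1, \ldots, A_k)(A_i) \;=\; [A_1, \ldots, A_k, A_i] \;=\; 0, \]
so $\Span(A_1, \ldots, A_k) \subseteq \ker L(A_1, \ldots, A_k)$. On the Zariski open subset $U \subseteq (\M_n)^k$ consisting of linearly independent $k$-tuples — nonempty because $k \leqslant n^2 = \dim \M_n$ — we therefore have $\nullity(L(A_1, \ldots, A_k)) \geqslant k$, i.e.\ $U \subseteq W_{\nullity > k-1}$. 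Since $(\M_n)^k$ is irreducible (it is affine space) and $W_{\nullity > k-1}$ is closed by (a), it must coincide with all of $(\M_n)^k$.

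For (c), by (b) we always have $\nullity(L(A_1, \ldots, A_k)) \geqslant k$, so showing equality generically amounts to showing that the open set $V := (\M_n)^k \setminus W_{\nullity > k}$ contains an $F$-dense subset. The key point is that $V$ is defined over $F$: by (a), $W_{\nullity > k}$ is cut out by polynomials whose coefficients lie in $F$ (indeed in $\Z$). A $K$-point of $V$ (that is, a tuple $(A_1, \ldots, A_k) \in \M_n(K)^k$ with $\nullity(L) \leqslant k$) exhibits $V$ as a nonempty Zariski open subset of an affine space over the infinite field $F$, and hence its $F$-points are Zariski dense. This yields the "general position" conclusion.

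There is no serious mathematical obstacle here; the argument is structural. The spot most prone to oversight is (c), where one must check that $W_{\nullity > k}$ is defined over $F$ (not merely over $K$), so that existence of a witness over an extension genuinely translates into density over $F$ via the infinite-field hypothesis. Everything else is bookkeeping.
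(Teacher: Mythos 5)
Your argument is correct and follows essentially the same route as the paper for all three parts: minors of the polynomial matrix $L$ for (a), the alternating property plus irreducibility of $(\M_n)^k$ for (b), and the observation that $W_{\nullity\,>\,k}$ is defined over $F$ (indeed over $\Z$), so a single $K$-point in the complement forces it to be a proper closed subset with dense open complement over the infinite field $F$, for (c). Your extra remark about rational points of a nonempty open subset of affine space over an infinite field being dense is exactly the unstated step the paper relies on.
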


\begin{proof} (a) The entries of the $n^2 \times n^2$ matrix 
$L(A_1, \ldots, A_k) \colon \M_n \to \M_n$ are polynomials in the entries of $A_1, \dots, A_k$. By definition $W_{\nullity \, > \, i} \subset (\M_n)^k$ 
is the common zero locus of the determinants of the $(n^2 -i) \times (n^2 -i)$-minors of this matrix. 
These determinants are again polynomials in the entries of $A_1, \dots, A_k$, and part (a) follows.

(b) Since the standard polynomial $[X_1,\dots, X_{k+1}]$ is alternating in $X_1, \dots, X_{k+1}$, we have
\[ \text{$L(A_1, \dots, A_k)(A_j) = 0$ for every $j = 1, \ldots, k$.} \] In other words, the kernel of $L(A_1,A_2,\dots,A_k)$ 
contains the span of $A_1, \dots, A_k$. If $A_1, \ldots, A_k$ are linearly independent, this shows that $\nullity(L(A_1, \ldots, A_k)) \geqslant k$.
In other words, $W_{\nullity \, > \, k-1}$  contains the dense open subvariety of $\M_n^k$ consisting of linearly independent $k$-tuples of $n \times n$ matrices.
Since $W_{\nullity \, > \, k-1}$ is Zariski closed by part (a), we conclude that $W_{\nullity \, > \, k-1} = \M_n^k$.
% On the other hand, if $A_1, \ldots, A_k$ are linearly dependent, then $L(A_1, \ldots, A_k) = 0$ 
% and $\nullity(L(A_1, \ldots, A_k)) = n^2 \geqslant k$.

(c) Note that under the assumptions of Theorem~\ref{main_thm}, $k = 2r < 2n$. Hence, $k \leqslant n^2$, and part (b) 
applies. In view of part (b), Theorem~\ref{main_thm} is equivalent to the assertion that $W_{\nullity \, > \, k} \neq (\M_n)^k$. 
By part (a), $W_{\nullity \, > \, k}$ is Zariski closed in $\M_n^k$.
To prove that $W_{\nullity \, > \, k} \neq (\M_n)^k$, it suffices to show that the complement 
$(\M_n)^k \setminus W_{\nullity \, > \, k}$ has a $K$-point for some field $K$ containing $F$. 
In other words, it suffices to show that there exist matrices $A_1, \ldots, A_k \in \M_n(K)$ such that $\nullity(L(A_1, \dots, A_k)) \leqslant k$.
\end{proof}

Our proof of Theorem~\ref{main_thm} will be based on Lemma~\ref{lem.zariski}(c). Note that it is not a priori clear how to choose
the matrices $A_1, \ldots, A_k$. Informally speaking, if they are chosen to be very general (e.g., 
if their entries are independent variables over $F$),
it becomes difficult to compute $L(A_1, \ldots, A_k)$ explicitly 
enough to determine its nullity. On the other hand, in multiple examples where we chose 
the $k$-tuple $A_1, \ldots, A_k$ in various special positions, the nullity of $L(A_1, \ldots, A_k)$
turned out to be higher than $k$ (and usually $\longrightarrow \infty$ with $n$)\footnote{As an extreme example of this phenomenon,
$L(A_1, \ldots, A_k) = 0$ and hence has nullity $n^2$, if $k > 2$ and $A_1, \ldots, A_k$ are required to commute pairwise.}.
The remainder of this section will be devoted to defining a field $K$ containing $F$ and a $k$-tuple $A_1, \ldots, A_k \in \M_n(K)$ 
that will, in retrospect, turn out to be ``just right": ``special enough" to make $\nullity (L(A_1, \ldots, A_k))$ computable,
yet ``general enough", to ensure that 
\begin{equation} \label{e.nullity}
\nullity ( L(A_1, \ldots, A_k) ) \leqslant k.
\end{equation} 
The special property of $A_1, \ldots, A_k$ that will facilitate subsequent computations is that the $n^2 \times n^2$
matrix $L(A_1,\dots,A_k)$ naturally decomposes as a direct sum of $n\times n$ matrices. On the other hand, the inequality~\eqref{e.nullity} 
will not be obvious at this stage; its proof will take up much of the remainder of this paper. Note also that the $k$-tuple
$A_1, \ldots, A_k$ we will define in this section is really a family of $k$-tuples that depends 
on the integer parameters $s_1, \ldots, s_k$. These integer parameters will remain unspecified until 
Section~\ref{specialization}.

   From now on we will set $K= F(x_{\ell, \alpha})$, where $x_{\ell, \alpha}$ are independent variables, 
as $\ell$ ranges from $1$ to $k$ and $\alpha$ ranges over $\bbZ/ n \bbZ$. For notational convenience, we will
label rows and columns of $n \times n$ matrices by $0, 1, \ldots, n-1$ and view these labels as integers modulo $n$.
Let
\begin{equation} \label{e.D}
D_1 = \diag(x_{1, 0}, x_{1, 1}, \ldots, x_{1, n-1}), \ldots,
D_k = \diag(x_{k, 0}, x_{k, 1}, \ldots, x_{k, n-1})
\end{equation}
be a $k$-tuple of diagonal matrices in $\M_n(K)$.
We will study $L(A_1, \ldots, A_k)$ for % $A_1, \ldots, A_k \in \M_n(K)$ of the form
\begin{equation} \label{e.A_i} 
\text{$A_{1} = D_{1} \cdot C^{s_1}$, $A_{2} = D_{2} \cdot C^{s_2}$, \ldots, $A_{k} = D_{k} \cdot C^{s_k}$} 
\end{equation}
in $\M_n(K)$. Here $C$ denotes the cyclic permutation matrix 
\begin{equation} \label{e.C}  C= \sum_{i \in \mathbb Z/n \mathbb Z} E_{i, i+1} = \left(\begin{array}{ccccc}
0&1&0&\cdots&0\\
0&0&1&\cdots&0\\
\vdots&\vdots&\vdots&\ddots&\vdots\\
0&0&0&\cdots&1\\
1&0&0&\cdots&0
\end{array}\right)\ . \end{equation} 
and the exponents $s_1, \dots, s_k$ are integers, to be specified later. Note that $C^n = I$, where $I$ denotes the $n \times n$ identity matrix.
Moreover, 
\[ \text{$CE_{ii}=E_{i-1, i-1}C$ and $E_{i, i + j} = E_{ii} C^j$ for every $i, j \in \mathbb Z / n \mathbb Z$.} \]
Let $V_j$ be the n-dimensional vector space spanned by the matrices
\begin{equation} \label{e.basis}
E_{0, j} = E_{0, 0} C^j, \; E_{1, j + 1} = E_{1, 1} C^j, \,  \ldots \, , E_{n - 1, n - 1 + j} = E_{n-1, n-1} C^j .
\end{equation}
Equivalently, $V_j$ is the space of matrices all of whose non-zero entries 
are concentrated on the main diagonal, shifted up by $j$ units,
i.e., in positions $(i, i + j)$, where $j$ is fixed and $i$ ranges over $\mathbb Z/ n \mathbb Z$. 

Now observe that $\M_n=V_0\oplus V_1\oplus\cdots\oplus V_{n-1}$. Moreover, every term in
\[L(A_1,\dots,A_k)(E_{i i}C^j)=[D_1C^{s_1},\dots,D_kC^{s_k},E_{ii}C^j]\]
is of the form ${\tilde D}C^{j + s}$ for some diagonal matrix ${\tilde D}$. Here $s = s_1 + \ldots + s_k$.
This matrix lies in $V_{j+s}$, where $j+s$ is viewed modulo $n$. In other words, the linear transformation
$L = L(A_1, \ldots, A_k) \colon \M_n(K) \to \M_n(K)$ naturally decomposes as a direct sum of $n$ linear maps 
$L_j \colon V_j \to V_{j + s}$, where $j$ ranges over $0,1,2,\ldots, n-1$ and $\dim(V_j) = n$ for each $j$.
Consequently, $\Ker(L)$ decomposes as a direct sum
$\Ker(L_0) \oplus \ldots \oplus \Ker(L_{n-1})$. In summary, we have reduced Theorem~\ref{main_thm} to the following.

\begin{proposition}
\label{prop.red2}
Assume that $k = 2r$ is even, $n > r$, the base field $F$ is infinite of characteristic not dividing $2(2r+1)r!$, and
the matrices $A_1, \ldots, A_k$ are as in~\eqref{e.A_i}. Let $L_j \colon V_j \to V_{j + s_1 + \ldots + s_k}$ 
be the restriction of $L(A_1, \ldots, A_k)$ to $V_j$. Then for some choice of the exponents $s_1, \ldots, s_k \in \bbZ$, 
\[ \nullity(L_0)+\dots+\nullity(L_{n-1})\leqslant k. \]
\end{proposition}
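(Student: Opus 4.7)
The plan is to reduce Proposition~\ref{prop.red2} to a statement about matrices of integers (``matrices of initial coefficients'') whose entries are signed counts of Eulerian paths, then to control those counts using the computations in Section~\ref{prelims}.

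First, I fix a lexicographic monomial order on the variables $x_{\ell,\alpha}$ and, for each $j \in \{0,\ldots,n-1\}$, replace $L_j$ by the integer matrix $\Ic(L_j)$ of initial coefficients of its entries. A standard fact about matrices with polynomial entries over a field is that, for any minor, the coefficient of its initial monomial equals the same-shape minor of the leading-coefficient matrix; hence a nonzero minor of $\Ic(L_j)$ forces the corresponding minor of $L_j$ to be nonzero, giving
\[
\nullity(L_j) \leqslant \nullity(\Ic(L_j)).
\]
It therefore suffices to exhibit $s_1,\ldots,s_k$ with $\sum_{j=0}^{n-1} \nullity(\Ic(L_j)) \leqslant k$.

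Second, I make the entries of $L_j$ combinatorial, in the spirit of Swan's proof of the Amitsur--Levitzki theorem. Because each $A_\ell = D_\ell C^{s_\ell}$ has exactly $n$ non-zero entries, one in each position $(\alpha, \alpha+s_\ell)$, expanding
\[
L_j(E_{i,i+j}) \ = \ [D_1 C^{s_1}, \ldots, D_k C^{s_k},\, E_{i,i+j}]
\]
writes the $(i',i'+j+s)$-entry of $L_j$ (with $s = s_1+\cdots+s_k$) as a sum over monomials $\prod_\ell x_{\ell,\alpha_\ell}$ of signed counts of Eulerian paths on a directed graph on vertex set $\bbZ/n\bbZ$ whose edges are determined by the $\alpha_\ell$ and the $s_\ell$. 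Passing to $\Ic(L_j)$ means retaining only the monomials maximal in the fixed order; these correspond to a special family of ``maximal graphs'', to be classified in Sections~\ref{sect.maximal}--\ref{sect.maximal_2}.

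Third, I use the classification (Proposition~\ref{max_graph_decomp} and its supporting lemmas) together with the signed Eulerian path counts of Lemmas~\ref{diag_Eulerian_sum_1}--\ref{Eulerian_sum_2} to evaluate the entries of $\Ic(L_j)$ explicitly. The resulting integers are products of small factors and factorials dividing $2(2r+1)r!$, which is precisely why the hypothesis $\cchar(F)\nmid 2(2r+1)r!$ is imposed: it keeps these entries nonzero in $F$. The case $j=0$ produces structurally different graphs than $j\neq 0$ (loops at a distinguished vertex appear only when $j=0$), so the nullity bound is proved separately in Sections~\ref{section_j_nonzero} and~\ref{section_j_0} and then summed. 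The exponents $s_1,\ldots,s_k$ will be chosen in Section~\ref{specialization} so that the maximal graphs fall into the classified families; the hypothesis $n > r$ ensures there is enough room in $\bbZ/n\bbZ$ for such a choice.

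The main obstacle is the third step: classifying the maximal graphs and then combining the nullity bounds for the individual $\Ic(L_j)$ without losing the overall count $k$. The counting in Section~\ref{prelims} is already in hand, and the reduction via $\nullity(L_j) \leqslant \nullity(\Ic(L_j))$ is formal, so essentially all of the remaining work of the paper is concentrated in this combinatorial classification. Once it is done, Proposition~\ref{prop.red2} follows, and Lemma~\ref{lem.zariski}(c) then yields Theorem~\ref{main_thm}.
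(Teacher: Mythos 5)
Your proposal is correct and follows the paper's own route essentially verbatim: pass to the row-wise matrix of initial coefficients $\Ic(L_j)$ via Lemma~\ref{Ic_det}, interpret the entries of $L_j$ (and hence of $\Ic(L_j)$) as signed Eulerian path counts via Lemma~\ref{matrix_from_graph}, choose $s_i = \pm\lceil i/2\rceil$ as in Section~\ref{specialization}, classify the maximal graphs (Proposition~\ref{max_graph_decomp}), and then bound $\nullity(\Ic(L_j))$ separately for $j\neq 0$ and $j=0$ (Sections~\ref{section_j_nonzero},~\ref{section_j_0}), summing exactly to $k$. One small caveat: the ``standard fact'' you invoke for $\nullity(L_j)\leqslant\nullity(\Ic(L_j))$ only holds with the row-wise definition of $\Ic$ used in the paper, where each entry in row $a$ is the coefficient of the single row-maximal monomial $m_a$, not the individual leading coefficient of each entry; with that definition, the argument of Lemma~\ref{Ic_det} goes through exactly as you describe.
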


The remainder of this paper will be devoted to proving Proposition~\ref{prop.red2}.

\section{A graph-theoretic description of $L_j$}
% {Proof of Theorem~\ref{main_thm}, the overall strategy}
\label{overall_strategy}

Throughout this section we fix positive integers $k$ and $n$ as well as 
$s_1,\dots,s_k$ and $j$ in $\Z/n\Z$. We will continue using the following notations:
$D_1, \ldots, D_k$ will be generic diagonal matrices, as in~\eqref{e.D}, and
\[A_\ell= D_{\ell} C^{s_{\ell}} = \sum_{i=0}^{n-1}x_{\ell, \alpha}E_{\alpha, \alpha+s_\ell}\]
will be as in~\eqref{e.A_i} for $\ell = 1, \ldots, k$. 
As we saw in the previous section, for this choice of $A_1, \ldots, A_k$, 
the linear transformation $L(A_1, \dots, A_k)$ decomposes as $L_0 \oplus L_1 \oplus \ldots \oplus L_{n-1}$.
We will identify the linear transformation $L_j \colon V_j\rightarrow V_{j+s}$, where $s=\sum_{\ell=1}^k s_{\ell}$, 
with the $n \times n$ matrix representing it in the bases~\eqref{e.basis} of $V_j$ and $V_{j+s}$.

In order to find the matrix of $L_j$, we will want to calculate the generalized commutator 
\[ L(A_1, \ldots, A_k)(A_{k+1}) = [A_1,\dots,A_k, A_{k+1}] \]
as $A_{k+1}$ ranges over the basis $E_{\alpha, \alpha + j} = E_{\alpha \alpha} C^j$ of $V_j$. Expanding the generalized commutator $[A_1, \ldots, A_{k+1}]$, 
we see that each entry is a multilinear polynomial in the groups of variables $\{ x_{1, \alpha} \}, \{ x_{2, \alpha} \}, \ldots, \{ x_{k, \alpha} \}$, i.e.,
a linear combination of monomials of the form
$x_{1,\alpha_1}x_{2,\alpha_2}\cdots x_{k,\alpha_{k}}$ with integer coefficients. 
We will now give a graph-theoretic description of the coefficients of these monomials.

\begin{nc} \label{nc.graph}
(a) For the rest of this paper by a graph we will mean a directed graph with $n$ vertices labeled $P_v$ for $v\in \Z/n\Z$ and
at most $k+1$ edges, labeled $e_{\ell}$ with $\ell$ from $\{ 1,2,\dots, k+1 \}$. 
For $\ell = 1, \ldots, k$, the labeled edge $e_{\ell}$ will be of the form
\[ \underset {P_{\alpha_{\ell}}} \bullet \underset {e_{\ell}} \longrightarrow \underset {P_{\alpha_{\ell} +s_{\ell}}} \bullet \]
and the labeled edge $e_{k+1}$ will be of the form
$ \underset {P_{\alpha_{k+1}}} \bullet \underset {e_{k+1}} \longrightarrow \underset {P_{\alpha_{k+1} + j}} \bullet$.
Each edge $e_{\ell}$ will appear in a given graph at most once.

\smallskip
(b) We will say that a graph $\Gamma$ is a disjoint union of $\Gamma'$ and $\Gamma''$ and write 
\[\Gamma = \Gamma' \ \CMamalg\ \Gamma '\]
if the edge set of $\Gamma$ is the disjoint union of the edge sets of $\Gamma'$ and $\Gamma''$. Here by ``disjoint" we mean that $e_{\ell}$ 
cannot be an edge in both $\Gamma'$ and $\Gamma''$ for any $\ell = 1, \ldots, k+1$. Note that the vertices of $\Gamma$, $\Gamma'$ and $\Gamma''$ are
assumed to be $P_v$, $v \in \mathbb Z/ n \mathbb Z$, as in (a).

\smallskip
(c) Let $G$ be a graph with vertex set $\{ P_v \, | \, v \in \Z/ n \Z \}$ whose edge set is a subset of $\{ e_1, \ldots, e_k \}$.
We define the graph $G_b$ to be
\[G_b=G\ \CMamalg\ \underset {P_b} \bullet \underset {e_{k+1}} \longrightarrow \underset {P_{b+j}} \bullet\ . \]
In other words, $G_b$ is the graph obtained from $G$ by adding one extra edge $e_{k+1}$ having 
source vertex $P_b$ and target vertex $P_{b+j}$.
\end{nc}

Let $m$ be a monomial of the form
\[m=x_{1,\alpha_1}x_{2,\alpha_2}\cdots x_{k,\alpha_k}\]
where $\alpha_1,\dots,\alpha_k$ are elements of $\Z/n\Z$. We define $\gr(m)$ to be the graph with $k$ edges 
\begin{equation}\label{gr_m}
\underset {P_{\alpha_1}} \bullet \underset {e_{1}} \longrightarrow \underset {P_{\alpha_1+s_1}} \bullet
\quad
\underset {P_{\alpha_2}} \bullet \underset {e_{2}} \longrightarrow \underset {P_{\alpha_2+s_{2}}} \bullet
\quad\cdots\quad
\underset {P_{\alpha_k}} \bullet \underset {e_{k}} \longrightarrow \underset {P_{\alpha_k + s_{k}}} \bullet\ .
\end{equation}
Conversely, for a graph $G$ with vertices of the form $P_v$, $v \in \bbZ/ n \bbZ$ 
whose edges have labels $e_{1},e_{2},\dots,e_{k}$, we define the monomial $\mon(G)$ to be
\begin{equation}\label{mon_G}
\mon(G)=x_{1,\src_G(e_{1})}x_{2,\src_G(e_{2})}\cdots x_{k,\src_G(e_{k})}\ .
\end{equation}
Note that our definitions of $\mon(G)$ and $\gr(m)$ are inverse to each other: $\mon(\gr(m))=m$ and $\gr(\mon(G))=G$ for any monomial $m$ and graph $G$ of our required form. We will use this correspondence between graphs and monomials to study the entries of $L_j$ in a graph-theoretic manner.
Graphs of the form $\gr(m)$ correspond to monomials $m$ which may appear 
in the matrix $L_j$. The addition of the extra edge $e_{k+1}$ in $\gr(m)_b$ will help us keep track of the coefficient of $m$ 
in the $b^{\rm th}$ column of $L_j$, as is explained in the lemma below.

\begin{lemma}\label{matrix_from_graph}
Fix $\alpha_1,\dots,\alpha_k$ and $b$ to be integers defined modulo $n$. Define $m$ to be the monomial
\[m=x_{1,\alpha_1}x_{2,\alpha_2}\cdots x_{k,\alpha_k}\]
and $G=\gr(m)$.
The coefficient of $m$ appearing in the $(a,b)^{\rm th}$ entry of $L_j$ is
\[\sum_{w\in \uni_{P_a}(G_b)}\sgn(w)\ .\]
\end{lemma}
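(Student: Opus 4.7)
The plan is a direct unwinding of the definition of the generalized commutator, translated into the Eulerian-path language of the graph $G_b$. I would start from
\[
L_j(E_{b, b+j}) = [A_1, \ldots, A_k, E_{b, b+j}],
\]
substitute the expansions $A_\ell = \sum_{\alpha_\ell \in \Z/n\Z} x_{\ell, \alpha_\ell}\, E_{\alpha_\ell, \alpha_\ell + s_\ell}$ for $\ell = 1, \ldots, k$, and invoke multilinearity. Setting $\alpha_{k+1} := b$ and $s_{k+1} := j$ uniformises notation, so that the coefficient of the fixed monomial $m = x_{1,\alpha_1}\cdots x_{k,\alpha_k}$ in $L_j(E_{b, b+j})$ becomes
\[
\sum_{\sigma \in \Sym_{k+1}} \sgn(\sigma)\, E_{\alpha_{\sigma(1)},\, \alpha_{\sigma(1)} + s_{\sigma(1)}} \cdots E_{\alpha_{\sigma(k+1)},\, \alpha_{\sigma(k+1)} + s_{\sigma(k+1)}}.
\]

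Next I would apply the rule $E_{pq} E_{rt} = \delta_{qr}\, E_{pt}$ iteratively to each product. The $\sigma$-term is nonzero exactly when $\alpha_{\sigma(i)} + s_{\sigma(i)} = \alpha_{\sigma(i+1)}$ for $i = 1, \ldots, k$, and in that case the product telescopes to $E_{\alpha_{\sigma(1)},\, \alpha_{\sigma(k+1)} + s_{\sigma(k+1)}}$. Reading the edge $e_\ell$ of $G_b$ as $P_{\alpha_\ell} \to P_{\alpha_\ell + s_\ell}$ for $\ell \leqslant k$, and $e_{k+1}$ as $P_b \to P_{b+j}$, these $k$ matching equations say precisely that the sequence $e_{\sigma(1)}, \ldots, e_{\sigma(k+1)}$ traces an Eulerian path on $G_b$.

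To extract the $(a, b)$-entry of $L_j$, I would then isolate the coefficient of the basis vector $E_{a,\, a + j + s}$ of $V_{j+s}$. This restricts the sum to permutations $\sigma$ whose associated Eulerian path starts at $P_a$. Because the total shift along all $k+1$ edges equals $s_1 + \cdots + s_k + j = s + j$, any such path automatically terminates at $P_{a+s+j}$, so no extra endpoint condition is needed and the extracted sum is exactly
\[
\sum_{w \in \uni_{P_a}(G_b)} \sgn(w),
\]
where $\sgn(w)$ is computed with respect to the canonical labelling $e_1, e_2, \ldots, e_{k+1}$.

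No serious obstacle arises: the derivation is a direct parallel of the Swan argument reviewed in the Introduction, the only additional bookkeeping being that $E_{b, b+j}$ takes the role of the "$(k+1)$-st matrix" and contributes the distinguished edge $e_{k+1}$ that turns $G = \gr(m)$ into $G_b$. As a sanity check, should $\gr(m)$ already contain two edges with the same source and target (i.e., $\alpha_i = \alpha_j$ and $s_i = s_j$ for some $i \neq j$), both sides of the claimed equality vanish: the right-hand side by Lemma~\ref{repeated_edges}, and the left-hand side by the pairing $\sigma \leftrightarrow \sigma \circ (i\; j)$ inside $\Sym_{k+1}$.
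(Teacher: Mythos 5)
Your proposal is correct and follows essentially the same route as the paper's proof: expand the generalized commutator, extract the coefficient of the monomial $m$ via multilinearity, translate the elementary-matrix product rule into the condition that $(e_{\sigma(1)},\ldots,e_{\sigma(k+1)})$ is an Eulerian path on $G_b$, and pick out the $(a,b)$-entry by restricting to paths starting at $P_a$ (the endpoint being determined automatically by the sum of the shifts). The only difference is cosmetic: the paper passes to specialized matrices $B_\ell = x_{\ell,\alpha_\ell}E_{\alpha_\ell,\alpha_\ell+s_\ell}$ by setting the other variables to zero, while you extract the coefficient directly; and your closing repeated-edge sanity check is a pleasant extra that the paper delegates to Lemma~\ref{repeated_edges}.
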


Here, as usual, $\uni_{P_a}(G_b)$ denotes the set of Eulerian paths on $G_b$ originating at $P_a$.

\begin{proof}
Recall that $V_j$ is spanned by the ordered basis $\{E_{0, 0} C^j,E_{1, 1,} C^j,\dots,E_{n-1, n-1} C^j\}$. The matrix $L_j$ is the restriction of $L$ to $V_j$, mapping $V_j$ to $V_{j+s}$ where $s=s_1+s_2+\dots+s_k$. Thus the $(a,b)^{\rm th}$ entry of $L_j$ is the coefficient of $E_{a, a}C^{j+s}$ appearing in 
$L_j(E_{b, b}C^j)=L(E_{b, b}C^j)$. This is the $(a,a+j+s)^{\rm th}$ entry of $L(E_{b, b}C^j)$.

Write $\alpha_{k+1}=b$ and $A_{k+1}=E_{b, b}C^j$. To calculate the coefficients of $x_{1,\alpha_1}\cdots x_{k,\alpha_k}$ appearing in $A_{\sigma(1)}\cdots A_{\sigma(k+1)}$ we set all indeterminants $x_{\ell, \alpha}$, other than $x_{1,\alpha_1},\dots,x_{k,\alpha_k}$, to $0$. Thus the coefficients of $x_{1,\alpha_1}\cdots x_{k,\alpha_k}$ appearing in $A_{\sigma(1)}\cdots A_{\sigma(k+1)}$ are the same as the coefficients of $x_{1,\alpha_1}\cdots x_{k,\alpha_k}$ appearing in $B_{\sigma(1)}\cdots B_{\sigma(k+1)}$, where for $\ell = 1, 2, \ldots, k$,
\[ B_\ell= \begin{pmatrix} 0 & \hdots & 0 & \hdots & 0 \\
\vdots & \ddots & \vdots & \vdots  & \vdots \\
0 & \hdots & x_{\ell, \alpha_\ell} & \hdots & 0 \\
\vdots & \vdots & \vdots & \ddots & \hdots  \\
0 & \hdots & 0 & \hdots & 0 
\end{pmatrix} \cdot C^{s_i} = x_{\ell, \alpha_\ell} E_{\alpha_\ell,\alpha_\ell + s_\ell} \]
and $B_{k+1} = A_{k+1} = E_{b, b + j}$. 
Recall that by the definition of $\gr(m)$, \[ \text{$(\alpha_\ell, \alpha_\ell + s_\ell) = (\src(e_\ell), \tar(e_\ell))$ for $\ell = 1, \ldots, k$, and
$(b, b+j) = (\src(e_{k+1}), \tar(e_{k+1}))$.} \]
As in Swan's paper~\cite{swan}, $L_j(B_{k+1}) = [B_1, \ldots, B_{k+1}]$
can be described by counting Eulerian paths. Using the product rule
\[ E_{p, q}E_{p', q'} =\begin{cases}
E_{p, q'} & \text{if $q = p'$, and}\\
0 & \text{otherwise}\ ,
\end{cases}\]
we see that 
\[ B_{\sigma(1)} \cdot \ldots \cdot B_{\sigma(k+1)} = m E_{\src(e_{\sigma(1)}),\tar(e_{\sigma(1)})}\cdots E_{\src(e_{\sigma(k+1)}),\tar(e_{\sigma(k+1)})} \]
has an $m = x_{1,\alpha_1},\dots,x_{k,\alpha_k}$ in the $(\src(e_{\sigma(1)}),\tar(e_{\sigma(k+1)}))^{\rm th}$ entry, if and only if $\tar(e_{\sigma(\ell)})=\src(e_{\sigma(\ell+1)})$ for each $\ell=1,2,\dots,k$. This is precisely the requirement that $(e_{\sigma(1)},\dots,e_{\sigma(k+1)})$ forms an Eulerian path from $P_{\src(e_{\sigma(1)})}$ to $P_{\tar(e_{\sigma(k+1)})}$ on $G_b$. All other entries of this product are zero.

When $(e_{\sigma(1)},\dots,e_{\sigma(k+1)})$ does form an Eulerian path on $G_b$, this path terminates at $\tar(e_{\sigma(k+1)})=\src(e_{\sigma(1)})+s_1+s_2+\dots+s_k+j=src(e_{\sigma(1)})+j+s$, as $s_1,\dots,s_k$ and $j$ are the differences between the source and target vertices of each edge in the path. Thus the $(a,a+j+s)^{\rm th}$ entry of $B_{\sigma(1)}\cdots B_{\sigma(k+1)}$ is $m = x_{1,\alpha_1}\cdots x_{k,\alpha_k}$ if and only if $\src(e_{\sigma(1)})=P_a$ and $(e_{\sigma(1)},\dots,e_{\sigma(k+1)})$ is an Eulerian path on $G_b$, that is, if and only if $(e_{\sigma(1)},\dots,e_{\sigma(k+1)})$ is an Eulerian path from $P_a$ on $G_b$.

Summing over all permutations $\sigma\in S_{k+1}$ 
we obtain that the coefficient of $x_{1,\alpha_1},\dots,x_{k,\alpha_k}$ appearing in the $(a,a+j+s)^{\rm th}$ entry of 
$L(E_{b, b}C^j)=[A_1,\dots,A_{k+1}]$ is $\sum_{w\in \uni_{P_a}(G_b)}\sgn(w)$, and therefore the $(a,b)^{\rm th}$ entry of $L_j$ is $\sum_{w\in \uni_{P_a}(G_b)}\sgn(w)$.
\end{proof}

With this lemma we can study $L_j$ by considering only graphs of the form $\gr(m)_b$ which admit Eulerian paths. We define a set of graphs which could possibly give rise to a nonzero coefficient in the $a^{\rm th}$ row of $L_j$.

\begin{definition} \label{U_gen_def}
Fix $a,j$ and $s_1,s_2,\dots,s_k$ to be elements of $\Z/n\Z$ and let $I\subseteq\{1,2,\dots,k\}$. Define $U(a,j,I)$ 
to be the set of directed graphs $G$, as in Notational Conventions~\ref{nc.graph}(a), satisfying the following additional conditions.

\smallskip
(i) The edges of $G$ are precisely $e_\ell$ for $\ell\in I$.

\smallskip
(ii) Each edge $e_\ell$ for $\ell\in I$ is of the form $\underset {P_{\alpha_\ell}} \bullet \underset {e_\ell} \longrightarrow \underset {P_{\alpha_\ell+s_\ell}} \bullet$ for some $\alpha_\ell$.

\smallskip
(iii) $G$ has no repeated edges.

\smallskip
(iv) There is some $b \in \Z/n\Z$ such that $G_b$ has an Eulerian path from $P_a$ in $G_b$.

\smallskip
\noindent
We abbreviate
\[U(a,j)=U\big(a,j,\{1,2,\dots,k\}\big) . \]
and
\[U_{nz}(a,j)=\left\{G\in U(a,j)\ |\ \text{There exists $b\in\Z/n\Z$ such that $\sum_{w\in \uni_{P_a}(G_b)}\sgn(w)\neq 0$}\right\}\ .\]

\end{definition}

\begin{remark}
We will be primarily interested in the case where $I = \{ 1, \ldots, k \}$. We allow $I$ to be a proper subset of $\{ 1, 2, \ldots, k \}$ to facilitate induction arguments later on.

In the case where $I = \{ 1, 2, \ldots, k \}$, conditions (i) and (ii) are equivalent to the requirement 
that $G$ is of the form $\gr(m)$ for some monomial $m = x_{1, \alpha_1} x_{2, \alpha_2} \ldots x_{k, \alpha_k}$. The reason for conditions (iii) and (iv) 
is that if they fail, then $\sum_{w\in \uni_{P_a}(G_b)}\sgn(w) = 0$ for each $b \in \bbZ/ n \bbZ$; see Lemma~\ref{repeated_edges}.
Thus by Lemma~\ref{matrix_from_graph} the monomial $m$ never appears in the $a$th row of $L_j$, 
and the graph $G$ does not contribute anything to the $a^{\rm th}$ row of $L_j$. 

For the same reason we are only really interested in graphs from $U_{nz}(a, j)$. 
However, it is not always transparent which graphs lie in $U_{nz}(a, j)$, so as a preliminary step, 
it will be convenient for us to work with all graphs from $U(a, j)$.
\end{remark}

Here is a brief example illustrating Definition~\ref{U_gen_def}.

\begin{example}\label{L_1_eg}
Let $k=2$, $n=3$, $j=1$ and $s_1=1$, $s_2=-1$. For fixed $a$, $U(a,j)$ consists of $4$ graphs, being

\renewcommand{\nodeDist}{1}
\[G^1=
\begin{tikzpicture}[baseline,vertex/.style={anchor=base, circle, fill,minimum size=4pt, inner sep=0pt, outer sep=0pt},auto,
                               edge/.style={->,>=latex, shorten > = 5pt,shorten < = 5pt},
position/.style args={#1:#2 from #3}{
    at=(#3.#1), anchor=#1+180, shift=(#1:#2)
}]
  \node[vertex] (v0) {};
  \node[vertex,position = 0:{\nodeDist} from v0] (v1) {};
  \node[vertex,position = 0:{\nodeDist} from v1] (v2) {};

  \draw (v0) node[below] {$P_a$};
  \draw (v1) node[below] {$P_{a+1}$};
  \draw (v2) node[below] {$P_{a+2}$};

  \draw[edge] (v0) to node[above] {$e_1$} (v1);
%  \draw[edge] (v1) to [bend left=20] node[above] {$e_3$} (v2);
  \draw[edge] (v2) to [bend left=20] node[below] {$e_2$} (v1);
\end{tikzpicture}\quad \text{ , }\quad 
G^2=
\begin{tikzpicture}[baseline,vertex/.style={anchor=base, circle, fill,minimum size=4pt, inner sep=0pt, outer sep=0pt},auto,
                               edge/.style={->,>=latex, shorten > = 5pt,shorten < = 5pt},
position/.style args={#1:#2 from #3}{
    at=(#3.#1), anchor=#1+180, shift=(#1:#2)
}]
  \node[vertex] (v0) {};
  \node[vertex,position = 0:{\nodeDist} from v0] (v1) {};
  \node[vertex,position = 0:{\nodeDist} from v1] (v2) {};

  \draw (v0) node[below] {$P_a$};
  \draw (v1) node[below] {$P_{a+1}$};
  \draw (v2) node[below] {$P_{a+2}$};

%  \draw[edge] (v0) to node[above] {$e_3$} (v1);
  \draw[edge] (v1) to [bend left=20] node[above] {$e_1$} (v2);
  \draw[edge] (v2) to [bend left=20] node[below] {$e_2$} (v1);
\end{tikzpicture},
\]
\[
G^3=
\begin{tikzpicture}[baseline,vertex/.style={circle, fill,minimum size=4pt, inner sep=0pt, outer sep=0pt},auto,
                               edge/.style={->,>=latex, shorten > = 5pt,shorten < = 5pt},
position/.style args={#1:#2 from #3}{
    at=(#3.#1), anchor=#1+180, shift=(#1:#2)
}]
  \node[vertex] (v0) {};
  \node[vertex,position = 0:{\nodeDist} from v0] (v1) {};
  \node[vertex,position = 0:{\nodeDist} from v1] (v2) {};

  \draw (v0) node[below] {$P_{a-1}$};
  \draw (v1) node[below] {$P_{a}$};
  \draw (v2) node[below] {$P_{a+1}$};

  \draw[edge] (v0) to [bend left=20] node[above] {$e_1$} (v1);
  \draw[edge] (v1) to [bend left=20] node[below] {$e_2$} (v0);
%  \draw[edge] (v1) to node[above] {$e_3$} (v2);
\end{tikzpicture}\quad \text{ and  }\quad 
G^4=
\begin{tikzpicture}[baseline,vertex/.style={circle, fill,minimum size=4pt, inner sep=0pt, outer sep=0pt},auto,
                               edge/.style={->,>=latex, shorten > = 5pt,shorten < = 5pt},
position/.style args={#1:#2 from #3}{
    at=(#3.#1), anchor=#1+180, shift=(#1:#2)
}]
  \node[vertex] (v0) {};
  \node[vertex,position = 0:{\nodeDist} from v0] (v1) {};
  \node[vertex,position = 0:{\nodeDist} from v1] (v2) {};

  \draw (v0) node[below] {$P_{a-1}$};
  \draw (v1) node[below] {$P_{a}$};
  \draw (v2) node[below] {$P_{a+1}$};

%  \draw[edge] (v0) to [bend left=20] node[above] {$e_3$} (v1);
  \draw[edge] (v1) to [bend left=20] node[below] {$e_2$} (v0);
  \draw[edge] (v1) to node[above] {$e_1$} (v2);
\end{tikzpicture}.
\]
The only graphs of the form $G_b$ which admit Eulerian paths from $P_a$, for $G\in U(a,j)$, are
\[G^1_{a+1}=
\begin{tikzpicture}[baseline,vertex/.style={anchor=base, circle, fill,minimum size=4pt, inner sep=0pt, outer sep=0pt},auto,
                               edge/.style={->,>=latex, shorten > = 5pt,shorten < = 5pt},
position/.style args={#1:#2 from #3}{
    at=(#3.#1), anchor=#1+180, shift=(#1:#2)
}]
  \node[vertex] (v0) {};
  \node[vertex,position = 0:{\nodeDist} from v0] (v1) {};
  \node[vertex,position = 0:{\nodeDist} from v1] (v2) {};

  \draw (v0) node[below] {$P_a$};
  \draw (v1) node[below] {$P_{a+1}$};
  \draw (v2) node[below] {$P_{a+2}$};

  \draw[edge] (v0) to node[above] {$e_1$} (v1);
  \draw[edge] (v1) to [bend left=20] node[above] {$e_3$} (v2);
  \draw[edge] (v2) to [bend left=20] node[below] {$e_2$} (v1);
\end{tikzpicture}\quad \text{ , }\quad G^2_a=
\begin{tikzpicture}[baseline,vertex/.style={anchor=base, circle, fill,minimum size=4pt, inner sep=0pt, outer sep=0pt},auto,
                               edge/.style={->,>=latex, shorten > = 5pt,shorten < = 5pt},
position/.style args={#1:#2 from #3}{
    at=(#3.#1), anchor=#1+180, shift=(#1:#2)
}]
  \node[vertex] (v0) {};
  \node[vertex,position = 0:{\nodeDist} from v0] (v1) {};
  \node[vertex,position = 0:{\nodeDist} from v1] (v2) {};

  \draw (v0) node[below] {$P_a$};
  \draw (v1) node[below] {$P_{a+1}$};
  \draw (v2) node[below] {$P_{a+2}$};

  \draw[edge] (v0) to node[above] {$e_3$} (v1);
  \draw[edge] (v1) to [bend left=20] node[above] {$e_1$} (v2);
  \draw[edge] (v2) to [bend left=20] node[below] {$e_2$} (v1);
\end{tikzpicture},
\]
\[G^3_a=
\begin{tikzpicture}[baseline,vertex/.style={circle, fill,minimum size=4pt, inner sep=0pt, outer sep=0pt},auto,
                               edge/.style={->,>=latex, shorten > = 5pt,shorten < = 5pt},
position/.style args={#1:#2 from #3}{
    at=(#3.#1), anchor=#1+180, shift=(#1:#2)
}]
  \node[vertex] (v0) {};
  \node[vertex,position = 0:{\nodeDist} from v0] (v1) {};
  \node[vertex,position = 0:{\nodeDist} from v1] (v2) {};

  \draw (v0) node[below] {$P_{a-1}$};
  \draw (v1) node[below] {$P_{a}$};
  \draw (v2) node[below] {$P_{a+1}$};

  \draw[edge] (v0) to [bend left=20] node[above] {$e_1$} (v1);
  \draw[edge] (v1) to [bend left=20] node[below] {$e_2$} (v0);
  \draw[edge] (v1) to node[above] {$e_3$} (v2);
\end{tikzpicture}\quad \text{ and  }\quad G^4_{a-1}
\begin{tikzpicture}[baseline,vertex/.style={circle, fill,minimum size=4pt, inner sep=0pt, outer sep=0pt},auto,
                               edge/.style={->,>=latex, shorten > = 5pt,shorten < = 5pt},
position/.style args={#1:#2 from #3}{
    at=(#3.#1), anchor=#1+180, shift=(#1:#2)
}]
  \node[vertex] (v0) {};
  \node[vertex,position = 0:{\nodeDist} from v0] (v1) {};
  \node[vertex,position = 0:{\nodeDist} from v1] (v2) {};

  \draw (v0) node[below] {$P_{a-1}$};
  \draw (v1) node[below] {$P_{a}$};
  \draw (v2) node[below] {$P_{a+1}$};

  \draw[edge] (v0) to [bend left=20] node[above] {$e_3$} (v1);
  \draw[edge] (v1) to [bend left=20] node[below] {$e_2$} (v0);
  \draw[edge] (v1) to node[above] {$e_1$} (v2);
\end{tikzpicture}.
\]

Each of the above graphs has a unique Eulerian path from $P_a$ and so $U_{nz}(a,j)=U(a,j)$. For fixed $a\in\{0,1,2\}$, by Lemma \ref{matrix_from_graph}, the only monomials appearing in the $a^{\rm th}$ row of $L_1$ are the following.

\begin{enumerate}
\item $x_{1,a}x_{2,a+2}$ appears with coefficient $-1$ in the $(a,a+1)^{\rm th}$ entry, corresponding to path $(e_1,e_3,e_2)$.
\item $x_{1,a+1}x_{2,a+2}$ appears with coefficient $1$ in the $(a,a)^{\rm th}$ entry, corresponding to path $(e_3,e_1,e_2)$.
\item $x_{1,a-1}x_{2,a}$ appears with coefficient $-1$ in the $(a,a)^{\rm th}$ entry, corresponding to path $(e_2,e_1,e_3)$.
\item $x_{1,a}x_{2,a}$ appears with coefficient $1$ in the $(a,a-1)^{\rm th}$ entry, corresponding to path $(e_2,e_3,e_1)$.
\end{enumerate}

We obtain
\[L_1=\left(\begin{array}{ccc}
x_{1,1}x_{2,2}-x_{1,2}x_{2,0} & -x_{1,0}x_{2,2} & x_{1,0}x_{2,0}\\
x_{1,1}x_{2,1} & x_{1,2}x_{2,0}-x_{1,0}x_{2,1} & -x_{1,1}x_{2,0}\\
-x_{1,2}x_{2,1} & x_{1,2}x_{2,2} & x_{1,0}x_{2,1}-x_{1,1}x_{2,2}\\
\end{array}\right)\ .\]
\end{example}

\section{The matrix of initial coefficients}
\label{sect.initial}

In order to prove Proposition~\ref{prop.red2} (and thus Theorem \ref{main_thm}), we need to bound the nullities
of $L_0, L_1, \ldots, L_{n-1}$ from above. We will not be able to work with the matrices $L_j$ directly; their entries 
are too complicated (recall that these entries are polynomials in the variables $x_{\ell, \alpha}$).
Our approach will be to consider the matrices $\Ic(L_j)$ arising from the initial coefficients of $L_j$ 
with respect to a suitably defined lexicographic monomial order on the variables $x_{\ell,\alpha}$. 
The matrix $\Ic(L_j)$ will turn out to be more manageable than $L_j$ and as we shall soon see, its nullity will give us 
an upper bound on the nullity of $L_j$.

\begin{definition}\label{monom_order_def}
Let $R=F[x_1,\dots,x_t]$ be a polynomial ring.

(a) We define a lexicographic order $\succ$ on $R$ to be a total order on monomials from $R$ induced by an order on the variables $x_1,\dots,x_t$.
% If $x_{i_1}\succ x_{i_2}$, then $x_{i_1}m\succ x_{i_2}m$ for any monomial $m\in R$.

(b) Let $f\in R$ and write $f=\sum_{i\in I} c_i m_i$ for nonzero coefficients $c_i\in F$ and distinct monomials $m_i\in R$. If $m$ is the maximal monomial from 
$\{m_i \, | \,  i\in I\}$ with respect to $\succ$, then we define the initial monomial of $f$ to be $\In(f)=m$. 
The leading coefficient of $f$ is defined to be the coefficient of $\In(f)$ in $f$.

(c) We define the matrix $\Ic(M) \in \M_n(F)$ of initial coefficients of an $n \times n$ matrix $M \in \M_n(R)$ as follows. 
For each $a$ let $m_a$ be the largest monomial occurring in the $a^{\rm th}$ row of $M$, that is, 
$m_a=\max(\In(M_{a,0}),\dots,\In(M_{a,n-1}))$. Then the entry $\Ic(M)_{a, b}$ of $\Ic(M)$ 
in position $(a,b)$ is defined to be the coefficient of $m_a$ in $M_{a,b}$.
\end{definition}

\begin{lemma}\label{Ic_det} Let $R$ be a polynomial ring as above, 
and $M$ be an $n \times n$ matrix with coefficients in $R$. Then $\nullity(M) \leqslant \nullity(\Ic(M))$.
\end{lemma}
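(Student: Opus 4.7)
The plan is to prove the equivalent statement $\rank_K(M) \geqslant \rank(\Ic(M))$, where $K = F(x_1,\dots,x_t)$ is the fraction field of $R$, since then $\nullity(M) = n - \rank_K(M) \leqslant n - \rank(\Ic(M)) = \nullity(\Ic(M))$. To do this I would argue via minors: if some $s \times s$ submatrix of $\Ic(M)$ has nonzero determinant in $F$, then the \emph{corresponding} $s \times s$ submatrix of $M$ has nonzero determinant in $R$, hence is invertible over $K$.

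More precisely, suppose $s = \rank(\Ic(M))$ and fix row indices $a_1,\dots,a_s$ and column indices $b_1,\dots,b_s$ so that the submatrix $N = \big(\Ic(M)_{a_i,b_j}\big)_{i,j}$ has $\det(N) \neq 0$. Let $N' = \big(M_{a_i,b_j}\big)_{i,j}$ and, for each $i$, let $m_{a_i}$ denote the maximum monomial appearing in row $a_i$ of $M$, as in Definition~\ref{monom_order_def}(c). I would then compute the coefficient of the product monomial $\prod_{i=1}^s m_{a_i}$ in the polynomial $\det(N') = \sum_{\sigma \in \Sym_s}\sgn(\sigma)\prod_i M_{a_i,b_{\sigma(i)}}$, and show that it is exactly $\det(N)$, which is nonzero by assumption.

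The key step, and the one that makes the whole argument go through, is the following multiplicative property of lex orders: for monomials $m'_i \preceq m_{a_i}$, equality $\prod_i m'_i = \prod_i m_{a_i}$ forces $m'_i = m_{a_i}$ for every $i$. This follows because a lex order is a monomial order, so strict inequality in one factor propagates to strict inequality in the product. Applying this to the expansion
\[\prod_i M_{a_i,b_{\sigma(i)}} = \sum_{(m'_1,\dots,m'_s)} \Big(\prod_i [M_{a_i,b_{\sigma(i)}}]_{m'_i}\Big)\, \prod_i m'_i,\]
where each $m'_i$ ranges over monomials appearing in $M_{a_i,b_{\sigma(i)}}$ (hence satisfies $m'_i \preceq m_{a_i}$), the only tuple that contributes to $\prod_i m_{a_i}$ is $m'_i = m_{a_i}$. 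Thus the coefficient of $\prod_i m_{a_i}$ in $\prod_i M_{a_i,b_{\sigma(i)}}$ equals $\prod_i [M_{a_i,b_{\sigma(i)}}]_{m_{a_i}} = \prod_i \Ic(M)_{a_i,b_{\sigma(i)}}$, and summing over $\sigma$ with signs yields $\det(N)$.

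No real obstacle is expected beyond carefully verifying the monomial-order cancellation property, which is standard but deserves to be written out. I would organize the proof in two short paragraphs: first the reduction to the minor statement, then the coefficient computation, invoking the lex-order compatibility inequality explicitly at the one point where it is used.
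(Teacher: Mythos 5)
Your argument is correct and is essentially the same as the paper's: both pass to an $s\times s$ nonsingular submatrix $N_0=N$ of $\Ic(M)$, take the corresponding submatrix of $M$, and observe that $\det(N_0)$ is the coefficient of the top monomial $\prod_i m_{a_i}$ in the determinant of that submatrix of $M$, hence that determinant is nonzero. The paper dismisses the coefficient computation with a single ``clearly''; you have simply written out the one point it rests on, namely the multiplicativity of the lex order (if $m'_i\preceq m_{a_i}$ with at least one strict, then $\prod_i m'_i\prec\prod_i m_{a_i}$), which is indeed the only nontrivial ingredient.
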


\begin{proof} Suppose $r = \rank(\Ic(M))$. Then there exists a non-singular $r \times r$ submatrix $N_0$ of $\Ic(M)$.
Let us say that $N_0$ is obtained from $\Ic(M)$ by removing rows $a_1, \ldots, a_{n-r}$ and columns $b_1, \dots, b_{n-r}$. 
Let $N$ be the $r \times r$ submatrix of $M$ obtained by removing the same rows $a_1, \ldots, a_{n-r}$ and 
columns $b_1, \dots, b_{n-r}$. Clearly, $\det(N_0)$ is the leading coefficient of $\det(N)$. Hence,
$\det(N) \neq 0$ and consequently, $\rank(M) \geqslant r = \rank(\Ic(M))$. Equivalently, $\nullity(M) \leqslant \nullity(\Ic(M)) = n- r$.
\end{proof}

Recall that a graph $G\in U(a,j)$ determines a monomial $\mon(G)$ by the source vertices of its edges, as in (\ref{mon_G}).
To determine the $a^{\rm th}$ row of $\Ic(L_j)$ we need only consider the maximal graph from $U_{nz}(a,j)$, where we define $G_1\succ G_2$ if $\mon(G_1)\succ \mon(G_2)$. 

\begin{lemma}\label{Ic_entries}
Assume $U_{nz}(a,j)$ is nonempty, and let $G$ be the maximal graph from $U_{nz}(a,j)$. Then the $(a,b)^{\rm th}$ entry of $\Ic(L_j)$ is
\[(\Ic(L_j))_{a,b}=\sum_{w\in \uni_{P_a}(G_b)}\sgn(w)\ .\]
\end{lemma}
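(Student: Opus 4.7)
The plan is to match up, via the bijection $G \leftrightarrow \mon(G)$, the set of monomials appearing with nonzero coefficient in the $a^{\rm th}$ row of $L_j$ with the set $\{\mon(G) : G \in U_{nz}(a,j)\}$, so that the leading monomial $m_a$ of the $a^{\rm th}$ row equals $\mon(G^*)$, where $G^*$ is the maximal element of $U_{nz}(a,j)$. The desired formula for $(\Ic(L_j))_{a,b}$ is then immediate from Lemma~\ref{matrix_from_graph}.

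First I would recall that each entry of $L_j$ is multilinear in the groups of variables $\{x_{\ell,\alpha}\}_\alpha$ for $\ell = 1, \ldots, k$, so every monomial appearing in $(L_j)_{a,b}$ has the form $m = x_{1,\alpha_1} x_{2,\alpha_2} \cdots x_{k,\alpha_k}$ for some $\alpha_1,\ldots,\alpha_k \in \mathbb{Z}/n\mathbb{Z}$. For any such $m$, the graph $\gr(m)$ satisfies conditions (i) and (ii) of Definition~\ref{U_gen_def} by construction. By Lemma~\ref{matrix_from_graph}, the coefficient of $m$ in $(L_j)_{a,b}$ equals $\sum_{w \in \uni_{P_a}(\gr(m)_b)} \sgn(w)$.

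Next I would show that $m$ appears with nonzero coefficient somewhere in the $a^{\rm th}$ row of $L_j$ if and only if $\gr(m) \in U_{nz}(a,j)$. Indeed, if $\gr(m)$ has a repeated edge, then by Lemma~\ref{repeated_edges} the sum $\sum_{w \in \uni_{P_a}(\gr(m)_b)} \sgn(w)$ vanishes for every $b$, so $m$ does not appear in the $a^{\rm th}$ row; if no $\gr(m)_b$ admits an Eulerian path starting at $P_a$, then the set $\uni_{P_a}(\gr(m)_b)$ is empty for every $b$ and the conclusion is again trivial. Conversely, if $\gr(m) \in U(a,j)$, then the nonvanishing of the coefficient in some column $b$ is by definition the condition for membership in $U_{nz}(a,j)$. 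Hence the set of monomials that appear with nonzero coefficient in the $a^{\rm th}$ row of $L_j$ is exactly $\{\mon(G) : G \in U_{nz}(a,j)\}$.

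Because $\mon$ is a bijection between graphs satisfying (i) and (ii) and monomials of the stated form, and the ordering on graphs in $U_{nz}(a,j)$ was defined precisely by $G_1 \succ G_2 \iff \mon(G_1) \succ \mon(G_2)$, the leading monomial of the $a^{\rm th}$ row satisfies $m_a = \mon(G^*)$, where $G^*$ is the maximal element of $U_{nz}(a,j)$ guaranteed to exist by the hypothesis that $U_{nz}(a,j)$ is nonempty. Applying Lemma~\ref{matrix_from_graph} once more, the coefficient of $m_a = \mon(G^*)$ in $(L_j)_{a,b}$ is $\sum_{w \in \uni_{P_a}(G^*_b)} \sgn(w)$, which by Definition~\ref{monom_order_def}(c) is exactly $(\Ic(L_j))_{a,b}$. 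The only mildly delicate point is the observation that, although the leading coefficient in a fixed column $b$ might be zero, this is consistent with the definition of $\Ic(L_j)$ and causes no issue; the proof is otherwise a direct bookkeeping argument chaining together Lemma~\ref{matrix_from_graph}, Lemma~\ref{repeated_edges}, and the definitions.
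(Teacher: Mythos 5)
Your proof is correct and takes essentially the same approach as the paper: both identify the maximal monomial $m_a$ appearing with nonzero coefficient in row $a$ of $L_j$ with $\mon(G^*)$ for $G^*$ maximal in $U_{nz}(a,j)$, then invoke Lemma~\ref{matrix_from_graph} to read off the coefficient. You simply make explicit the bijection argument (via Lemma~\ref{repeated_edges} and Definition~\ref{U_gen_def}) that the paper leaves implicit in the sentence ``Then $\gr(m)$ is the maximal graph in $U_{nz}(a,j)$.''
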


\begin{proof}
Let $m$ be the maximal monomial appearing in the $a^{\rm th}$ row of $L_j$ with nonzero coefficient. Then $\gr(m)$ is the maximal graph in $U_{nz}(a,j)$. Thus $\mon(G)=m$, and the $(a, b)^{\rm th}$-entry of $\Ic(L_j)$ is the coefficient of $\mon(G)$ appearing in the $(a, b)^{\rm th}$-entry of $L_j$. 
By Lemma \ref{matrix_from_graph}, this coefficient is
\[(\Ic(L_j))_{a,b}=\sum_{w\in \uni_{P_a}(G_b)}\sgn(w)\ .\]
\end{proof}

\begin{example}\label{Ic_eg}
Let $k=2$, $n=3$, $j=1$ and $s_1=1$, $s_2=-1$, as in Example \ref{L_1_eg}. Define an order on graphs from $U(a,j)$ to be the lexicographic order induced by the order on edges

\renewcommand{\nodeDist}{1}
\[
\begin{tikzpicture}[baseline,vertex/.style={anchor=base, circle, fill,minimum size=4pt, inner sep=0pt, outer sep=0pt},auto,
                               edge/.style={->,>=latex, shorten > = 5pt,shorten < = 5pt},
position/.style args={#1:#2 from #3}{
    at=(#3.#1), anchor=#1+180, shift=(#1:#2)
}]
  \node[vertex] (v0) at (-0.9,0.1) {};
  \node[vertex] (v1) at (0,0) {};

  \draw (v0) node[below] {$P_{2}$};
  \draw (v1) node[below] {$P_0$};

  \draw[edge] (v0) to node[below] (e) {$e_1$} (v1);
  \node [below=0.4cm, align=flush center] at (e) {$x_{1,2}$};
\end{tikzpicture}\enskip \succ \enskip 
\begin{tikzpicture}[baseline,vertex/.style={anchor=base, circle, fill,minimum size=4pt, inner sep=0pt, outer sep=0pt},auto,
                               edge/.style={->,>=latex, shorten > = 5pt,shorten < = 5pt},
position/.style args={#1:#2 from #3}{
    at=(#3.#1), anchor=#1+180, shift=(#1:#2)
}]
  \node[vertex] (v0) at (-0.9,0.1) {};
  \node[vertex] (v1) at (0,0) {};

  \draw (v0) node[below] {$P_{2}$};
  \draw (v1) node[below] {$P_0$};

  \draw[edge] (v1) to node[below] (e) {$e_2$} (v0);
  \node [below=0.4cm, align=flush center] at (e) {$x_{2,0}$};
\end{tikzpicture}\enskip \succ \enskip 
\begin{tikzpicture}[baseline,vertex/.style={anchor=base, circle, fill,minimum size=4pt, inner sep=0pt, outer sep=0pt},auto,
                               edge/.style={->,>=latex, shorten > = 5pt,shorten < = 5pt},
position/.style args={#1:#2 from #3}{
    at=(#3.#1), anchor=#1+180, shift=(#1:#2)
}]
  \node[vertex] (v0) at (0,0) {};
  \node[vertex] (v1) at (0.9,0.1) {};

  \draw (v0) node[below] {$P_0$};
  \draw (v1) node[below] {$P_1$};

  \draw[edge] (v0) to node[below] (e) {$e_1$} (v1);
  \node [below=0.4cm, align=flush center] at (e) {$x_{1,0}$};
\end{tikzpicture}\enskip \succ \enskip 
\begin{tikzpicture}[baseline,vertex/.style={anchor=base, circle, fill,minimum size=4pt, inner sep=0pt, outer sep=0pt},auto,
                               edge/.style={->,>=latex, shorten > = 5pt,shorten < = 5pt},
position/.style args={#1:#2 from #3}{
    at=(#3.#1), anchor=#1+180, shift=(#1:#2)
}]
  \node[vertex] (v0) at (0,0) {};
  \node[vertex] (v1) at (0.9,0.1) {};

  \draw (v0) node[below] {$P_0$};
  \draw (v1) node[below] {$P_1$};

  \draw[edge] (v1) to node[below] (e) {$e_2$} (v0);
  \node [below=0.4cm, align=flush center] at (e) {$x_{2,1}$};
\end{tikzpicture}\enskip \succ \enskip 
\begin{tikzpicture}[baseline,vertex/.style={anchor=base, circle, fill,minimum size=4pt, inner sep=0pt, outer sep=0pt},auto,
                               edge/.style={->,>=latex, shorten > = 5pt,shorten < = 5pt},
position/.style args={#1:#2 from #3}{
    at=(#3.#1), anchor=#1+180, shift=(#1:#2)
}]
  \node[vertex] (v0) at (-0.5,0.1) {};
  \node[vertex] (v1) at (0.5,0.1) {};

  \draw (v0) node[below] {$P_{2}$};
  \draw (v1) node[below] {$P_1$};

  \draw[edge] (v1) to node[below] (e) {$e_1$} (v0);
  \node [below=0.4cm, align=flush center] at (e) {$x_{1,1}$};
\end{tikzpicture}\enskip \succ \enskip 
\begin{tikzpicture}[baseline,vertex/.style={anchor=base, circle, fill,minimum size=4pt, inner sep=0pt, outer sep=0pt},auto,
                               edge/.style={->,>=latex, shorten > = 5pt,shorten < = 5pt},
position/.style args={#1:#2 from #3}{
    at=(#3.#1), anchor=#1+180, shift=(#1:#2)
}]
  \node[vertex] (v0) at (-0.5,0.1) {};
  \node[vertex] (v1) at (0.5,0.1) {};

  \draw (v0) node[below] {$P_{2}$};
  \draw (v1) node[below] {$P_1$};

  \draw[edge] (v0) to node[below] (e) {$e_2$} (v1);
  \node [below=0.4cm, align=flush center] at (e) {$x_{2,2}$};
\end{tikzpicture}\ .
\]
The correspondence between variables $x_{\ell,\alpha}$ and pairs $(e_\ell,P_\alpha)$ of labeled edges with source vertices determines a monomial order. The graph order defined here corresponds to the lexicographic monomial order on the polynomial ring
$ F[x_{1,0},x_{1,1},x_{1,2},x_{2,0},x_{2,1},x_{2,2}]$
induced by $x_{1,2}\succ x_{2,0}\succ x_{1,0}\succ x_{2,1}\succ x_{1,1}\succ x_{2,2}$. Our maximal graph from $U(a,1)$ is then
\[G=
\begin{tikzpicture}[baseline,vertex/.style={anchor=base, circle, fill,minimum size=4pt, inner sep=0pt, outer sep=0pt},auto,
                               edge/.style={->,>=latex, shorten > = 5pt,shorten < = 5pt},
position/.style args={#1:#2 from #3}{
    at=(#3.#1), anchor=#1+180, shift=(#1:#2)
}]
  \node[vertex] (vL) at (-1.5,0.2) {};
  \node[vertex] (v0) at (0,0) {};
  \node[vertex] (vR) at (1.5,0.2) {};

  \draw (vL) node[below] {$P_{2}$};
  \draw (v0) node[below] {$P_0$};
  \draw (vR) node[below] {$P_1$};

  \draw[edge] (vL) to [bend left=20] node[above] {$e_1$} (v0);
  \draw[edge] (v0) to [bend left=20] node[below] {$e_2$} (vL);
%  \draw[edge] (v0) to node[below] {$e_3$} (vR);
\end{tikzpicture}
\]
for any choice of $a=0,1,2$. This graph $G$ does lie in $U_{nz}(a,1)$ when $a=0,1$, however when $a=2$, the only graph of the form $G_b$ admitting Eulerian paths is $G_2$. But $G_2$ has a repeated edge from $P_2$ to $P_0$. By Lemma \ref{repeated_edges} we have $\sum_{w\in \uni_{P_2}(G_2)}\sgn(w)=0$ and hence $G\notin U_{nz}(2,1)$. We consider the next largest graph from $U(2,1)$ and see that the maximal graphs from $U_{nz}(a,1)$ are
\[
\begin{tikzpicture}[baseline,vertex/.style={anchor=base, circle, fill,minimum size=4pt, inner sep=0pt, outer sep=0pt},auto,
                               edge/.style={->,>=latex, shorten > = 5pt,shorten < = 5pt},
position/.style args={#1:#2 from #3}{
    at=(#3.#1), anchor=#1+180, shift=(#1:#2)
}]
  \node[vertex] (vL) at (-1.5,0.2) {};
  \node[vertex] (v0) at (0,0) {};
  \node[vertex] (vR) at (1.5,0.2) {};

  \draw (vL) node[below] {$P_{2}$};
  \draw (v0) node[below] {$P_0$};
  \draw (vR) node[below] {$P_1$};

  \draw[edge] (vL) to [bend left=20] node[above] {$e_1$} (v0);
  \draw[edge] (v0) to [bend left=20] node[below] {$e_2$} (vL);
%  \draw[edge] (v0) to node[below] {$e_3$} (vR);

  \node [below=0.6cm, align=flush center] at (v0) {$a=0$};
\end{tikzpicture}\quad,\quad
\begin{tikzpicture}[baseline,vertex/.style={anchor=base, circle, fill,minimum size=4pt, inner sep=0pt, outer sep=0pt},auto,
                               edge/.style={->,>=latex, shorten > = 5pt,shorten < = 5pt},
position/.style args={#1:#2 from #3}{
    at=(#3.#1), anchor=#1+180, shift=(#1:#2)
}]
  \node[vertex] (vL) at (-1.5,0.2) {};
  \node[vertex] (v0) at (0,-0.1) {};
  \node[vertex] (vR) at (1.5,0.2) {};

  \draw (vL) node[below] {$P_{2}$};
  \draw (v0) node[below] {$P_0$};
  \draw (vR) node[below] {$P_1$};

  \draw[edge] (vL) to [bend left=20] node[above] {$e_1$} (v0);
  \draw[edge] (v0) to [bend left=20] node[below] {$e_2$} (vL);
%  \draw[edge] (vR) to [bend right=25] node[above] {$e_3$} (vL);

  \node [below=0.6cm, align=flush center] at (v0) {$a=1$};
\end{tikzpicture}\quad\text{and}\quad
\begin{tikzpicture}[baseline,vertex/.style={anchor=base, circle, fill,minimum size=4pt, inner sep=0pt, outer sep=0pt},auto,
                               edge/.style={->,>=latex, shorten > = 5pt,shorten < = 5pt},
position/.style args={#1:#2 from #3}{
    at=(#3.#1), anchor=#1+180, shift=(#1:#2)
}]
  \node[vertex] (vL) at (-1.5,0.2) {};
  \node[vertex] (v0) at (0,0) {};
  \node[vertex] (vR) at (1.5,0.2) {};

  \draw (vL) node[below] {$P_{2}$};
  \draw (v0) node[below] {$P_0$};
  \draw (vR) node[below] {$P_1$};

  \draw[edge] (vL) to node[below] {$e_1$} (v0);
%  \draw[edge] (v0) to [bend left=20] node[above] {$e_3$} (vR);
  \draw[edge] (vR) to node[below] {$e_2$} (v0);

  \node [below=0.6cm, align=flush center] at (v0) {$a=2$};
\end{tikzpicture}\ .
\]
For each of the above $3$ graphs, there is precisely one placement of the edge $e_3$ such that the graph $G_b=G\ \CMamalg\ \underset {P_b} \bullet \underset {e_3} \rightarrow \underset {P_{b+1}} \bullet$ admits an Eulerian path from $P_a$. These are
\[
\begin{tikzpicture}[baseline,vertex/.style={anchor=base, circle, fill,minimum size=4pt, inner sep=0pt, outer sep=0pt},auto,
                               edge/.style={->,>=latex, shorten > = 5pt,shorten < = 5pt},
position/.style args={#1:#2 from #3}{
    at=(#3.#1), anchor=#1+180, shift=(#1:#2)
}]
  \node[vertex] (vL) at (-1.5,0.2) {};
  \node[vertex] (v0) at (0,0) {};
  \node[vertex] (vR) at (1.5,0.2) {};

  \draw (vL) node[below] {$P_{2}$};
  \draw (v0) node[below] {$P_0$};
  \draw (vR) node[below] {$P_1$};

  \draw[edge] (vL) to [bend left=20] node[above] {$e_1$} (v0);
  \draw[edge] (v0) to [bend left=20] node[below] {$e_2$} (vL);
  \draw[edge] (v0) to node[below] {$e_3$} (vR);

  \node [below=0.6cm, align=flush center] at (v0) {$a=0, b=0$};
\end{tikzpicture}\quad,\quad
\begin{tikzpicture}[baseline,vertex/.style={anchor=base, circle, fill,minimum size=4pt, inner sep=0pt, outer sep=0pt},auto,
                               edge/.style={->,>=latex, shorten > = 5pt,shorten < = 5pt},
position/.style args={#1:#2 from #3}{
    at=(#3.#1), anchor=#1+180, shift=(#1:#2)
}]
  \node[vertex] (vL) at (-1.5,0.2) {};
  \node[vertex] (v0) at (0,-0.1) {};
  \node[vertex] (vR) at (1.5,0.2) {};

  \draw (vL) node[below] {$P_{2}$};
  \draw (v0) node[below] {$P_0$};
  \draw (vR) node[below] {$P_1$};

  \draw[edge] (vL) to [bend left=20] node[above = 2, right = 1] {$e_1$} (v0);
  \draw[edge] (v0) to [bend left=20] node[below] {$e_2$} (vL);
  \draw[edge] (vR) to [bend right=25] node[above] {$e_3$} (vL);

  \node [below=0.6cm, align=flush center] at (v0) {$a=1, b=1$};
\end{tikzpicture}\quad\text{and}\quad
\begin{tikzpicture}[baseline,vertex/.style={anchor=base, circle, fill,minimum size=4pt, inner sep=0pt, outer sep=0pt},auto,
                               edge/.style={->,>=latex, shorten > = 5pt,shorten < = 5pt},
position/.style args={#1:#2 from #3}{
    at=(#3.#1), anchor=#1+180, shift=(#1:#2)
}]
  \node[vertex] (vL) at (-1.5,0.2) {};
  \node[vertex] (v0) at (0,0) {};
  \node[vertex] (vR) at (1.5,0.2) {};

  \draw (vL) node[below] {$P_{2}$};
  \draw (v0) node[below] {$P_0$};
  \draw (vR) node[below] {$P_1$};

  \draw[edge] (vL) to node[below] {$e_1$} (v0);
  \draw[edge] (v0) to [bend left=20] node[above] {$e_3$} (vR);
  \draw[edge] (vR) to [bend left=20] node[below] {$e_2$} (v0);

  \node [below=0.6cm, align=flush center] at (v0) {$a=2, b=0$};
\end{tikzpicture}\ .
\]
By Lemma \ref{Ic_entries} we obtain
\[\Ic(L_1)=\left(\begin{array}{ccc}
-1&0&0\\
0&1&0\\
-1&0&0
\end{array}\right)\]
from which we see that $\nullity(\Ic(L_1))=1$ and hence $\nullity(L_1)\leqslant 1$ by Lemma \ref{Ic_det}.

In our description of $L_1$ in Example \ref{L_1_eg}, the largest monomials in each row are $x_{1,2}x_{2,0}$, $x_{1,2}x_{2,0}$ and $x_{1,2}x_{2,1}$ respectively. Lemma \ref{Ic_entries} allows us to extract the coefficients of these monomials by considering the associated order on graphs from $U(a,j)$.
\end{example}

\section{Specialization of the lexicographic order and the exponents $s_i$}\label{specialization}

 From now on we will assume that $k = 2r$ is even and $1 \leqslant r \leqslant n-1$.
We will now choose our exponents $s_1,\dots,s_k$. These exponents will be fixed for 
the remainder of the proof of Theorem~\ref{main_thm}. 
The matrices $A_\ell=D_\ell C^{s_\ell}$, $\ell = 1, \ldots, k$ and the linear transformations 
$L(A_1,\dots,A_k)$ and $L_j$, $j = 0, 1, \ldots, n-1$ defined in Section~\ref{first_reductions}, will also be fixed.
We define
\begin{align*}
s_i&=\lceil \frac i 2 \rceil, \text{ for $i=1,2,\dots,r$, and}\\
s_{r+i}&=s_{-i}=-s_i=-\lceil \frac i 2 \rceil, \text{ for $i=1,2,\dots,r$}\ .
\end{align*}
In other words,
\begin{align*}
(s_1,s_2,s_3,\dots,s_k)&=(s_1,s_2,s_3,\dots,s_r,s_{-1},s_{-2},\dots,s_{-r})\\
&=\left(\lceil \frac {1} {2} \rceil,\lceil \frac {2} {2} \rceil,\lceil \frac {3} {2} \rceil,\dots,\lceil \frac {r} {2} \rceil,-\lceil \frac {1} {2} \rceil,-\lceil \frac {2} {2} \rceil,\dots,-\lceil \frac {r} {2} \rceil\right)\\
&=(1,1,2,\dots,\lceil\frac r 2 \rceil,-1,-1,\dots,-\lceil \frac r 2 \rceil)
\end{align*}
and our matrices $A_1, \ldots, A_k$ specialize to
\begin{gather*} A_1=D_1C,\quad A_2=D_2C, \quad A_3=D_3C^2, \quad \ldots \quad , \quad  
A_r=D_rC^{\lceil \frac r 2\rceil}, \\
A_{r+1}=D_{r+1}C^{-1}, \quad  A_{r+2} = D_{r + 2} C^{-1}, \quad \quad \dots \quad ,   \quad A_k=D_kC^{-\lceil \frac r 2 \rceil}\ .\end{gather*}

\begin{nc} In the sequel
$[a\interval b]$ will denote an `interval' in $\Z/n\Z$, i.e., the set of successive integers $a,a+1,\dots$ in $\Z/n\Z$ up to the first integer congruent to $b$ modulo $n$.
\end{nc}

\begin{definition}\label{graph_order_def}
For $v\in \Z/n\Z$ we let $|v|$ denote the absolute value of the representative of $v$ 
in $[-\lceil \frac{n-1}{2} \rceil\interval \lfloor \frac{n-1}{2} \rfloor]$. For two vertices $P_{v_1}, P_{v_2}$, $v_1,v_2\in \Z/n\Z$ we define
\[
P_{v_1}\succ P_{v_2}, \text{ if $|v_1|<|v_2|$, or if $v_1=-v_2$ and $v_2\in[1\interval \lfloor \frac{n-1}{2} \rfloor]$.}
\]
Our order on vertices becomes
\begin{equation}\label{vertex_order}
P_0\succ P_{-1}\succ P_1\succ P_{-2}\succ P_2\succ\dots\ .
\end{equation}

This defines a lexicographic order on unordered pairs of vertices. If $\max(P_{v_1},P_{v_2})\succ \max(P_{v_1'},P_{v_2'})$, or $\max(P_{v_1},P_{v_2})= \max(P_{v_1'},P_{v_2'})$ and $\min(P_{v_1},P_{v_2})\succ \min(P_{v_1'},P_{v_2'})$, then we define $(P_{v_1},P_{v_2})\succ (P_{v_1'},P_{v_2'})$.

Next we define an order on pairs of labeled edges with their source vertices. For $\ell_1,\ell_2\in\{1,2,\dots,k\}$ and $v_1,v_2\in \Z/n\Z$ we define
\begin{equation}\label{edge_order}
\begin{split}
(e_{\ell_1},P_{v_1})\succ (e_{\ell_2},P_{v_2}),\quad &\text{ if }\ (P_{v_1}, P_{v_1+s_{\ell_1}})\succ (P_{v_2}, P_{v_2+s_{\ell_2}})\ ,\\
(e_{\ell_1},P_{v_1})\succ (e_{\ell_2},P_{v_2}),\quad  &\text{ if }\  (P_{v_1}, P_{v_1+s_{\ell_1}})= (P_{v_2}, P_{v_2+s_{\ell_2}})\ , \text{ and } \ell_1<\ell_2.
\end{split}
\end{equation}
The equality in the second line of this definition is of unordered pairs. Note that for $n>r$, the equality $(P_{v_1}, P_{v_1+s_{\ell_1}})= (P_{v_2}, P_{v_2+s_{\ell_2}})$ can only hold if $s_{\ell_1}= \pm s_{\ell_2}$.

%For each $i=\pm 1,\dots,\pm r$ we define the minimal and maximal vertices of the edge $e_i$ in a graph $G$ as $\min_G(e_i)= \min(src_G(e_i),tar_G(e_i))$ and $\max_G(e_i)= \max(src_G(e_i),tar_G(e_i))$ respectively, with respect to our order on vertices.

%For two edges $e_{i_1},e_{i_2}$, we define $e_{i_1}\succ e_{i_2}$ if $\max(e_{i_1})\succ \max(e_{i_2})$. If $\max(e_{i_1})=\max(e_{i_2})$ we define $e_{i_1}\succ e_{i_2}$ if $\min(e_{i_2})\succ \min(e_{i_2})$. If also $\min(e_{i_1})=\min(e_{i_2})$, then there exists $\alpha$ such that $e_{i_1},e_{i_2}\in \{f_{\alpha},f_{\alpha}',f_{-\alpha},f_{-\alpha}'\}$. We break ties here by declaring $f_{\alpha}\succ f_{\alpha}'\succ f_{-\alpha}\succ f_{-\alpha}'$, when $\alpha\in\{1,\dots,\lceil r/2 \rceil\}$.

This order on pairs of labeled edges and vertices determines an order on graphs from $U(a,j,I)$. A graph $G\in U(a,j,I)$ is determined by the pairs $(e_\ell,\src_G(e_\ell))$ for $\ell\in I$. Our order on $U(a,j,I)$ is the lexicographic order induced by the order on these pairs in (\ref{edge_order}).
%This order on pairs of edges and vertices determines a lexicographic order on $F[x_{i,v}]$, by $x_{i_1,v_1}\succ x_{i_2,v_2}$ if $(e_{i_1},P_{v_1})\succ (e_{i_2,P_{v_2})$. For two graphs $G_1,G_2\in U(a,j)$ we define $G_1\succ G_2$ if m(G_1)
%For two graphs $G_1$ and $G_2$ in $U(a,j)$, we determine whether $G_1\succ G_2$ by lexicographically comparing maximal edges. If $e_{i_1}$ is the maximal edge in $G_1$ and $e_{i_2}$ is the maximal edge in $G_2$, we say $G_1\succ G_2$ if $e_{i_1}\succ e_{i_2}$. If $e_{i_1}=e_{i_2}$, then $G_1\succ G_2$ if $(G_1\setminus \{e_{i_1}\})\succ (G_2\setminus \{e_{i_2}\})$.
\end{definition}

In a graph of the form $G=G(m)$, an edge $e_\ell$ will satisfy $\tar_G(e_\ell)=\src_G(e_\ell)+s_\ell$. Thus when comparing edges $e\in G$ and $e'\in G'$, the order given in (\ref{edge_order}) first compares the unordered pairs $(\src_G(e),\tar_G(e))$ and $(\src_{G'}(e'),\tar_{G'}(e'))$, with ties broken by comparing edge labels.

Our global pictures of the $n$ vertices used in the graphs of $U(a,j)$, when $n$ is even and odd respectively, are
\begin{equation}\label{global_pic}
\renewcommand{\nodeDist}{2}
\begin{tikzpicture}[baseline,vertex/.style={anchor=base, circle, fill,minimum size=4pt, inner sep=0pt, outer sep=0pt},auto,
                               edge/.style={->,>=latex, shorten > = 5pt,shorten < = 5pt},
position/.style args={#1:#2 from #3}{
    at=(#3.#1), anchor=#1+180, shift=(#1-90:#2)
}]

  \coordinate (vc) {};
  \node[vertex,position = 0:{\nodeDist} from vc] (v0) {};
  \node[vertex,position = -20:{\nodeDist} from vc] (v1L) {};
  \node[vertex,position = -45:{\nodeDist} from vc] (v2L) {};
  \node[vertex,position = -70:{\nodeDist} from vc] (v3L) {};
  \node[vertex,position = 30:{\nodeDist} from vc] (v1R) {};
  \node[vertex,position = 55:{\nodeDist} from vc] (v2R) {};

  \node[vertex,position = -178:{\nodeDist} from vc] (vn1L) {};
  \node[vertex,position = -148:{\nodeDist} from vc] (vn2L) {};
  \node[vertex,position = 154:{\nodeDist} from vc] (vn1R) {};

  \draw (v0) node[below] {\scriptsize $P_{0}$};
  \draw (v1L) node[below] {\scriptsize $P_{-1}$};
  \draw (v2L) node[below] {\scriptsize $P_{-2}$};
  \draw (v3L) node[below] {\scriptsize $P_{-3}$};
%  \draw (v4L) node[below] {\scriptsize $P_{-4}$};
  \draw (v1R) node[below] {\scriptsize $P_{1}$};
  \draw (v2R) node[below] {\scriptsize $P_{2}$};
%  \draw (v3R) node[below] {\scriptsize $P_{3}$};

  \draw (vn1L) node[above] {\scriptsize $P_{-\frac n 2}$};
  \draw (vn2L) node[above] {\scriptsize $P_{1-\frac n 2}$};
  \draw (vn1R) node[above] {\scriptsize $P_{\frac n 2 - 1}$};

  \draw[dashed, shorten >=10pt, shorten < = 10 pt] (v3L) to[bend left = 39] (vn2L);
  \draw[dashed, shorten >=10pt, shorten < = 10 pt] (v2R) to[bend right=52] (vn1R);
\end{tikzpicture}\quad\text{ and }\quad
\begin{tikzpicture}[baseline,vertex/.style={anchor=base, circle, fill,minimum size=4pt, inner sep=0pt, outer sep=0pt},auto,
                               edge/.style={->,>=latex, shorten > = 5pt,shorten < = 5pt},
position/.style args={#1:#2 from #3}{
    at=(#3.#1), anchor=#1+180, shift=(#1-90:#2)
}]

  \coordinate (vc) {};
  \node[vertex,position = 0:{\nodeDist} from vc] (v0) {};
  \node[vertex,position = -20:{\nodeDist} from vc] (v1L) {};
  \node[vertex,position = -45:{\nodeDist} from vc] (v2L) {};
  \node[vertex,position = -70:{\nodeDist} from vc] (v3L) {};
%  \node[vertex,position = -59:{\nodeDist} from vc] (v4L) {};
  \node[vertex,position = 30:{\nodeDist} from vc] (v1R) {};
  \node[vertex,position = 55:{\nodeDist} from vc] (v2R) {};
%  \node[vertex,position = 45:{\nodeDist} from vc] (v3R) {};

  \node[vertex,position = -161:{\nodeDist} from vc] (vn1L) {};
  \node[vertex,position = -131:{\nodeDist} from vc] (vn2L) {};
  \node[vertex,position = 170:{\nodeDist} from vc] (vn1R) {};
  \node[vertex,position = 142:{\nodeDist} from vc] (vn2R) {};

  \draw (v0) node[below] {\scriptsize $P_{0}$};
  \draw (v1L) node[below] {\scriptsize $P_{-1}$};
  \draw (v2L) node[below] {\scriptsize $P_{-2}$};
  \draw (v3L) node[below] {\scriptsize $P_{-3}$};
%  \draw (v4L) node[below] {\scriptsize $P_{-4}$};
  \draw (v1R) node[below] {\scriptsize $P_{1}$};
  \draw (v2R) node[below] {\scriptsize $P_{2}$};
%  \draw (v3R) node[below] {\scriptsize $P_{3}$};

  \draw (vn1L) node[above] {\scriptsize $P_{-\frac {n-1} 2}$};
  \draw (vn2L) node[above] {\scriptsize $P_{1-\frac {n-1} 2}$};
  \draw (vn1R) node[above] {\scriptsize $P_{\frac {n-1} 2}$};
  \draw (vn2R) node[above] {\scriptsize $P_{\frac {n-1} 2 - 1}$};

  \draw[dashed, shorten >=10pt, shorten < = 10 pt] (v3L) to[bend left = 30] (vn2L);
  \draw[dashed, shorten >=10pt, shorten < = 10 pt] (v2R) to[bend right=46] (vn2R);
\end{tikzpicture}\enskip .
\end{equation}

In these pictures, the lower vertices have larger weight, corresponding to our order on vertices $P_0\succ P_{-1}\succ P_1\succ P_{-2}\succ\dots$. The largest edges will be those incident on $P_0$ and, more generally, the lower an edge appears in the above picture, the larger weight it has.

The graphs $G\in U(a,j)$ are defined so that there exists an Eulerian path from $P_a$ on $G_b$ for some $b$. Conversely, a graph in $U(a,j)$ can be defined by a path $(e_{\sigma(1)},e_{\sigma(2)},\dots,e_{\sigma(k+1)})$ on the $n$ vertices. In the largest graph from $U(a,j)$, this path will reach the lowest possible vertex in (\ref{global_pic}).

Intuitively, if $P_a$ is to the left of $P_0$ in (\ref{global_pic}), we would suspect an Eulerian path on $G_b$ from $P_a$ on a maximal graph $G\in U(a,j)$ to traverse down the left side of the picture as low as possible. If such a path can reach $P_0$, then the remaining edges will be filled in to be incident to $P_0$. This intuition will be formalized in the next section.
% description 
% is complicated by the edge $e_{k+1}$, but a maximal graph can be described in a similar fashion. This is formalized this in the following section.

The order given in Example \ref{Ic_eg} is equivalent to the order defined above when $k=2$ and $n=3$. The unordered pairs of vertices satisfy
\[(P_0,P_{-1})\succ(P_0,P_1)\succ (P_{-1},P_1)\ ,\]
and if the two edges share both source and target vertices we break ties by declaring $e_1\succ e_2$.

\smallskip
Our proof of Proposition~\ref{prop.red2} will be based on the following.

\begin{proposition}\label{Ic_L_j_ker}
Assume that $n>r$ and that the base field $F$ is infinite of characteristic not dividing $r!$.

\smallskip
(a) If $j \neq 0$ then \[ \nullity (\Ic(L_j))\leqslant \delta_j, \]
where
\[ \delta_j = \begin{cases} \text{$2$, if both $j$ and $-j$ lie in $[\lceil r/2 \rceil \interval \lfloor r/2 \rfloor]$, } \\
\text{$1$, if exactly one of $j, -j$ lies in $[\lceil r/2 \rceil \interval \lfloor r/2 \rfloor]$,} \\
\text{$0$, if neither $j$ nor $-j$ lie in $[\lceil r/2 \rceil \interval \lfloor r/2 \rfloor]$.}
\end{cases}
\]

\smallskip
(b) Assume further that the characteristic of $F$ does not divide $2(2r+1)r!$. Then the $n \times n$ matrix $\Ic(L_0)$ is non-singular.
\end{proposition}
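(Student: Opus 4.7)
The plan is to prove both parts via the same mechanism. By Lemma~\ref{Ic_entries}, the $(a,b)$-entry of $\Ic(L_j)$ is the signed Eulerian count $\sum_{w \in \uni_{P_a}(G^a_b)} \sgn(w)$, where $G^a$ is the $\succ$-maximal graph in $U_{nz}(a,j)$. So my first task is to pin down $G^a$. Intuitively, the order of Definition~\ref{graph_order_def} rewards edges that hug $P_0$ (the lowest vertex in the picture~\eqref{global_pic}), so $G^a$ should look like a ``bouquet'' of short $2$-cycles through $P_0$, possibly modified by a short connecting chain to accommodate the starting vertex $P_a$ when $a \neq 0$. Producing this classification is the job of Proposition~\ref{max_graph_decomp} and its supporting lemmas in the flowchart; I expect this step to be the main technical obstacle, since conditions (iii) and (iv) of Definition~\ref{U_gen_def} rigidly constrain $G^a$'s shape depending on the residue of $a$ and $j$ against the shift set $\{\pm s_1,\dots,\pm s_r\}$.

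Once $G^a$ is pinned down, $G^a_b$ is the disjoint union of $G^a$ with an extra edge from $P_b$ to $P_{b+j}$, and the resulting graph matches one of the configurations treated in Lemmas~\ref{diag_Eulerian_sum_1}--\ref{Eulerian_sum_2}. I can then read off each nonzero entry of $\Ic(L_j)$ as $\pm$ a factorial in $r$. Under the hypothesis $\cha F \nmid r!$ these are all nonzero in $F$, so $\Ic(L_j)$ becomes an explicit sparse integer matrix with at most one or two nonzero entries per row.

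For part (a), I assemble $\Ic(L_j)$ row by row and count linear dependencies. The parameter $\delta_j$ should track when $j$ equals $\pm s_\ell$ for some $\ell$: in those cases $A_\ell = D_\ell C^{s_\ell}$ lies in $V_j$ and hence in $\Ker(L_j)$ as noted in Lemma~\ref{lem.zariski}(b), contributing a forced kernel element, and the maximal-graph analysis should show this is the only source of rank deficiency. When $\delta_j = 0$, no $A_\ell$ sits inside $V_j$, and I expect every row of $\Ic(L_j)$ to support a nonzero Eulerian sum for some column with all rows linearly independent, yielding full rank. The counting of ``forced'' kernel elements when $j$ and/or $-j$ is a shift value accounts for the three cases $\delta_j \in \{0,1,2\}$.

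For part (b), when $j = 0$ the graphs $G^a_b$ are precisely those covered by Lemma~\ref{Eulerian_sum_1}: a bouquet of $2$-cycles through $P_0$ together with a loop at $P_b$. That lemma's four cases give the entries of $\Ic(L_0)$ in closed form. I would recognize $\Ic(L_0)$ as a rank-one update of a diagonal matrix and compute its determinant via the matrix determinant lemma. I expect the determinant to factor as a nonzero integer scalar involving $2r+1$, $r!$, and a power of $2$, which explains why the hypothesis $\cha F \nmid 2(2r+1)r!$ is needed here (and only here). Verifying that no further cancellation occurs, and that the $\succ$-maximal graph really is the expected bouquet for every $a$, is the remaining bookkeeping.
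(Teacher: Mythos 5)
Your overall plan tracks the paper closely: use Lemma~\ref{Ic_entries} to convert entries of $\Ic(L_j)$ into signed Eulerian counts on the maximal graph (via Proposition~\ref{max_graph_decomp}), evaluate those counts with Lemmas~\ref{diag_Eulerian_sum_1}--\ref{Eulerian_sum_2}, and then argue linear-algebraically. For part~(a) your intuition is essentially right and numerically consistent: $\delta_j$ does equal the number of $\ell$ with $s_\ell = j$, and the paper proves the bound by showing (Proposition~\ref{Ic_L_j_ker_j_neq_0}, using the degree balance of the maximal graph) that the $a$th row of $\Ic(L_j)$ has all off-diagonal entries zero and a nonzero diagonal entry $\pm\alpha!$, except for the at most $\delta_j$ rows $a \in \{0, -j\} \cap \supp(H_r)$; dropping those rows and columns leaves a nonsingular diagonal matrix. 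That is a more direct argument than counting forced kernel vectors (which only gives a lower bound on $\nullity(L_j)$), but the conclusion is the same.

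For part~(b), however, your proposal has a real gap. You assert that all graphs $G^a_b$ for $j=0$ are covered by Lemma~\ref{Eulerian_sum_1} --- a bouquet $H_r$ plus a loop --- and that $\Ic(L_0)$ is a rank-one update of a diagonal matrix. That is true only for the rows $a \in \supp(H_r) = [-\lceil r/2\rceil \interval \lfloor r/2\rfloor]$. For $|a|$ beyond that range, Proposition~\ref{max_graph_decomp} gives the maximal graph as $H_t \CMamalg G'$ with $t<r$ and a nonempty connecting chain $G'$; attaching the loop produces the configuration of Lemma~\ref{Eulerian_sum_2}, not Lemma~\ref{Eulerian_sum_1}. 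After reordering rows and columns by $a = -\lceil \frac{n-1}{2}\rceil, \ldots, \lfloor \frac{n-1}{2}\rfloor$, the matrix $\Ic(L_0)$ therefore has the block form
\[
\left(\begin{array}{ccc} \UT & \ast & {\bf 0} \\ {\bf 0} & N & {\bf 0} \\ {\bf 0} & \ast & \LT \end{array}\right),
\]
with $\UT$ upper triangular and $\LT$ lower triangular (diagonal entries $\pm 2\alpha!$, $\alpha \leq r$, read off from Lemma~\ref{Eulerian_sum_2}) and the central $(r+1)\times(r+1)$ block $N$ coming from Lemma~\ref{Eulerian_sum_1}. The determinant factors as $\pm\det(\UT)\det(N)\det(\LT)$. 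Your matrix-determinant-lemma idea works --- but only for $N$: writing $N/(r-1)! = D + uu^T$ with $D=\diag(1,\dots,1,r,1,\dots,1)$ and $u=(1,\dots,1,r,1,\dots,1)$ gives $\det(D+uu^T)=r(2r+1)$, hence $\det N = \pm (r-1)!^{r+1}r(2r+1)$, matching the paper's Lemma~\ref{N_nonsingular}. The extra factor of $2$ in the hypothesis $\cha F \nmid 2(2r+1)r!$ is there for the $\pm 2\alpha!$ diagonal entries of $\UT$ and $\LT$ (essential when $r=1$, where $r!=1$), not for $N$. Without noticing the triangular blocks and the role of Lemma~\ref{Eulerian_sum_2}, the nonsingularity of $\Ic(L_0)$ cannot be established.

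\newcommand{\UT}{\mathcal{U}}
\newcommand{\LT}{\mathcal{L}}
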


Here $\Ic(L_j)$ is the matrix of initial coefficients of $L_j$ with respect to the order on graphs described in Definition \ref{graph_order_def}.

To see that Proposition~\ref{Ic_L_j_ker} implies Proposition~\ref{prop.red2} (and thus Theorem~\ref{main_thm}),
assume for a moment that Proposition~\ref{Ic_L_j_ker} is established. Then by Lemma~\ref{Ic_det}, 
% \begin{equation} \label{e.6-3} 
\[ \nullity(L_0) + \nullity(L_1) + \ldots \nullity(L_{n-1}) \leqslant
\nullity(\Ic(L_0)) + \nullity(\Ic(L_1)) + \ldots \nullity(\Ic(L_{n-1})) \leqslant \sum_{j = 1}^{n-1} \delta_j. \]
% \end{equation}
Each $0 \neq a \in [\lceil r/2 \rceil \interval \lfloor r/2 \rfloor]$ contributes exactly $2$ to the sum
$\sum_{j = 1}^{n-1} \delta_j$, one when $j = a$ and one when $-j = a$. (Note that $a$ contributes $2$ to this sum even if $a = -a$ in $\Z/ n \Z$.)
Since there are exactly $r$ non-zero elements $a$ in the interval $[\lceil r/2 \rceil \interval \lfloor r/2 \rfloor]$, we conclude that
$\sum_{j = 1}^{n-1} \delta_j = 2r = k$. Substituting $k$ for $\sum_{j = 1}^{n-1} \delta_j$ into the above inequality, we obtain 
\[ \nullity(L_0) + \nullity(L_1) + \ldots \nullity(L_{n-1}) \leqslant k, \]
and Proposition~\ref{prop.red2} follows.
\qed

\section{Maximal graphs} \label{sect.maximal}

As a first step towards proving Proposition~\ref{Ic_L_j_ker}, we will now describe the maximal graph of $U(a,j)$ for fixed $a$ and $j$
under the ordering defined in Section~\ref{specialization}. This is a purely graph-theoretic problem. The answer is given by 
Proposition~\ref{max_graph_decomp}, whose proof will be completed in the next section.

In the definition of $U(a,j)$ our $k$ edges were given labels $e_1,\dots,e_k$. 
In the sequence it will be convenient for us to use the following alternative labels with negative indices:
\[e_{-1}=e_{r+1},e_{-2}=e_{r+2},\dots,e_{-r}=e_k\ .\]
The graphs of $U(a,j)$ will then be defined by the source vertices $v_i$ of the $2r$ edges of the form
\[\underset {P_{v_i}} \bullet \underset {e_i} \longrightarrow \underset {P_{v_i+s_i}} \bullet\]
as $i$ ranges over $\{\pm 1,\pm 2,\dots,\pm r\}$, where $s_i=\lceil \frac i 2 \rceil$ and $s_{-i}=-\lceil \frac i 2 \rceil$ for $i=1,2,\dots,r$.

For each $t\geqslant 0$ let $H_t$ be the graph consisting of the $t$ pairs of edges of the form
\[
\begin{tikzpicture}[baseline,vertex/.style={anchor=base, circle, fill,minimum size=4pt, inner sep=0pt, outer sep=0pt},auto,
                               edge/.style={->,>=latex, shorten > = 5pt,shorten < = 5pt},
position/.style args={#1:#2 from #3}{
    at=(#3.#1), anchor=#1+180, shift=(#1:#2)
}]
  \node[vertex] (v1) {};
  \node[vertex,position = 0:{\nodeDist} from v1] (v2) {};

  \draw (v1) node[below = 13] {$P_{i}$};
  \draw (v2) node[below = 13] {$P_{0}$};

  \draw[edge] (v1) to [bend left=10] node[above] {$e_{1-2i}$} (v2);
  \draw[edge] (v2) to [bend left=15] node[below] {$e_{2i-1}$} (v1);
\end{tikzpicture}
\quad\text{if $i<0$,}\ \ \text{and}\quad
\begin{tikzpicture}[baseline,vertex/.style={anchor=base, circle, fill,minimum size=4pt, inner sep=0pt, outer sep=0pt},auto,
                               edge/.style={->,>=latex, shorten > = 5pt,shorten < = 5pt},
position/.style args={#1:#2 from #3}{
    at=(#3.#1), anchor=#1+180, shift=(#1:#2)
}]
  \node[vertex] (v1) {};
  \node[vertex,position = 0:{\nodeDist} from v1] (v2) {};

  \draw (v1) node[below = 13] {$P_{0}$};
  \draw (v2) node[below = 13] {$P_{i}$};

  \draw[edge] (v1) to [bend right=15] node[below] {$e_{2i}$} (v2);
  \draw[edge] (v2) to [bend right=10] node[above] {$e_{-2i}$} (v1);
\end{tikzpicture}
\quad\text{if $i>0$,}
\]
as $i$ ranges over $[ - \lceil t/2 \rceil \interval \lfloor t/2 \rfloor ]\setminus \{0\}$.
When $t=0$, $H_0$ is the empty graph with no edges, and for each $t$, $H_{t+1}$ is the disjoint union of $H_t$ 
with a pair of edges connecting $P_0$ to the next largest vertex. For example, $H_5$ is the graph
\[%H_t=
\begin{tikzpicture}[baseline, vertex/.style={anchor = base, circle, fill,minimum size=4pt, inner sep=0pt, outer sep=0pt},auto,
                               edge/.style={->,>=latex, shorten > = 6pt,shorten < = 5pt},
position/.style args={#1:#2 from #3}{
    at=(#3.#1), anchor=#1+180, shift=(#1:#2)
}]

  \node[vertex] (v0) {};
  \coordinate[position=270:{\nodeDist*0.7} from v0]  (z0) {};
  \node[vertex,position = 175:{\nodeDist*1.1} from z0] (v1L) {};
  \node[vertex,position = 168:{\nodeDist} from v1L] (v2L) {};
  \node[vertex,position = 150:{\nodeDist*1.1} from v2L] (v3L) {};
%  \node[vertex,position = 140:{\nodeDist*1.5} from v2L] (v3L) {};
  \node[vertex,position = 8:{\nodeDist*1.4} from z0] (v1R) {};
  \node[vertex,position = 21:{\nodeDist*1.1} from v1R] (v2R) {};
%  \node[vertex,position = 33:{\nodeDist*1.8} from v1R] (v2R) {};

  \draw (v0) node[above=5] {$P_0$};
  \draw (v1L) node[below] {$P_{-1}$};
  \draw (v2L) node[below] {$P_{-2}$};
  \draw (v3L) node[below] {$P_{-3}$};
%  \draw (v3L) node[below] {$P_{-\lceil \frac t 2 \rceil}$};
  \draw (v1R) node[below] {$P_{1}$};
  \draw (v2R) node[below] {$P_{2}$};
%  \draw (v2R) node[below] {$P_{\lfloor \frac t 2 \rfloor}$};

  \draw[edge] (v0) to [bend left=32] node[below=5, right=0] {$e_{-1}$} (v1L);
  \draw[edge] (v1L) to [bend right=12] node[above=4, left=-3] {$e_1$} (v0);

  \draw[edge] (v0) to [bend right=7] node[below] {$e_{-3}$} (v2L);
  \draw[edge] (v2L) to [bend left=20] node[above] {$e_{3}$} (v0);

%  \draw[dashed,shorten > = 17pt, shorten < = 10pt] (v2L) -- (v3L);

  \draw[edge] (v0) to [bend right=16] node[below] {$e_{-5}$} (v3L);
  \draw[edge] (v3L) to [bend left=30] node[above] {$e_{5}$} (v0);
  
%  \draw[edge] (v0) to [bend right=16] node[below] {$e_{-2\lceil \frac t 2 \rceil+1}$} (v3L);
%  \draw[edge] (v3L) to [bend left=30] node[above] {$e_{2\lceil \frac t 2 \rceil-1}$} (v0);

  \draw[edge] (v0) to [bend right=32] node[below=5, left=0] {$e_2$} (v1R);
  \draw[edge] (v1R) to [bend left=12] node[above=5, right=-3] {$e_{-2}$} (v0);

%  \draw[dashed,shorten > = 20pt, shorten < = 10pt] (v1R) -- (v2R);

  \draw[edge] (v0) to [bend left=6] node[below] {$e_{4}$} (v2R);
  \draw[edge] (v2R) to [bend right=20] node[above] {$e_{-4}$} (v0);

%  \draw[edge] (v0) to [bend left=6] node[below] {$e_{2\lfloor \frac t 2 \rfloor}$} (v2R);
%  \draw[edge] (v2R) to [bend right=20] node[above] {$e_{-2\lfloor \frac t 2 \rfloor}$} (v0);
\end{tikzpicture}\ .
\]
%When $t=0$, $H_t$ is the empty graph, having no edges.
% This $H_t$ is the largest subgraph with $2t$ edges of any graph in $U(a,j)$, as shown in the following lemma. 
The following lemma shows that the maximal graph from $U(a,j)$ will contain $H_t$ for the largest possible $t$.

\begin{lemma}\label{max_subgraphs}
Let $G,G'\in U(a,j)$. If there exists $t$ such that $G \supset H_t$ and $G'\not\supset H_t$, then $G \succ G'$.
\end{lemma}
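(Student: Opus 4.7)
The approach I would take is induction on $t$. The base case $t=0$ is vacuous because $H_0$ has no edges, so the hypothesis ``$G' \not\supset H_0$'' is never satisfied. For the inductive step, fix $t \geqslant 1$, $G \supset H_t$, and $G' \not\supset H_t$. If $G' \not\supset H_{t-1}$, then applying the inductive hypothesis at level $t-1$ (together with $G \supset H_t \supset H_{t-1}$) immediately gives $G \succ G'$; the substantive case is therefore $G' \supset H_{t-1}$.

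The crux is the following auxiliary claim: for any $H \in U(a,j)$ with $H \supset H_{t-1}$, the top $2(t-1)$ pairs of $H$ under the lexicographic order of Definition~\ref{graph_order_def} are exactly the pairs of $H_{t-1}$. To see this, observe that the $t-1$ unordered endpoint pairs used by $H_{t-1}$ -- namely $\{P_0, P_{-1}\}, \{P_0, P_1\}, \{P_0, P_{-2}\}, \ldots$ -- are the $\succ$-largest such endpoint pairs possible (a loop at $P_0$ is impossible since $s_\ell \not\equiv 0 \pmod n$ under the hypothesis $n > r$), and for each of these $H_{t-1}$ already contributes one edge in each of the two possible directions between $P_0$ and the other endpoint. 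Any further edge of $H \setminus H_{t-1}$ carrying one of these endpoint pairs would therefore duplicate both the source and the target of some edge of $H_{t-1}$, violating condition~(iii) of Definition~\ref{U_gen_def}. Consequently every edge of $H \setminus H_{t-1}$ has a strictly smaller endpoint pair, and so is $\prec$ every edge of $H_{t-1}$.

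Applied to both $G$ and $G'$, this forces their sorted pair-lists to agree in positions $1, \ldots, 2(t-1)$. For positions $2t-1$ and $2t$, an analogous analysis pins down the two new pairs $h_{2t-1} \succ h_{2t}$ of $H_t \setminus H_{t-1}$: they carry the next-largest endpoint pair (either $\{P_0, P_{-\lceil t/2\rceil}\}$ or $\{P_0, P_{t/2}\}$, depending on the parity of $t$) and use the smallest admissible labels $e_t, e_{-t}$, one in each direction. Since $G \supset H_t$, its positions $2t-1, 2t$ are exactly $h_{2t-1}, h_{2t}$. Since $G' \not\supset H_t$, it omits at least one of these two pairs, and the first such omission is filled in $G'$'s sorted list either by a pair at the same endpoint pair with a strictly larger label, or by a pair with a strictly smaller endpoint pair -- in either event, the replacement is $\prec$ the corresponding $h_i$. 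Thus the sorted lists first diverge in $G$'s favor, and $G \succ G'$.

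The principal obstacle is the careful bookkeeping behind the auxiliary claim and its extension to $H_t$: for each endpoint pair one must enumerate the up to four edge labels $\ell$ with $s_\ell = \pm\lceil i/2\rceil$ and verify that every candidate not in $H_t$ either has already been consumed by a previous iteration of $H_{t-1}$ (for even $i$ the labels $\pm(i-1)$ are used up) or, if placed at the relevant endpoint pair, would coincide in direction with one of the $h$'s (for odd $i$ the would-be competitors $e_{\pm(i+1)}$ share $s_\ell = \pm\lceil i/2\rceil$ with $e_{\pm i}$). The parity of $i$ and of $t$ dictates which obstruction applies, so some routine case-splitting on parity is unavoidable.
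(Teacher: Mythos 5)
Your proof is correct and rests on the same observation as the paper's: the edges of $H_t$ saturate both directions at the $t$ largest endpoint pairs $\{P_0,P_{-1}\}\succ\{P_0,P_1\}\succ\cdots$ with the $\succ$-best admissible labels, so they occupy the top $2t$ positions of the sorted $(e_\ell,\src(e_\ell))$-list of any graph in $U(a,j)$ containing $H_t$, and dominate the list of any graph not containing $H_t$. The paper states this in one stroke while you package it as an induction on $t$ with a separate treatment of positions $2t-1,\,2t$, but the underlying argument (largest vertex pairs plus the no-repeated-edges bound plus the label tie-break) is identical.
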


\begin{proof}
Our order on edges follows the lexicographic order on pairs of vertices, with
\[(P_0,P_{-1})\succ (P_0,P_1)\succ (P_0,P_{-2})\succ (P_0,P_2)\succ \dots\]
being the maximal pairs.
There are at most $2$ edges connecting any pair of vertices in a graph with no repeated edges, and if two edges connect the same pair of vertices, 
ties are broken using the following order on the edge labels:
\[ \text{$e_{2\alpha+1}\succ e_{2\alpha+2}\succ e_{-2\alpha-1}\succ e_{-2\alpha-2}$ for any $\alpha\geqslant 0$.} \]
We conclude that the largest $2t$ edges that could possibly appear in a graph $G \in U(a, j)$
are those in $H_t$, and the lemma follows.
\end{proof}

Our next goal is to prove the following proposition, which describes the maximal graph of $U(a,j)$.

\begin{proposition}\label{max_graph_decomp}
Let $t$ be the largest integer such that there exists $G\in U(a,j)$ containing $H_t$.  Let $P_{a'}=\max(P_a,P_{a+j})$.

\smallskip
(a) If $a'\in[-\lceil \frac{n-1}{2} \rceil \interval 0]$, then define $\hat G$ to be the graph

\renewcommand{\nodeDist}{2}
\[
\begin{tikzpicture}[vertex/.style={circle, fill,minimum size=4pt, inner sep=0pt, outer sep=0pt},auto,
                               edge/.style={->,>=latex, shorten > = 5pt,shorten < = 5pt},
position/.style args={#1:#2 from #3}{
    at=(#3.#1), anchor=#1+180, shift=(#1:#2)
}]
  \node[vertex] (v1) {};
  \node[vertex,position = -10:{\nodeDist} from v1] (v2) {};
  \coordinate[position = -10:{\nodeDist} from v2] (v3);
  \coordinate[position = -10:{\nodeDist} from v3] (v4) {};
  \node[vertex,position = -10:{\nodeDist} from v4] (v5) {};
  \node[draw, anchor=north west,rectangle, minimum height=60pt, minimum width=120pt,xshift=-15pt, yshift=10pt] (Hm) at (v5) {$H_t$};

  \draw (v1) node[below] {$P_{a'}$};
  \draw (v2) node[below] {$P_{a'+s_{r}}$};
  \draw (v5) node[right] {$P_{a'+s_{r}+\dots+s_{t+1}}$};

  \draw[edge] (v1) to [bend left=10] node[above] {$e_{r}$} (v2);
  \draw[edge] (v2) to [bend left=15] node[below] {$e_{-r}$} (v1);

  \draw[edge] (v2) to [bend left=10] node[above] {$e_{r-1}$} (v3);
  \draw[edge] (v3) to [bend left=15] node[below] {$e_{-r+1}$} (v2);

  \draw[dashed,shorten > = 5pt, shorten < = 5pt] (v3) -- (v4);

  \draw[edge] (v4) to [bend left=10] node[above] {$e_{t+1}$} (v5);
  \draw[edge] (v5) to [bend left=15] node[below] {$e_{-t-1}$} (v4);
\end{tikzpicture}\enskip .
\]
In other words, $\hat G=H_t\ \CMamalg\ \hat G'$ where
\renewcommand{\nodeDist}{2}
\[\hat G'=
\begin{tikzpicture}[baseline,vertex/.style={anchor=base, circle, fill,minimum size=4pt, inner sep=0pt, outer sep=0pt},auto,
                               edge/.style={->,>=latex, shorten > = 5pt,shorten < = 5pt},
position/.style args={#1:#2 from #3}{
    at=(#3.#1), anchor=#1+180, shift=(#1:#2)
}]
  \node[vertex] (v2) {};
  \node[vertex,position = 170:{\nodeDist} from v2] (v1) {};
  \coordinate[position = -10:{\nodeDist} from v2] (v3);
  \coordinate[position = -10:{\nodeDist} from v3] (v4) {};
  \node[vertex,position = -10:{\nodeDist} from v4] (v5) {};

  \draw (v1) node[below] {$P_{a'}$};
  \draw (v2) node[below] {$P_{a'+s_{r}}$};
  \draw (v5) node[right] {$P_{a'+s_{r}+\dots+s_{t+1}}$};

  \draw[edge] (v1) to [bend left=10] node[above] {$e_{r}$} (v2);
  \draw[edge] (v2) to [bend left=15] node[below] {$e_{-r}$} (v1);

  \draw[edge] (v2) to [bend left=10] node[above] {$e_{r-1}$} (v3);
  \draw[edge] (v3) to [bend left=15] node[below] {$e_{-r+1}$} (v2);

  \draw[dashed,shorten > = 5pt, shorten < = 5pt] (v3) -- (v4);

  \draw[edge] (v4) to [bend left=10] node[above] {$e_{t+1}$} (v5);
  \draw[edge] (v5) to [bend left=15] node[below] {$e_{-t-1}$} (v4);
\end{tikzpicture}\enskip .
\]

(b) If $a'\in[1\interval \lfloor\frac{n-1}{2} \rfloor]$, then define $\hat G$ to be the graph

\renewcommand{\nodeDist}{2}
\[
\begin{tikzpicture}[vertex/.style={circle, fill,minimum size=4pt, inner sep=0pt, outer sep=0pt},auto,
                               edge/.style={->,>=latex, shorten > = 5pt,shorten < = 5pt},
position/.style args={#1:#2 from #3}{
    at=(#3.#1), anchor=#1+180, shift=(#1:#2)
}]

  \node[vertex] (v1) {};
  \node[vertex,position = 190:{\nodeDist} from v1] (v2) {};
  \coordinate[position = 190:{\nodeDist} from v2] (v3);
  \coordinate[position = 190:{\nodeDist} from v3] (v4) {};
  \node[vertex,position = 190:{\nodeDist} from v4] (v5) {};
  \node[draw, anchor=north east,rectangle, minimum height=60pt, minimum width=120pt,xshift=15pt, yshift=10pt] (Hm) at (v5) {$H_t$};

  \draw (v1) node[below] {$P_{a'}$};
  \draw (v2) node[below = 8] {$P_{a'-s_{r}}$};
  \draw (v5) node[below = 2] {$P_{a'-s_{r}-\dots-s_{t+1}}$};

  \draw[edge] (v1) to [bend right=15] node[above] {$e_{-r}$} (v2);
  \draw[edge] (v2) to [bend right=10] node[below] {$e_{r}$} (v1);

  \draw[edge] (v2) to [bend right=15] node[above] {$e_{-r+1}$} (v3);
  \draw[edge] (v3) to [bend right=10] node[below] {$e_{r-1}$} (v2);

  \draw[dashed,shorten > = 5pt, shorten < = 5pt] (v3) -- (v4);

  \draw[edge] (v4) to [bend right=15] node[above] {$e_{-t+1}$} (v5);
  \draw[edge] (v5) to [bend right=10] node[below] {$e_{t-1}$} (v4);
\end{tikzpicture}\enskip .
\]
In other words, $\hat G=H_t\ \CMamalg\ \hat G'$ where
\renewcommand{\nodeDist}{2}
\[\hat G'=
\begin{tikzpicture}[baseline,vertex/.style={anchor=base, circle, fill,minimum size=4pt, inner sep=0pt, outer sep=0pt},auto,
                               edge/.style={->,>=latex, shorten > = 5pt,shorten < = 5pt},
position/.style args={#1:#2 from #3}{
    at=(#3.#1), anchor=#1+180, shift=(#1:#2)
}]

  \node[vertex] (v2) {};
  \node[vertex,position = 10:{\nodeDist} from v2] (v1) {};
  \coordinate[position = 190:{\nodeDist} from v2] (v3);
  \coordinate[position = 190:{\nodeDist} from v3] (v4) {};
  \node[vertex,position = 190:{\nodeDist} from v4] (v5) {};

  \draw (v1) node[below] {$P_{a'}$};
  \draw (v2) node[below = 8] {$P_{a'-s_{r}}$};
  \draw (v5) node[below = 2] {$P_{a'-s_{r}-\dots-s_{t+1}}$};

  \draw[edge] (v1) to [bend right=15] node[above] {$e_{-r}$} (v2);
  \draw[edge] (v2) to [bend right=10] node[below] {$e_{r}$} (v1);

  \draw[edge] (v2) to [bend right=15] node[above] {$e_{-r+1}$} (v3);
  \draw[edge] (v3) to [bend right=10] node[below] {$e_{r-1}$} (v2);

  \draw[dashed,shorten > = 5pt, shorten < = 5pt] (v3) -- (v4);

  \draw[edge] (v4) to [bend right=15] node[above] {$e_{-t+1}$} (v5);
  \draw[edge] (v5) to [bend right=10] node[below] {$e_{t-1}$} (v4);
\end{tikzpicture}\enskip .
\]
Then $\hat G$ is the maximal graph from $U(a,j)$.
\end{proposition}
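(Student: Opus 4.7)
My plan is to argue in two stages: first, verify that $\hat{G}$ itself belongs to $U(a,j)$; second, show that $\hat{G}$ is maximal among all graphs in $U(a,j)$.

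For the first stage, take $b=a$ in Definition~\ref{U_gen_def} and check that $\hat{G}_a$ admits an Eulerian path from $P_a$ via Theorem~\ref{eulerian_unic}. Every vertex of $\hat{G}$ has equal in-degree and out-degree: in $H_t$ each pair of edges between $P_0$ and $P_i$ traverses that pair of vertices in both directions, and in $\hat{G}'$ each pair $(e_i,e_{-i})$ with $|i|>t$ traverses its two endpoints in both directions. Appending $e_{k+1}$ from $P_a$ to $P_{a+j}$ therefore creates exactly the degree imbalance required by Theorem~\ref{eulerian_unic} (with $P_a$ and $P_{a+j}$ as endpoints when $j\neq 0$, or with all degrees equal when $j=0$). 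Connectivity of $\hat{G}_a$ is the remaining input and should be supplied by Lemma~\ref{subgraph_connected}: the chain $\hat{G}'$ together with $e_{k+1}$ must reach $P_0$ (and hence all of $H_t$), which is forced by the maximality of $t$ — if the chain failed to meet $H_t$ in the combined graph, then $H_{t+1}$ would also be realizable in some graph of $U(a,j)$, contradicting the definition of $t$. This uses the supporting Lemma~\ref{max_connecting_path}.

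The second stage, which is the main obstacle, is maximality. By Lemma~\ref{max_subgraphs} any $G\in U(a,j)$ at least as large as $\hat{G}$ must also contain $H_t$, so write $G = H_t\ \CMamalg\ G'$ with $G'$ using the $2(r-t)$ edges $e_{\pm(t+1)},\dots,\pm e_r$. Lexicographic comparison of $\hat{G}$ with $G$ then reduces to comparison of $\hat{G}'$ with $G'$. I would argue inductively on the edge pairs in decreasing order $e_r, e_{-r}, e_{r-1}, e_{-r+1},\dots$ Among the pairs $(e_\ell,\src(e_\ell))$ the largest that can appear in $G'$ is $(e_r,P_{a'})$: indeed $P_{a'}=\max(P_a, P_{a+j})$ is the largest vertex that is permitted to have unbalanced degree in the underlying graph (since the Euler-path degree conditions already force every other vertex outside $H_t$ to have balanced degree in $G\cup\{e_{k+1}\}$, and any larger source would belong to $H_t$ and violate maximality of $t$). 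In cases (a) and (b) the chain then propagates outward from $P_{a'}$ in the direction that visits unused large vertices — negative direction toward $H_t$ when $a'\leqslant 0$, positive direction toward $H_t$ when $a'\geqslant 1$ — because the extension in the opposite direction would either revisit a vertex of $H_t$ (creating repeated edges and violating condition~(iii) of Definition~\ref{U_gen_def}) or use a larger-shifted edge on a smaller pair of vertices than allowed by the ordering. Iterating this step produces $\hat{G}'$ uniquely.

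The hardest part will be the inductive maximality argument in stage two: one must rule out non-chain configurations of the remaining edges (for instance, placing $e_r$ and $e_{-r}$ incident to disjoint vertex pairs, or interleaving them with the vertices of $H_t$), showing each alternative either shrinks the lexicographic weight or destroys the Eulerian-path property. I expect this to be exactly what Lemmas~\ref{max_connecting_path} and~\ref{subgraph_connected} encapsulate: the first controlling which source vertex a largest-remaining edge can receive, and the second guaranteeing that the chosen placement keeps $\hat{G}_a$ connected. With those lemmas as black boxes, the proof of Proposition~\ref{max_graph_decomp} becomes a bookkeeping induction on $r-t$, proceeding in opposite directions along the vertex order $P_0\succ P_{-1}\succ P_1\succ\dots$ depending on the sign of $a'$.
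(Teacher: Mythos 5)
Your two-stage outline (show $\hat G\in U(a,j)$, then show maximality) and your list of supporting lemmas match the paper's structure, but both stages as written have genuine gaps and, more importantly, you misassign the role of Lemma~\ref{subgraph_connected}.

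In Stage~1 you invoke Lemma~\ref{subgraph_connected} to get connectivity of $\hat G_a$. That lemma is a statement about the \emph{maximal} graph $G$ in $U(a,j)$ --- which, at that point in the argument, you have not yet identified with $\hat G$ --- so it cannot be applied there. The paper instead verifies membership of $\hat G$ in $U(a,j)$ directly: $\hat G_a$ is visibly a chain joined to the flower $H_t$, so it has an Eulerian circuit once one checks there are no repeated edges, which follows because Lemma~\ref{max_t}(a) bounds $s_r+\dots+s_{t+1}$ by $|a'|$ and hence makes the chain's vertices distinct and disjoint from $H_t$ except at the single junction.

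In Stage~2 the central claim, that $(e_r,P_{a'})$ is the largest pair that can appear in $G'$ because ``$P_{a'}$ is the largest vertex permitted to have unbalanced degree'' and ``any larger source would belong to $H_t$ and violate maximality of $t$,'' is not valid reasoning. A vertex of $\supp(H_t)$ such as $P_0$ is perfectly allowed to have balanced degree in $G_b$ and still be the source of an $e_\ell$-edge with $\ell\in I_t$, so the degree argument does not exclude it, and no contradiction with the maximality of $t$ follows just from $e_\ell$ being sourced inside $H_t$. What actually closes this hole in the paper is the correct use of Lemma~\ref{subgraph_connected}: it shows that when the maximal graph $G$ is written as $H_t\,\CMamalg\,G'$, the graph $G'_b$ remains connected and degree-balanced, i.e.\ $G'\in U(a,j,I_t)$. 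Only then does Lemma~\ref{max_connecting_path} apply, yielding that $\hat G'$ is the maximum of $U(a,j,I_t)$, so $\hat G'\succeq G'$; combined with Stage~1 (which gives $G\succeq\hat G$) one concludes $G=\hat G$. Your direct induction on edge pairs is really an attempt to reprove $G'\in U(a,j,I_t)$ together with the chain-maximality in one step, and it does not succeed because the constraints you have available are on $G_b$, not on $G'$ alone.

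So the gap is concrete: you need the precondition $G'\in U(a,j,I_t)$ before the chain-maximality argument can run, and that is precisely what Lemma~\ref{subgraph_connected} supplies --- not a connectivity statement about $\hat G$.
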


This result is a key step in our proof of Proposition~\ref{Ic_L_j_ker} and thus of Theorem~\ref{main_thm}.
The remainder of this section will be devoted to proving a preparatory lemma, Lemma~\ref{max_connecting_path}. It
describes the maximal graphs in $U(a, j, I)$, where $I$ is a subset of $\{ 1, 2, \ldots, r \} \cup \{-1,-2,\dots,-r\}$, under certain conditions on $a$, $j$ and $I$.
In the subsequent application $\{ e_i \, | \, i \in I \}$ will be the set of edges that are not used in $H_t$.
We will use Lemma~\ref{max_connecting_path} to complete the proof of Proposition~\ref{max_graph_decomp}
in Section~\ref{sect.maximal_2}.

\begin{lemma}\label{max_connecting_path}
Let $I=\{i_1,i_2,\dots,i_\alpha,-i_1,-i_2,\dots,-i_\alpha\}\subseteq \{1,2,\dots,r\}\cup\{-1,-2,\dots,-r\}$ have corresponding edges labeled $e_{i_1},e_{i_2},\dots,e_{i_\alpha},e_{-i_1},e_{-i_2},\dots,e_{-{i_\alpha}}$ for some $i_\alpha > \dots > i_1>0$. Assume that $s_{i_1}+s_{i_2}+\cdots + s_{i_\alpha}\leqslant|a|,|a+j|$, $P_{a'}=\max(P_a,P_{a+j})$, and $G$ is the maximal graph in $U(a,j,I)$.

(a) If $a'\in [-\lceil\frac{n-1}{2} \rceil\interval 0]$, then $G$ is the graph
\[
\begin{tikzpicture}[vertex/.style={circle, fill,minimum size=4pt, inner sep=0pt, outer sep=0pt},auto,
                               edge/.style={->,>=latex, shorten > = 5pt,shorten < = 5pt},
position/.style args={#1:#2 from #3}{
    at=(#3.#1), anchor=#1+180, shift=(#1:#2)
}]

  \node[vertex] (v1) {};
  \node[vertex,position = -10:{\nodeDist} from v1] (v2) {};
  \coordinate[position = -10:{\nodeDist} from v2] (v3);
  \coordinate[position = -10:{\nodeDist} from v3] (v4) {};
  \node[vertex,position = -10:{\nodeDist} from v4] (v5) {};

  \draw (v1) node[below] {$P_{a'}$};
  \draw (v2) node[below] {$P_{a'+s_{i_{\alpha}}}$};
  \draw (v5) node[right] {$P_{a'+s_{i_{\alpha}}+\dots+s_{i_1}}$};

  \draw[edge] (v1) to [bend left=10] node[above] {$e_{i_\alpha}$} (v2);
  \draw[edge] (v2) to [bend left=15] node[below] {$e_{-i_\alpha}$} (v1);

  \draw[edge] (v2) to [bend left=10] node[above] {$e_{i_{\alpha-1}}$} (v3);
  \draw[edge] (v3) to [bend left=15] node[below] {$e_{-i_{\alpha-1}}$} (v2);

  \draw[dashed,shorten > = 5pt, shorten < = 5pt] (v3) -- (v4);

  \draw[edge] (v4) to [bend left=10] node[above] {$e_{i_1}$} (v5);
  \draw[edge] (v5) to [bend left=15] node[below] {$e_{-i_1}$} (v4);
\end{tikzpicture}\ .
\]

(b) If $a'\in[1\interval \lfloor\frac{n-1}2\rfloor]$, then $G$ is the graph
%\[
%\bullet
%\underset {e_{-1}} {\overset {e_1} \rightleftarrows}
%\ \cdots\ 
%\underset {e_{-\alpha+1}} {\overset {e_{\alpha-1}} \rightleftarrows}
%\underset {P_{b-i_\alpha}} \bullet \underset {e_{-\alpha}} {\overset {e_\alpha} \rightleftarrows}
%\underset {P_b} \bullet\]
\[
\begin{tikzpicture}[vertex/.style={circle, fill,minimum size=4pt, inner sep=0pt, outer sep=0pt},auto,
                               edge/.style={->,>=latex, shorten > = 5pt,shorten < = 5pt},
position/.style args={#1:#2 from #3}{
    at=(#3.#1), anchor=#1+180, shift=(#1:#2)
}]

  \node[vertex] (v1) {};
  \node[vertex,position = 190:{\nodeDist} from v1] (v2) {};
  \coordinate[position = 190:{\nodeDist} from v2] (v3);
  \coordinate[position = 190:{\nodeDist} from v3] (v4) {};
  \node[vertex,position = 190:{\nodeDist} from v4] (v5) {};

  \draw (v1) node[below] {$P_{a'}$};
  \draw (v2) node[below = 8] {$P_{a'-s_{i_{\alpha}}}$};
  \draw (v5) node[below = 2] {$P_{a'-s_{i_{\alpha}}-\dots-s_{i_1}}$};

  \draw[edge] (v1) to [bend right=15] node[above] {$e_{-i_\alpha}$} (v2);
  \draw[edge] (v2) to [bend right=10] node[below] {$e_{i_\alpha}$} (v1);

  \draw[edge] (v2) to [bend right=15] node[above] {$e_{-i_{\alpha-1}}$} (v3);
  \draw[edge] (v3) to [bend right=10] node[below] {$e_{i_{\alpha-1}}$} (v2);

  \draw[dashed,shorten > = 5pt, shorten < = 5pt] (v3) -- (v4);

  \draw[edge] (v4) to [bend right=10] node[above] {$e_{-i_1}$} (v5);
  \draw[edge] (v5) to [bend right=15] node[below] {$e_{i_1}$} (v4);
\end{tikzpicture}\ .
\]
\end{lemma}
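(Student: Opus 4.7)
I would prove the lemma by induction on $\alpha$. The base case $\alpha = 0$ is immediate: $U(a, j, \emptyset) = \{\emptyset\}$, and the empty graph vacuously satisfies the conclusion. For the inductive step the plan has two parts: first verify that the claimed graph $\hat G$ lies in $U(a, j, I)$, then show it is maximal under the ordering of Definition~\ref{graph_order_def}.

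For the first part, each chain vertex of $\hat G$ is incident to a cancelling pair $e_{i_k}, e_{-i_k}$, so every vertex of $\hat G$ has matching in- and out-degree. Taking $b = a$, the augmented graph $\hat G_a$ acquires $\outdeg - \indeg = +1$ at $P_a$ and $-1$ at $P_{a+j}$ from the extra edge $e_{k+1}$, matching exactly the imbalances demanded by Theorem~\ref{eulerian_unic}. Moreover $\hat G_a$ is connected: since $P_{a'} \in \{P_a, P_{a+j}\}$ lies on the chain, the extra edge $e_{k+1}$ attaches the other of $P_a, P_{a+j}$ to it. Theorem~\ref{eulerian_unic} then supplies the required Eulerian path.

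For maximality, the crucial ingredient is a reachability bound: in every $G \in U(a, j, I)$, each vertex in the support of $G$ lies within position-distance $S := s_{i_1} + \cdots + s_{i_\alpha}$ of $P_{a'}$. This follows because $G$ is near-balanced (at most two vertices have imbalance $\pm 1$, since $G_b$ admits an Eulerian path from $P_a$), and a connected (near-)balanced directed graph decomposes into edge-disjoint cycles whose total one-sided step sum equals $S$. In case (a), combining $a' \leqslant 0$ with the hypothesis $S \leqslant |a'|$ confines the support to the non-positive semicircle of~\eqref{global_pic}, so the highest-ordered vertex that can appear in any such $G$ is $P_{a'+S}$. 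The largest edge of $G$ must therefore be incident to $P_{a'+S}$; among admissible step sizes the smallest $s_{i_1}$ maximizes the other endpoint (placing it at $P_{a'+S-s_{i_1}}$), and the tiebreak in~\eqref{edge_order} selects the label $e_{i_1}$ over $e_{-i_1}$. A degree-balance argument, again via Theorem~\ref{eulerian_unic} together with the reachability bound (which forbids any edge beyond $P_{a'+S}$), forces $e_{-i_1}$ to share the same two endpoints in the reversed direction. Removing this top pair yields a graph with edge labels $I \setminus \{\pm i_1\}$ whose hypotheses are preserved ($s_{i_2} + \cdots + s_{i_\alpha} = S - s_{i_1} \leqslant |a|, |a+j|$), so by the inductive hypothesis it is the $(\alpha - 1)$-link sub-chain. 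Case (b) follows by the reflection symmetry $P_v \leftrightarrow P_{-v}$ swapping the two branches of~\eqref{global_pic}.

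The main obstacle will be the reachability bound and the subsequent forcing of the placement of $e_{-i_1}$: the first requires a careful cycle decomposition of a near-balanced connected graph to bound the reach by $S$ rather than the naive $2S$, and the second combines this bound with the Eulerian degree conditions to rule out all placements of $e_{-i_1}$ other than opposite to $e_{i_1}$. Subcases in which $P_{a'+S}$ coincides with $P_a$ or $P_{a+j}$, and checking that the reduced problem still satisfies the inductive hypothesis, will also require separate attention.
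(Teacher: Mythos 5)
Your overall strategy mirrors the paper's: induct on $\alpha$, identify the highest-ordered vertex $P_{v_0}$ that can carry an edge, argue that the top pair $e_{\pm i_1}$ must sit there, and recurse. However, the ``crucial ingredient'' you rely on is stated incorrectly. You claim that for \emph{every} $G \in U(a,j,I)$ the support of $G$ lies within position-distance $S = s_{i_1} + \cdots + s_{i_\alpha}$ of $P_{a'}$. This is false: the support of $G$ is determined by the partial sums of the displacements along an Eulerian path of $G_b$ starting at $P_a$, and since $e_{k+1}$ (with displacement $j$) may be traversed at any point, the support can lie near $P_{a+j}$ rather than near $P_{a'}$. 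For instance, take $I=\{1,-1\}$, $a=-3$, $j=-2$; the graph with $e_1 \colon P_{-5}\to P_{-4}$ and $e_{-1}\colon P_{-4}\to P_{-5}$ lies in $U(a,j,I)$ (with $b=-3$) and has support $\{-5,-4\}$, outside $[a'-S \interval a'+S]=[-4\interval-2]$. The paper's bound is that the support lies in $[a-S\interval a+S]\cup[a+j-S\interval a+j+S]$, proved by the elementary observation that any reached vertex differs from $a$ (or $a+j$, after traversing $e_{k+1}$) by a partial sum of $\pm s_{i_1},\ldots,\pm s_{i_\alpha}$; the hypothesis $S\leqslant|a|,|a+j|$ keeps $0$ out of the interior of both intervals, and the highest-ordered vertex in the union is then $P_{a'+S}$ (case (a)). Your cycle-decomposition heuristic neither establishes the bound you state nor the correct one; the direct partial-sum argument is both simpler and what is actually needed. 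The conclusion you draw from your bound (that $P_{a'+S}$ is the highest reachable vertex) is nonetheless correct, so once the bound is replaced, the remainder of your sketch aligns with the paper.

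Two smaller points. First, you flag the subcases $P_{a'+S}\in\{P_a,P_{a+j}\}$ as needing attention, but they cannot occur: with $\alpha\geqslant 1$ one has $S>0$, and a short computation using $S\leqslant|a|,|a+j|$ shows $a'+S$ cannot equal $a$ or $a+j$ (the paper asserts this tacitly as ``$v_0\neq a,a+j$''). Second, when you ``remove the top pair'' and invoke the inductive hypothesis, you still need to know that the claimed chain, once reassembled with $e_{\pm i_1}$, lies in $U(a,j,I)$; the paper handles this by constructing the full chain $\hat G$ explicitly (your part (i) does the analogous degree-balance and connectedness check), rather than by arguing that removing $e_{\pm i_1}$ from the maximal $G$ leaves a graph in $U(a,j,I\setminus\{\pm i_1\})$ --- the latter is not immediate, since connectivity could break.
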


\begin{proof}
Let $P_v=\max(P_{a'+s},P_{a'-s})$, where $s=s_{i_1}+s_{i_2}+\cdots+s_{i_\alpha}$. We claim that the edges $e_{i_1}$ and $e_{-i_1}$ appear in $G$, incident on $P_v$, as in the picture above. That is, if $a'\in[-\lceil \frac{n-1}{2}\rceil\interval 0]$, then $v=a'+s$ and
\[
\begin{tikzpicture}[baseline,vertex/.style={anchor=base, circle, fill,minimum size=4pt, inner sep=0pt, outer sep=0pt},auto,
                               edge/.style={->,>=latex, shorten > = 5pt,shorten < = 5pt},
position/.style args={#1:#2 from #3}{
    at=(#3.#1), anchor=#1+180, shift=(#1:#2)
}]
  \node[vertex] (v1) {};
  \node[vertex,position = 0:{\nodeDist} from v1] (v2) {};

  \draw (v1) node[below = 13] {$P_{v-s_{i_1}}$};
  \draw (v2) node[below = 13] {$P_{v}$};

  \draw[edge] (v1) to [bend left=10] node[above] {$e_{i_1}$} (v2);
  \draw[edge] (v2) to [bend left=15] node[below] {$e_{-i_1}$} (v1);
\end{tikzpicture}
\]
appears in $G$, and if $a'\in[1 \interval \lfloor \frac{n-1}2\rfloor]$, then $v=a'-s$ and
\[
\begin{tikzpicture}[baseline,vertex/.style={anchor=base, circle, fill,minimum size=4pt, inner sep=0pt, outer sep=0pt},auto,
                               edge/.style={->,>=latex, shorten > = 5pt,shorten < = 5pt},
position/.style args={#1:#2 from #3}{
    at=(#3.#1), anchor=#1+180, shift=(#1:#2)
}]
  \node[vertex] (v1) {};
  \node[vertex,position = 0:{\nodeDist} from v1] (v2) {};

  \draw (v1) node[below = 13] {$P_{v}$};
  \draw (v2) node[below = 13] {$P_{v+s_{i_1}}$};

  \draw[edge] (v1) to [bend right=15] node[below] {$e_{i_1}$} (v2);
  \draw[edge] (v2) to [bend right=10] node[above] {$e_{-i_1}$} (v1);
\end{tikzpicture}
\]
appears in $G$.

Let $b\in \Z/n\Z$ be such that $G_b=G\ \CMamalg\ \underset {P_b} \bullet \underset {e_{k+1}} \rightarrow \underset {P_{b+j}} \bullet$ has an Eulerian path from $P_a$. Such a $b$ exists, by the definition of $U(a,j,I)$.
Our order on edges is determined by our order on vertices. The largest edge which appears in some graph from $U(a,j,I)$ is incident on the largest vertex reachable from $P_a$ using edges of the form $e_i$ for $i\in I$ and $e_{k+1}$. We will show that $P_v$ is the largest such vertex.
The vertices reachable from $P_a$ in $G_b$ using edges of the form $e_i$ for $i\in I$ and $e_{k+1}$ are necessarily of the form $P_\lambda$ for some 
\[ \lambda\in[a-s_{i_1}-\dots-s_{i_\alpha}\interval a+s_{i_1}+\dots+s_{i_\alpha}]\cup[a+j-s_{i_1}-\dots-s_{i_\alpha}\interval a+j+s_{i_1}+\dots+s_{i_\alpha}]. \]

By our assumption of $s\leqslant |a|, |a+j|$, where $s=s_{i_1}+s_{i_2}+\cdots s_{i_\alpha}$, the intervals $[a-s\interval a+s]$ and $[a+j-s\interval a+j+s]$ are of the form

\newcommand{\bracksize}{0.1}
\[\begin{tikzpicture}[baseline,vertex/.style={anchor=base, circle, fill,minimum size=4pt, inner sep=0pt, outer sep=0pt},auto,
                               edge/.style={->,>=latex, shorten > = 5pt,shorten < = 5pt},
position/.style args={#1:#2 from #3}{
    at=(#3.#1), anchor=#1+180, shift=(#1-90:#2)
}]

  \coordinate (vc) {};
  \node[vertex,position = 0:{\nodeDist} from vc] (v0) {};
  \node[vertex,position = -100:{\nodeDist} from vc] (v1L) {};
  \node[vertex,position = 60:{\nodeDist} from vc] (v1R) {};
  \node[vertex,position = 160:{\nodeDist} from vc] (v2R) {};

  \node[coordinate,position=-40:{1*\bracksize} from v1L] (LbrackC) {};
  \node[coordinate,position=-100:{2.5*\bracksize} from LbrackC] (LbrackL1) {};
  \node[coordinate,position=80:{2.5*\bracksize} from LbrackC] (LbrackR1) {};
  \node[coordinate,position=170:{1.8*\bracksize} from LbrackL1] (LbrackL2) {};
  \node[coordinate,position=170:{1.8*\bracksize} from LbrackR1] (LbrackR2) {};

  \node[coordinate,position=-60:{1*\bracksize} from v1R] (RbrackC) {};
  \node[coordinate,position=-120:{2.5*\bracksize} from RbrackC] (RbrackL1) {};
  \node[coordinate,position=60:{2.5*\bracksize} from RbrackC] (RbrackR1) {};
  \node[coordinate,position=150:{1.8*\bracksize} from RbrackL1] (RbrackL2) {};
  \node[coordinate,position=150:{1.8*\bracksize} from RbrackR1] (RbrackR2) {};

  \draw (v0) node[below] {\scriptsize ${0}$};
  \draw (LbrackL1) node[left] {\scriptsize ${\tilde a+s}$};
  \draw (RbrackR2) node[right] {\scriptsize ${\tilde a-s}$};
  \draw (v2R) node[above right] {\scriptsize ${\tilde a}$};

  \draw [dashed, shorten < = 5] (v1R) to[bend left = 27] (v0);
  \draw [dashed, shorten > = 5] (v0) to[bend left = 47] (v1L);
  \draw [<-, shorten < = 3, line width = 0.9] (v1L) to[bend left = 47] (v2R);
  \draw [->, shorten > = 3, line width = 0.9] (v2R) to[bend left = 47] (v1R);

  \draw [line width = 0.9] (LbrackL2) to (LbrackL1);
  \draw [line width = 0.9] (LbrackL1) to (LbrackR1);
  \draw [line width = 0.9] (LbrackR1) to (LbrackR2);

  \draw [line width = 0.9] (RbrackL2) to (RbrackL1);
  \draw [line width = 0.9] (RbrackL1) to (RbrackR1);
  \draw [line width = 0.9] (RbrackR1) to (RbrackR2);
\end{tikzpicture}\]
for $\tilde a=a$ or $a+j$. In particular, $0$ does not lie in the interior of either interval. The maximal vertex of this form is $P_{v_0}$, where
\[|v_0|= \min (|a|-s_{i_1}-s_{i_2}-\dots-s_{i_\alpha},|a+j|-s_{i_1}-s_{i_2}-\dots-s_{i_\alpha})\ .\]
For $P_{v_0}$ to be reachable from $P_a$ in $G_b$, $v_0-a$ must be expressible as a partial sum of the integers $\pm s_{i_1},\pm s_{i_2},\dots, \pm s_{i_\alpha}$ and $j$. The only way this can happen is if
\[v_0=a\pm (s_{i_1}+s_{i_2}+\dots+s_{i_\alpha}) \text{, or } v_0=a+j\pm (s_{i_1}+s_{i_2}+\dots+s_{i_\alpha})\ .\]
If $a'\in [-\lceil\frac{n-1}2\rceil\interval 0]$, then $v_0=a'+s_{i_1}+s_{i_2}+\dots+s_{i_\alpha}$. Otherwise $v_0=a'-s_{i_1}-s_{i_2}-\dots -s_{i_\alpha}$. That is, we have $v=v_0$, and $P_v$ is the largest vertex which may appear in a graph from $U(a,j,I)$ with nonzero degree.

As $G$ is maximal, $v_0\neq a,a+j$, and there exists an Eulerian path from $P_a$ to $P_{a+j}$ on $G_b$, there must be at least $2$ edges incident on $P_{v_0}$ in $G_b$. The maximal edges which can be incident on $P_{v_0}$ are $e_{\pm i_1}$. Hence, in a maximal graph these edges must be incident on $P_{v_0}$.
This proves the claim.

We will now complete the proof of Lemma~\ref{max_connecting_path} by induction on $\alpha$.
Let us assume $a'\in[-\lceil\frac{n-1}2\rceil\interval 0]$; when $a'\in[1\interval \lfloor\frac{n-1}2\rfloor]$ the proof is symmetric. 
In this case, the maximal graph $G\in U(a,j,I)$ must include the edges
\[
\begin{tikzpicture}[baseline,vertex/.style={anchor=base, circle, fill,minimum size=4pt, inner sep=0pt, outer sep=0pt},auto,
                               edge/.style={->,>=latex, shorten > = 5pt,shorten < = 5pt},
position/.style args={#1:#2 from #3}{
    at=(#3.#1), anchor=#1+180, shift=(#1:#2)
}]
  \node[vertex] (v1) {};
  \node[vertex,position = 0:{\nodeDist} from v1] (v2) {};

  \draw (v1) node[below = 13] {$P_{v-s_{i_1}}$};
  \draw (v2) node[below = 13] {$P_{v}$};

  \draw[edge] (v1) to [bend left=10] node[above] {$e_{i_1}$} (v2);
  \draw[edge] (v2) to [bend left=15] node[below] {$e_{-i_1}$} (v1);
\end{tikzpicture}\enskip .
\]
By induction, the maximal graph in $U(a,j,I\setminus\{i_1,-i_1\})$ is
\[
G'=\begin{tikzpicture}[baseline,vertex/.style={anchor=base,circle, fill,minimum size=4pt, inner sep=0pt, outer sep=0pt},auto,
                               edge/.style={->,>=latex, shorten > = 5pt,shorten < = 5pt},
position/.style args={#1:#2 from #3}{
    at=(#3.#1), anchor=#1+180, shift=(#1:#2)
}]

  \node[vertex] (v2) {};
  \node[vertex,position = 170:{\nodeDist} from v1] (v1) {};
  \coordinate[position = -10:{\nodeDist} from v2] (v3);
  \coordinate[position = -10:{\nodeDist} from v3] (v4) {};
  \node[vertex,position = -10:{\nodeDist} from v4] (v5) {};

  \draw (v1) node[below] {$P_{a'}$};
  \draw (v2) node[below] {$P_{a'+s_{i_{\alpha}}}$};
  \draw (v5) node[right] {$P_{v-s_{i_1}}$};

  \draw[edge] (v1) to [bend left=10] node[above] {$e_{i_\alpha}$} (v2);
  \draw[edge] (v2) to [bend left=15] node[below] {$e_{-i_\alpha}$} (v1);

  \draw[edge] (v2) to [bend left=10] node[above] {$e_{i_{\alpha-1}}$} (v3);
  \draw[edge] (v3) to [bend left=15] node[below] {$e_{-i_{\alpha+1}}$} (v2);

  \draw[dashed,shorten > = 5pt, shorten < = 5pt] (v3) -- (v4);

  \draw[edge] (v4) to [bend left=10] node[above] {$e_{i_2}$} (v5);
  \draw[edge] (v5) to [bend left=15] node[below] {$e_{-i_2}$} (v4);
\end{tikzpicture}\ .
\]
We see that $G'\ \CMamalg\ \underset {P_{v-s_{i_1}}} \bullet \underset {e_{-i_1}} {\overset {e_{i_1}} \rightleftarrows} \underset {P_v} \bullet$ lies in $U(a,j,I)$, and as no graph from $U(a,j,I)$ can be larger than this graph, it must be maximal.

\end{proof}

\section{Conclusion of the proof of Proposition \ref{max_graph_decomp}}\label{sect.maximal_2}

Let $G=G'\ \CMamalg\ H_t$ be the maximal graph from $U(a,j)$. By Lemma~\ref{max_subgraphs} no $\tilde G\in U(a,j)$ can contain $H_{t'}$ for $t' > t$. 
As $G\in U(a,j)$, there exists $b\in \Z/n\Z$ such that 
\[ G_b=G\ \CMamalg\ \underset{P_b}\bullet\underset{e_{k+1}}\rightarrow\underset{P_{b+j}}\bullet\]
admits an Eulerian path from $P_a$ to $P_{a+j}$. For $t\in\{0,1,\dots,r\}$ define
\[I_t^+=\{t+1,t+2,\dots, r\}\ ,\]
\[I_t^-=\{-t-1,-t-2,\dots,-r\}\ ,\]
\[I_t=I_t^+\cup I_t^- .\]
Furthermore, let
\[R_t(a)=\{a+\overline p\ |\ \text{ $\overline p$ is a partial sum of $s_i,i\in I_t$ and $j$}\}\]
be the set of indices $v$ such that $P_v$ is reachable from $P_a$ using only edges from $G'$, i.e.,
a subset of $\{e_i\ |\ i\in I_t\} \cup \{e_{k+1}\}$. Recall that 
\begin{align*}
(s_1,s_2,s_3,\dots,s_k)&=(s_1,s_2,s_3,\dots,s_r,s_{-1},s_{-2},\dots,s_{-r})\\
&=\left(\lceil \frac {1} {2} \rceil,\lceil \frac {2} {2} \rceil,\lceil \frac {3} {2} \rceil,\dots,\lceil \frac {r} {2} \rceil,-\lceil \frac {1} {2} \rceil,-\lceil \frac {2} {2} \rceil,\dots,-\lceil \frac {r} {2} \rceil\right)\\
&=(1,1,2,\dots,\lceil\frac r 2 \rceil,-1,-1,\dots,-\lceil \frac r 2 \rceil).
\end{align*}
If we set $s_{k+1} = j$, then 
% $\displaystyle 
$R_t(a)=\{ a + \sum_{\lambda\in Q} s_\lambda \; | \; \text{$Q$ is a subset of $I_t\cup\{k+1\}$}\}$.

\begin{lemma}\label{reachable_H}
Assume $n>r$. There exists $G\in U(a,j)$ containing $H_t$ (for $t>0$) if and only if $R_t(a)\cap \supp(H_t)\neq\emptyset$.
\end{lemma}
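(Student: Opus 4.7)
The two directions are handled separately.

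Necessity: assume $G = H_t\ \CMamalg\ G' \in U(a,j)$, with $G'$ having edge set $\{e_\lambda : \lambda\in I_t\}$, and fix $b\in\Z/n\Z$ such that $G_b$ admits an Eulerian path $w$ from $P_a$. Since $t>0$, $w$ must traverse the edges of $H_t$ and therefore visits $\supp(H_t)$; let $P_v$ be the first vertex of $\supp(H_t)$ encountered along $w$. Every edge of $w$ preceding the arrival at $P_v$ has an endpoint outside $\supp(H_t)$, hence cannot lie in $H_t$, so it belongs to $G'\cup\{e_{k+1}\}$. Since each such edge $e_\lambda$ is traversed from source to target, contributing $+s_\lambda$ to the displacement (with the convention $s_{k+1}:=j$), we obtain $v-a=\sum_{\lambda\in Q}s_\lambda$ for $Q\subseteq I_t\cup\{k+1\}$ the set of labels of those preceding edges. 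Thus $v\in R_t(a)\cap\supp(H_t)$; the edge case $P_a\in\supp(H_t)$ is covered by $v=a$, $Q=\emptyset$.

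Sufficiency: suppose $v\in R_t(a)\cap\supp(H_t)$ and fix $Q\subseteq I_t\cup\{k+1\}$ with $v-a=\sum_{\lambda\in Q}s_\lambda$. The strategy is to build an Eulerian path on a suitable $G_b$ as a concatenation $W_1\cdot T\cdot W_2$ of three pieces. Here $W_1$ is a walk from $P_a$ to $P_v$ whose edges are precisely $\{e_\lambda : \lambda\in Q\}$ in some order, the displacements summing to $v-a$. The middle piece $T$ is a closed walk at $P_v$ that traverses every edge of $H_t$ exactly once; this exists because $H_t$ is a wedge of $t$ two-edge petals at $P_0$, so if $v=0$ one cycles through the petals in any order, and if $v\neq 0$ one first follows $P_v\to P_0$, then cycles through the remaining $t-1$ petals at $P_0$, and finally returns via $P_0\to P_v$. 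Lastly, $W_2$ is a walk from $P_v$ to $P_{a+j}$ whose edges are $\{e_\lambda : \lambda\in (I_t\cup\{k+1\})\setminus Q\}$; its total displacement is automatically $(a+j)-v$ since $\sum_{\lambda\in I_t\cup\{k+1\}}s_\lambda=j$ (because $\sum_{\lambda\in I_t}s_\lambda=0$). The value of $b$ is the source of $e_{k+1}$ along whichever of $W_1, W_2$ contains the label $k+1$.

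The main obstacle is to ensure the resulting graph $G_b$ lies in $U(a,j)$, i.e., has no repeated edges. Two distinct labels $\lambda\neq\mu$ in $I_t\cup\{k+1\}$ can produce coinciding directed edges only when $s_\lambda=s_\mu$---which occurs for the pairs $\{2i{-}1,2i\}$ and $\{-(2i{-}1),-2i\}$ and for any $\lambda$ with $s_\lambda=j$ colliding with $k+1$---and when these edges are placed at the same source vertex; a $G'$-edge might also coincide with one of $H_t$. Both conflicts can be eliminated by a suitable reordering of the edges within $W_1$ and $W_2$, using that the hypothesis $n>r$ provides enough room in $\Z/n\Z$ to separate the relevant source vertices. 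Once this is done, Theorem~\ref{eulerian_unic} delivers the Eulerian path: the degree conditions hold automatically from the walk structure (net $+1$ in $\outdeg-\indeg$ at $P_a$, net $-1$ at $P_{a+j}$, cancellations at $P_v$ and elsewhere, with $H_t$ itself balanced at every vertex), and the concatenated walk witnesses connectivity.
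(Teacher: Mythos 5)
Your necessity direction is essentially identical to the paper's: track the first arrival of the Eulerian path at $\supp(H_t)$, observe that all preceding edges have labels in $I_t\cup\{k+1\}$, and read off the displacement sum. That part is fine.

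Your sufficiency direction takes a genuinely different route from the paper and has a real gap. The paper chooses a \emph{minimal} $I'\subseteq I_t\cup\{k+1\}$ with $a+\sum_{i\in I'}s_i\in\supp(H_t)$ (minimality guaranteeing that at most one of $i,-i$ lies in $I'\setminus\{k+1\}$), builds $G$ as $H_t$ glued to a ``there-and-back'' path on the labels $I'\cup(-I')$, and then attaches the remaining labels in pairs $\{e_\ell,e_{-\ell}\}$ as two-edge petals at vertices of $\supp(G')$, checking explicitly at each attachment that a placement avoiding the unique possible clash (an already-placed $e_{\ell'}$ with $s_{\ell'}=s_\ell$) exists because $|\supp(G')|\geqslant 2$. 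That case analysis \emph{is} the content of the sufficiency direction. In your proposal you instead fix an arbitrary $Q$ with the right displacement sum and decompose the path as $W_1\cdot T\cdot W_2$, but the step ``both conflicts can be eliminated by a suitable reordering of the edges within $W_1$ and $W_2$, using that the hypothesis $n>r$ provides enough room'' is simply asserted. It is not obvious, and in fact for some orderings the walk does produce repeated edges (for example, if $W_2$ carries two labels with displacement $-d$ and one with $+d$, the orderings $(-d,+d,-d)$ and $(-d,-d,+d)$ with $j=d$ can both produce coincidences with each other or with $W_1$). Whether one can always escape depends both on the choice of ordering \emph{and} on the choice of $Q$ — you never argue the latter can be adjusted, whereas the paper's minimality of $I'$ is exactly what controls this. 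Since ``no repeated edges'' is condition (iii) in Definition~\ref{U_gen_def} and is what the hard part of the lemma is protecting, this assertion cannot be waved off; as written, the sufficiency direction is incomplete.
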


Here
\[
\supp(H_t)=\begin{cases}
\{-\lceil \frac t 2 \rceil,-\lceil \frac t 2 \rceil+1,\dots,\lfloor \frac t 2 \rfloor\}\ =
[-\lceil \frac t 2 \rceil\interval \lfloor \frac t 2 \rfloor]\ , &\text{if $t>0$}\\
\emptyset\ , &\text{if $t=0$.}
\end{cases}
\]
denotes the support of $H_t$, i.e., the set of subscripts $i$ such that $P_i$ is adjacent to at least one edge from $H_t$. 
For example, $\supp(H_0)=\emptyset$,
$\supp(H_1)=\{-1,0\}$,
$\supp(H_2)=\{-1,0,1\}$,
$\supp(H_3)=\{-2,-1,0,1\}$, etc. Note also that $I_t = \{\pm 1,\pm 2,\dots,\pm r\} \setminus \supp(H_t)$.

\begin{proof}
Suppose that there exists $G\in U(a,j)$ containing $H_t$ with $t>0$. Then there is an Eulerian path $(e_{i_1},e_{i_2},\dots,e_{i_{k+1}})$ from $P_a$ in $G_b$ for some $b\in \Z/n\Z$. As this path traverses $G_b$, it must traverse $H_t$. Let $P_v$ be the first vertex on this path with $v\in\supp(H_t)$ and $m$ be the first integer with $\tar(e_{i_m})=P_v$ (with $m=0$ if $P_a=P_v$). Then, setting $s_{k+1}=j$, we have $v=a+\sum_{h=1}^m s_{i_h}$. Each $e_{i_h}$ with $1\leqslant h\leqslant m$ has $\src(e_{i_h})\notin H_t$ and so $i_h\in I_t\cup\{k+1\}$ by the definition of $H_t$. Thus $v\in R_t(a)$ as required.

Conversely, suppose that $R_t(a)\cap \supp(H_t)\neq \emptyset$ and let $I'$ be a minimal subset of $I_t\cup\{k+1\}$ such that $v=a+\sum_{i\in I'}s_i$ lies in $\supp(H_t)$. Note that at most one of $i$ and $-i$ can lie in $I'\setminus\{k+1\}$ for any $i$ by the minimality of $I'$ (otherwise we can remove both from $I'$). Now consider two cases.

\smallskip
Case 1. $k + 1 \not \in I'$. We claim that there exists a graph $G$ with edges $e_{\pm 1}, \ldots, e_{\pm r}$ containing $H_t$ such that
$G$ has no repeated edges and $G$ has an Eulerian path starting at $P_a$. If we can prove this claim, then appending $e_{k+1}$ to 
this Eulerian path at the end, we obtain an Eulerian path for $G_{a}$ starting at $P_a$. Thus $G \in U(a, j)$, and the proof in Case 1 will be complete.

To prove the claim, 
% Let $i_1,i_2,\dots,i_m$ be an enumeration of $I'$, with $i_1=k+1$ if $k+1\in I'$. Then there exists a graph 
set $G'=H_t\ \CMamalg\ G''$ where $G''$ is a path from $P_a$ to $P_v$ with edge labels $e_{i}$, $i \in I'$ 
and back to $P_{a}$ with edge labels $e_{-i}$, $i \in I'$.
% $e_{i_1},\dots,e_{i_m},e_{-i_m},\dots,e_{-i_2}$ if $k+1\in I'$, or a path from $P_a$ to $P_v$ and
% back to $P_a$ with edge labels $e_{i_1},\dots,e_{i_m},e_{-i_m},\dots, e_{-i_1}$ if $k+1\notin I'$. 
Since $H_t$ is an Euler circuit, $G'$ admits an Eulerian path. Note that
by our construction, edges in $G'$ come in pairs $e_{\ell}$ and $e_{-\ell}$ so that $\src(e_{\ell}) = \tar(e_{-\ell})$ for each $\ell$.
If $G'$ has edges for all the labels $e_{\pm 1},\dots,e_{\pm r}$, then we can set $G = G'$ and our proof is complete. 
If not, then we can construct a $G$ from $G'$ recursively, by attaching missing edges in pairs, $e_{\ell}$ and $e_{-\ell}$, as follows
\[
\begin{tikzpicture}[baseline,vertex/.style={anchor=base, circle, fill,minimum size=4pt, inner sep=0pt, outer sep=0pt},auto,
                               edge/.style={->,>=latex, shorten > = 5pt,shorten < = 5pt},
position/.style args={#1:#2 from #3}{
    at=(#3.#1), anchor=#1+180, shift=(#1:#2)
}]
  \node[vertex] (v1) {};
  \node[vertex,position = 0:{\nodeDist} from v1] (v2) {};

  \draw (v1) node[below = 13] {$P_{c}$};
  \draw (v2) node[below = 13] {$P_{c + s_{\ell}}$};

  \draw[edge] (v1) to [bend left=10] node[above] {$e_{\ell}$} (v2);
  \draw[edge] (v2) to [bend left=15] node[below] {$e_{-\ell}$} (v1);
\end{tikzpicture} \; \; .
\]
Here $c \in \supp(G')$. Note that for a given $d\in \Z/n\Z$, there are at most two $\ell\in\{\pm 1,\dots,\pm r\}$ such that $s_\ell=d$. 
If we want to add $e_\ell$ and $e_{-\ell}$ to $G'$, and $e_{\ell'}$ and $e_{-\ell'}$ with $s_{\ell} = s_{\ell'}$ are not present 
in $G'$, then we can place $e_\ell$ and $e_{-\ell}$ at any $P_c$, where $c \in \supp(G')$, as above.
This way the extended graph will have no repeated edges and will retain an Eulerian circuit.
If $e_{\ell'}$ and $e_{-\ell'}$ with $s_{\ell'} = s_{\ell}$ are present in $G'$, then the same will be true if we choose
$c \in \supp(G')$ so that $P_c \neq \src(e_{\ell'})$. This completes the proof of the claim and thus of Lemma~\ref{reachable_H} in Case 1.

\smallskip
Case 2. $k + 1 \in I'$. Applying the claim in Case 1 with $a$ replaced by $a + j$, we see that
there exists a graph $G$ with edges $e_{\pm 1}, \ldots, e_{\pm r}$ containing $H_t$ such that
$G$ has no repeated edges and $G$ has an Eulerian path starting at $P_{a + j}$.  Appending $e_{k+1}$ to 
this Eulerian path at the beginning, we obtain an Eulerian path for $G_{a}$ starting at $P_a$. 
Thus $G \in U(a, j)$ and $H_t$ is contained in $G$. This completes the proof in Case 2. 
\end{proof}

\begin{lemma}\label{max_t}
Let $0\leqslant t\leqslant r$ be the largest integer such that there exists $G\in U(a,j)$ containing $H_t$. Set $s=\sum_{i\in I_t^+}s_i$. Then

\smallskip
(a) $s\leqslant |a|, |a+j|$.

\smallskip
(b) If $t>0$ and $v\in R_t(a)\cap \supp(H_t)$ then either every $s_i$ for $i \in I_t^+$ or every $s_i$ for $i \in I_t^-$  
must appear as a summand of $v$.
\end{lemma}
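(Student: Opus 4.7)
My plan is to prove both parts by contradiction, using the maximality of $t$ through Lemma~\ref{reachable_H}: when $t<r$, maximality yields $R_{t+1}(a)\cap\supp(H_{t+1})=\emptyset$.

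For part (b), assume $t>0$ and write any $v\in R_t(a)\cap\supp(H_t)$ as $v=a+\sum_{i\in I'}s_i+\varepsilon j$ with $I'\subseteq I_t$ and $\varepsilon\in\{0,1\}$ recording whether $k+1$ was used in the partial sum. Let $I'_\pm=I'\cap I_t^\pm$ and suppose for contradiction that $I'_+\subsetneq I_t^+$ and $I'_-\subsetneq I_t^-$. The key identity is $s_{t+1}+s_{-t-1}=0$. If both $t+1\in I'_+$ and $-t-1\in I'_-$, then removing them from $I'$ preserves the partial sum but produces $I''\subseteq I_{t+1}$, giving $v\in R_{t+1}(a)\cap\supp(H_t)\subseteq R_{t+1}(a)\cap\supp(H_{t+1})$ and contradicting maximality. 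If neither is in $I'$, then already $I'\subseteq I_{t+1}$, and the same contradiction follows. For the two asymmetric cases, I fix witnesses $i_\pm\in I_t^\pm\setminus I'_\pm$ and perform a swap: e.g.\ if $t+1\in I'_+$ but $-t-1\notin I'_-$, replace $s_{t+1}$ in the sum by $s_{i_+}$ for some $i_+\in I_{t+1}^+$ (which exists because $t+1\leqslant r-1$, forcing $t+2\in I_t^+$). By the parity identity $s_{t+2}\in\{s_{t+1},s_{t+1}+1\}$, the shift $v\mapsto v+(s_{i_+}-s_{t+1})$ can be chosen small enough to keep the result inside $\supp(H_{t+1})$, again forcing a contradiction.

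For part (a), if $t=r$ then $s=0$ and the inequality is trivial; assume $t<r$ and, by symmetry, focus on showing $s\leqslant|a|$. The case $a=0$ is easy: if $s>0$, then $0\in R_{t+1}(0)\cap\supp(H_{t+1})$ via the empty partial sum, contradicting maximality and forcing $s=0$. For $a\neq 0$, suppose for contradiction that $s>|a|$, and WLOG $a>0$, so $0<a<s$. The idea is to construct a partial sum $p^{*}$ of $\{s_i:i\in I_{t+1}\}$ (together with an optional $j$) such that $a+p^{*}\in\supp(H_{t+1})$, producing $w=a+p^{*}\in R_{t+1}(a)\cap\supp(H_{t+1})$. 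Since the values $s_{t+2},s_{t+3},\dots,s_r$ form a sequence in which consecutive terms differ by at most $1$ (each value appears at most twice), a straightforward induction shows that the signed partial sums of $\{\pm s_i:i\in I_{t+1}^+\}$ cover every integer of the interval $[-(s-s_{t+1}),s-s_{t+1}]$ except possibly the two extremes. Because $a<s$ and $\supp(H_{t+1})$ contains a window of width $t+1$ around $0$, there is always room to choose $p^{*}$ in this set of achievable partial sums with $a+p^{*}\in\supp(H_{t+1})$, yielding the desired contradiction.

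The main obstacles are the asymmetric cases in part (b), where the swapping element must be selected with care so that the shifted partial sum remains in $\supp(H_{t+1})$, and the verification in part (a) that the gaps in the partial sums of $\{s_i:i\in I_{t+1}\}$ are not so large as to block reaching $\supp(H_{t+1})$. Both difficulties are resolved by exploiting the parity structure $s_i=\lceil i/2\rceil$, which ensures that consecutive entries of $s_{t+1},s_{t+2},\dots,s_r$ differ by at most $1$ and provides exactly the flexibility the argument requires.
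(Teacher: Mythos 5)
Your overall strategy — deriving a contradiction with maximality of $t$ via Lemma~\ref{reachable_H} — is the right one and matches the paper's opening move in part (b), but both of your core technical steps contain real gaps.

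For part (a), your density claim is false: the signed partial sums of $\{s_i : i \in I_{t+1}^+\}$ do \emph{not} cover the interior of $[-(s-s_{t+1}),\,s-s_{t+1}]$ in general, because the sequence $s_{t+2},\dots,s_r$ starts at $s_{t+2}=\lceil (t+2)/2 \rceil\geq 2$ whenever $t\geq 1$. For instance with $\{2,3\}$ the achievable signed sums are $\{0,\pm1,\pm2,\pm3,\pm4,\pm5\}\setminus\{\pm4\}$, so $\pm4$ is missed; with $\{2,2\}$ all odd values are missed. The ``consecutive terms differ by at most $1$'' observation does not control the initial jump from $0$ to $s_{t+2}$. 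The paper sidesteps this by not insisting on reaching $\supp(H_{t+1})$ with partial sums of $I_{t+1}$: it takes $\lambda\in\{t+1,\dots,r\}$ minimal with $s_r+\cdots+s_\lambda>a'$ and checks that the \emph{single specific} partial sum $a'-(s_r+\cdots+s_{\lambda+1})$ lands in $[0,s_\lambda)\subset\supp(H_\lambda)$. Since $\lambda>t$ this already contradicts maximality; there is no need to land in $\supp(H_{t+1})$ specifically, which is what forces your density argument.

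For part (b), your cases 1 and 2 are correct and coincide with the paper's initial observation that $\epsilon_{t+1}\neq 0$. The asymmetric cases 3 and 4 are where the argument breaks. First, the swap element $i_+$ must lie in $I_{t+1}^+\setminus I'_+$, and nothing guarantees $t+2\notin I'_+$, so the shift $s_{i_+}-s_{t+1}$ need not be bounded by $1$. Second, even with $i_+=t+2$, the shift has a fixed \emph{sign} (nonnegative when you swap on the positive side, nonpositive on the negative side), but $\supp(H_{t+1})$ extends $\supp(H_t)$ by one unit in only one direction depending on the parity of $t$; e.g.\ in your case~4 with $t$ odd the shift is $-1$ while $\supp(H_{t+1})$ extends only to the right, and $v-1$ can fall below $-\lceil(t+1)/2\rceil$. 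The paper avoids all of this by making part (b) a corollary of part (a): it first writes $v=a'+s'$ with $s'=\sum\epsilon_i s_i$, $\epsilon_i\in\{-1,0,1\}$, then shows that if the $\epsilon_i$ are not all equal and nonzero, one gets $|s'|<s-\lceil t/2\rceil$, whence $|a'+s'|\geq|a'|-|s'|\geq s-|s'|>\lceil t/2\rceil$ by (a), so $v\notin\supp(H_t)$. If you want to preserve your structure, you would at minimum need to replace the swap by the paper's quantitative estimate, or work with $\supp(H_\lambda)$ for a suitable $\lambda>t$ rather than insisting on $\lambda=t+1$.
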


% Note that (b) implies that one of $v-a$ or $v-(a+j)$ is equal to $s$ or $-s$. We require the stronger statement of how this sum is presented.

\begin{proof} Throughout the proof, $a'$ will denote either $a$ or $a + j$.

\smallskip
(a) We will assume that $0  < a' \leqslant \lfloor \frac{n-1}{2} \rfloor$; the case where  
$- \lceil \frac{n-1}{2} \rceil \leqslant a' \leqslant 0$ is symmetrical. We argue by contradiction. Suppose $s_r + \ldots + s_{t + 1} > a'$.
Let $t + 1 \leqslant \lambda \leqslant r$ be the smallest integer such that 
\begin{equation} \label{e.reachable}
s_r + s_{r-1} + \ldots + s_{\lambda+ 1} + s_{\lambda} > a'.
\end{equation}
We may assume without loss of generality that $\lambda \leqslant r-1$. 
Indeed, suppose $\lambda = r$. Then $0 \leqslant a' < s_r = \lceil \frac{r}{2} \rceil$ and consequently, $a' \in \supp(H_r)$. In this case
$t = r$, the sum $s_r + \ldots + s_{t + 1}$ is empty, and there is nothing to prove. 

   From now on we will assume that $\lambda \leqslant r -1$. By our choice of $\lambda$, $a' \geqslant s_r + \ldots + s_{\lambda + 1}$ and thus
\[ 0\leqslant a' - (s_r + s_{r-1} +  \ldots + s_{\lambda + 1}) = a' - (s_r + s_{r-1} + \ldots + s_{\lambda + 1} + s_{\lambda}) + s_{\lambda} < 
0 + s_{\lambda} = \lceil \frac{\lambda}{2} \rceil. \]
This shows that $a' - (s_r + \ldots + s_{\lambda + 1}) \in R_{\lambda}(a)  \cap \supp( H_{\lambda})$.  
Lemma~\ref{reachable_H} now tells us that there exists a $G \in U(a, j)$ containing $H_{\lambda}$.
Since $\lambda \geqslant t + 1$, this contradicts our choice of $t$.

\smallskip
(b) Suppose $v = a' + s'$ lies in $\supp(H_t)$ for some $s' = \epsilon_r s_r + \epsilon_{r-1} s_{r-1} + \ldots + \epsilon_{t + 1} s_{t + 1}$,
where $\epsilon_i  \in \{ - 1, 0, 1 \}$. We want to show that either all $\epsilon_i$ are $1$, or all $\epsilon_i$ are $-1$, for $i=t+1,\dots,r$.
First note that $\epsilon_{t + 1} \neq 0$. Indeed, otherwise we would have
\[ v \in R_{t + 1}(a) \cap \supp(H_t) \subset R_{t+1}(a) \cap \supp(H_{t + 1}). \]
By Lemma~\ref{reachable_H}, this contradicts the maximality of $t$.

It remains to show that if $\epsilon_{t + 1} \neq 0$ and $\epsilon_i\neq \epsilon_{t+1}$ for some $t+1<i\leqslant r$, then
\begin{equation} \label{e.ceil} |s'| < s - \lceil \dfrac{t}{2} \rceil. \end{equation}  
If we can prove this inequality, then $v$ cannot lie in $\supp(H_t)$ because
\[ |a' + s'| \geqslant |a'| - |s'| \geqslant |s| - |s'| > \lceil \dfrac{t}{2} \rceil, \]
and part (b) will follows.
We we will prove the inequality~\eqref{e.ceil} in two steps. 

Step 1. First we will show that~\eqref{e.ceil} holds if $\epsilon_{\lambda} = 0$ for any $\lambda = t+2, \ldots, r$.
(Recall that we know that $\epsilon_{t +1} \neq 0$.) Indeed,
\[  |s'| \leqslant \sum_{i \neq \lambda} |\epsilon_i| s_i   
 \,   \leqslant \, s - s_{\lambda}  \,  = \,  s -  \lceil \frac{\lambda}{2} \rceil < s - \lceil \frac{t}{2} \rceil, \]
where the last inequality follows from $\lambda \geqslant t + 2$.

Step 2. We are now ready to complete the proof of~\eqref{e.ceil}. By Step 1 we
may assume that $\epsilon_{\lambda} = \pm 1$ for every $\lambda = t + 1, t + 2 \ldots, r$. In this case
\[ |s'| = \sum_{\lambda \in A} s_{\lambda} - \sum_{\mu \not \in A} s_{\mu} \]
 for some proper subset $\emptyset \neq A \subsetneq \{ t+1, t+2, \ldots, r \}$. Here $A = \{ \lambda \, | \, s_{\lambda} = 1 \}$
or $A = \{ \lambda \, | \, s_{\lambda} = -1 \}$. Note that $A$ is a proper subset of $\{ t + 1, t + 2, \ldots, r \}$
because we are assuming that $\epsilon_i\neq \epsilon_{t+1}$ for some $t+1<i\leqslant r$. 
Thus $\displaystyle |s'| < \sum_{\lambda \in A} s_{\lambda}$. Now for any $\mu \in \{ t+1, t + 2, \ldots, r \} \setminus A$, we have
\[ |s'| < \sum_{\lambda \in A} s_{\lambda} \leqslant s - s_{\mu} = 
s - \lceil \frac{\mu}{2} \rceil \leqslant s - \lceil \frac{t}{2} \rceil . \]
This completes the proof of~\eqref{e.ceil} and thus of part (b).
\end{proof}

\begin{lemma}\label{subgraph_connected}
Let $0\leqslant t\leqslant r$ be the largest integer such that there exists a graph in $U(a,j)$ containing $H_t$.
Denote the maximal graph in $(U(a, j)$ by $G$. (Recall that by Lemma~\ref{max_subgraphs}, $G$ contains $H_t$.) 
Let $G'$ be the graph with vertices $P_v$, $v \in \mathbb Z / n \mathbb Z$ 
obtained from $G$ by removing the edges $e_{\pm 1}, \ldots, e_{\pm t}$, i.e., by removing the edges which occur in $H_t$.
Then $G'\in U(a,j,I_t)$.
\end{lemma}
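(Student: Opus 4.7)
Conditions (i), (ii), and (iii) of Definition \ref{U_gen_def} follow immediately from the construction of $G'$: its edges are exactly $\{e_\ell : \ell \in I_t\}$, each retains the source and target it had in $G$, and $G'$ has no repeated edges because $G$ does not. The content of the lemma is thus condition (iv): producing some $b \in \Z/n\Z$ such that $G'_b$ admits an Eulerian path from $P_a$.

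My plan for (iv) is to take $b \in \Z/n\Z$ for which $G_b$ has an Eulerian path from $P_a$ (which exists since $G \in U(a, j)$) and then to verify via Theorem \ref{eulerian_unic} that $G'_b$ also admits an Eulerian path from $P_a$. The degree conditions \eqref{eulerian_degrees} transfer from $G_b$ to $G'_b$ for free: each 2-cycle of $H_t$ contributes $+1$ to both $\indeg$ and $\outdeg$ at each of its two endpoints, so $\outdeg_{G'_b}(v)-\indeg_{G'_b}(v) = \outdeg_{G_b}(v)-\indeg_{G_b}(v)$ for every vertex $v$, and the degree balance required by $G_b$'s Eulerian path carries over to $G'_b$.

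The substantive step, and the main obstacle, is the connectivity of $G'_b$ on its support. I would argue it by contradiction, using the maximality of $G$ in $U(a,j)$: if $G'_b$ were disconnected on its support, then since $G_b$ is connected and all $H_t$-edges touch only vertices of $\supp(H_t)$, every connected component of $G'_b$ would have to meet $\supp(H_t)$. I would then locate the lex-smallest pair $(e_i, e_{-i})$ of edges whose sources lie in a given smaller component $D$, and relocate them onto a vertex $P_\alpha$ lying in both $\supp(H_t)$ and a different component of $G'_b$. The resulting graph $\tilde G$ has strictly fewer components in $\tilde G'$, and because $P_\alpha$ is higher than $D$'s vertices in the vertex order of Section~\ref{specialization}, the relocation strictly increases the lex order, giving $\tilde G \succ G$ and contradicting the maximality of $G$.

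The delicate part of this argument is ensuring that the relocation can always be performed without creating repeated edges and that the resulting $\tilde G$ still lies in $U(a,j)$. I would verify these by an incremental construction analogous to the one used in the proof of Lemma~\ref{max_connecting_path}: the 2-cycle of the relocated pair is attached to $P_\alpha$ and spliced into an Eulerian walk on the remaining edges of $\tilde G$ to produce an Eulerian walk on $\tilde G_{\tilde b}$ for an appropriate $\tilde b$, after which applying Theorem \ref{eulerian_unic} delivers the required Eulerian path on $G'_b$.
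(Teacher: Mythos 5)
Your reductions are in the right spirit: conditions (i)--(iii) do transfer trivially, and your observation that the degree conditions of Theorem~\ref{eulerian_unic} pass from $G_b$ to $G'_b$ (because $H_t$ is a union of $2$-cycles through $P_0$, hence balanced at every vertex) is exactly the paper's first step. So the whole burden is on connectivity, as you say.

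The connectivity argument you propose, however, has a genuine gap. You want to find, in a component $D$ of $G'_b$ not containing $P_a$, \emph{both} edges $e_i$ and $e_{-i}$ for some $i$, and relocate that pair onto a single vertex of $\supp(H_t)$ so the degree balance is preserved. But there is no reason such a matched pair should lie in $D$. In fact, it essentially never does: the paper's own argument shows (using Lemma~\ref{max_t}(b), applied to a path from $P_a$ into $\supp(H_t)$ inside the component containing $P_a$) that the component containing $P_a$ already contains \emph{every} edge $e_\lambda$ with $\lambda \in I_t^+$, or every edge with $\lambda \in I_t^-$. So the other component consists exclusively of edges with indices of one sign (together with possibly $e_{k+1}$), and no pair $(e_i, e_{-i})$ is available to move. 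Without such a pair you cannot perform a degree-preserving relocation, and moving a single edge of $D$ destroys the degree balance required for an Eulerian path. There is also a secondary issue even if the pair existed: the lex comparison is on the full sequence of source vertices, and ``$P_\alpha$ is higher than $D$'s vertices'' does not by itself guarantee that the relocated graph dominates $G$, because the relocated pair need not be the lex-largest point of disagreement.

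The paper sidesteps the maximality-of-$G$ argument entirely at this step. After establishing the degree balance, it derives connectivity by pure arithmetic: assuming $G'_b$ is disconnected forces a nonempty partial sum of $\{s_{-t-1}, \dots, s_{-r}\} \cup \{j\}$ to vanish modulo $n$, and this in turn lets one write the first vertex $v \in R_t(a) \cap \supp(H_t)$ reached from $P_a$ as a partial sum omitting some $s_i$ with $i \in I_t^+$, contradicting Lemma~\ref{max_t}(b). That arithmetic rigidity, not a graph-modification argument, is the mechanism here. Your instinct that maximality of $G$ must be the engine is natural, but the place it actually enters is earlier (Lemma~\ref{max_subgraphs} and the definition of $t$), and it enters this lemma only indirectly through Lemma~\ref{max_t}.
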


\begin{proof}
If $t=0$ then $G=G'\in U(a,j)=U(a,j,I_0)$. So we may assume $t>0$.
As $G\in U(a,j)$ there exists $b\in \Z/n\Z$ such that 
\[ G_b={G\ \CMamalg\ \underset{P_b}\bullet\underset{e_{k+1}}\rightarrow\underset {P_{b+j}}\bullet} \]
admits an Eulerian path from $P_a$ to $P_{a+j}$. Let $G_b'$ be the graph obtained from $G'$ by adding the edge $e_{k+1}$ originating at $P_b$.
Equivalently, $G'_b$ is the graph obtained by removing $e_{\pm 1}, \ldots, e_{\pm t}$ from $G_b$.
It suffices to show that there exists an Eulerian path from $P_a$ to $P_{a+j}$ in $G'_b$.

As there exists an Eulerian path on $G_b$ from $P_a$ to $P_{a+j}$, by Theorem \ref{eulerian_unic}(a), the degree of the vertices in $G_b$ must satisfy
\begin{equation}\label{degrees}
\begin{split}
\outdeg(P_v)&=\indeg(P_v)\ , \text{ for all $v\neq a,a+j$}\ ,\\
\outdeg(P_a)&=\indeg(P_a)+1\ , \text{ if $j\neq 0$}\ ,\\
\outdeg(P_{a+j})&=\indeg(P_{a+j})-1\ , \text{ if $j\neq 0$}\ ,\\
\outdeg(P_a)&=\indeg(P_a)\ , \text{ if $j=0$}\ .
\end{split}
\end{equation}
As every vertex has its indegree equal to its outdegree in $H_t$, the equation (\ref{degrees}) also holds in $G'_b$. We claim that $G'_b$ 
is connected. If we can prove this claim, then Theorem \ref{eulerian_unic}(a) will tell us that $G_b'$ admits an Eulerian path from $P_a$ 
to $P_{a+j}$ and consequently, $G'\in U(a,j,I_t)$, as desired.

To prove the claim, we will argue by contradiction. Assume $G_b'$ is not connected. Then there exists a decomposition of $G_b'$ as $G_b'=\Gamma^1\ \CMamalg\ \Gamma^2$ for two non-empty graphs $\Gamma^1$ and $\Gamma^2$, such that
% where $\Gamma^1$ is maximally connected in $H_t$, 
$P_a$ has nonzero degree in $\Gamma^1$, $\Gamma^1$ is connected, and there is no vertex having nonzero degree in both $\Gamma^1$ and $\Gamma^2$. Thus for any vertex $P_v$, either all edges incident on $P_v$ in $G_b'$ are present in $\Gamma^1$ or none of them are (and similarly for $\Gamma^2$).

The sum of the indegrees of the vertices in $\Gamma^1$ equals the sum of the outdegrees. By (\ref{degrees}) we must then have $P_{a+j}$ lies in $\Gamma^1$ and so every vertex having nonzero degree in $\Gamma^2$ has its indegree equal to its outdegree. There is a path from $P_{a}$ to some $P_v\in H_t$ in $G_b$. By Lemma \ref{max_t} (b), this path necessarily uses all edges of the form $e_\lambda$ for $\lambda\in I_t^+$, or all edges of the form $e_\lambda$ for $\lambda\in I_t^-$. Thus $\Gamma^1$ contains either every edge $e_\lambda$ for $\lambda\in I_t^+$, or every edge $e_\lambda$ for $\lambda\in I_t^-$. We assume that $\Gamma^1$ contains every edge $e_\lambda$ for $\lambda\in I_t^+$, the proof in the other case is symmetric.

The set of edges of $\Gamma^2$ is a subset of $\{e_{-t-1},e_{-t-2},\dots,e_{-r}\}\cup\{e_{k+1}\}$. Let $P_{v'}$ be a vertex having nonzero degree in $\Gamma^2$. As every vertex has its indegree equal to its outdegree in $\Gamma^2$, there exists a closed circuit in $\Gamma^2$ originating (and terminating) at
$P_{v'}$. This implies that a nonempty partial sum of $\{s_{-t-1},s_{-t-2},\dots,s_{-r}\}\cup \{j\}$ is zero modulo $n$. By Lemma~\ref{max_t}(a), $|s_{-t-1}+s_{-t-2}+\dots+s_{-r}|\leqslant |a|<n$. Thus our partial sum must be $0=j-(s_{\lambda_1}+s_{\lambda_2}+\dots+s_{\lambda_\alpha})$ for some $\lambda_1,\lambda_2,\dots,\lambda_{\alpha}\in\{t+1,t+2,\dots,r\}$.

As $j$ appears in this partial sum corresponding to a closed circuit in $\Gamma^2$ we know that $e_{k+1}$ is an edge in $\Gamma^2$ and therefore not in $\Gamma^1$. Thus the path from $P_a$ to $P_v$ in $\Gamma^1$ does not use the edge $e_{k+1}$ and hence by Lemma~\ref{max_t}(b) we have $v=a+(s_{t+1}+s_{t+2}+\dots+s_r)$. Using the equality $0=j-(s_{\lambda_1}+s_{\lambda_2}+\dots+s_{\lambda_\alpha})$ we can write $v$ as
\begin{align*}
v&=a+(s_{t+1}+s_{t+2}+\dots+s_r)+j-(s_{\lambda_1}+s_{\lambda_2}+\dots+s_{\lambda_\alpha})\\
&=a+j+\sum_{i\in I_t^+\setminus\{\lambda_1,\dots,\lambda_\alpha\}}s_i\enskip .
\end{align*}
This contradicts Lemma \ref{max_t} (b), as $v\in R_t(a)\cap\supp(H_t)$ and we have written $v$ as a sum that does not include $s_i$ for every $i\in I_t^+$. 
The proof of the claim (and thus of Lemma~\ref{subgraph_connected}) is now complete.
% Thus $G_b'$ is connected, and hence by Theorem \ref{eulerian_unic}(a), there exists an Eulerian path from $P_a$ to $P_{a+j}$ on $G_b'$.
\end{proof}

We are now ready to finish the proof of Proposition \ref{max_graph_decomp}.

\begin{proof}[Conclusion of the proof of Proposition~\ref{max_graph_decomp}]
Let $t$ be the largest integer such that there exists a graph $G\in U(a,j)$ containing $H_t$, and $G$ be the maximal graph in $U(a,j)$. By Lemma \ref{max_subgraphs}, $G$ must contain $H_t$. Write $G=G'  \CMamalg\ H_t$. By Lemma \ref{subgraph_connected}, $G'\in U(a,j,I_t)$. 
By Lemma~\ref{max_connecting_path} (whose assumptions are met by Lemma \ref{max_t} (a)), the largest graph in $U(a,j,I_t)$ is 
the conjectured subgraph $\hat G'$ in the proposition. We want to show that $G'=\hat G'$. 
As $\hat G'$ is maximal in $U(a,j,I_t)$, it suffices to show that $\hat G=\hat G'\ \CMamalg\ H_t$ lies in $U(a,j)$.

Clearly $\hat G_a$ has an Eulerian circuit. Thus we only need to check that $\hat G$ has no repeated edges. 
As $s_{t+1}+s_{t+2}+\cdots +s_r\le |a'|\le \lceil \frac{n-1}{2} \rceil$, $n$ is sufficiently large so that the vertices of the graph $\hat G$ 
are all distinct. Thus the edges of $\hat G'$ are all distinct, and the edges of $H_t$ are all distinct. 
If a repeated edge were present, we would have more than one vertex having nonzero degree in both subgraphs $\hat G'$ and $H_t$. 
On the other hand, $\hat G'$ and $H_t$ have exactly one vertex in common, namely $P_{a' + s_r + \ldots + s_{t+1}}$ in part (a) and
$P_{a' - s_r + \ldots - s_{t+1}}$ in part (b). Any other vertex is reachable from $a'$ using the edges from $I_{t + 1}$ and hence,
cannot lie in $H_t$ (or even in $H_{t+1}$ by the minimality of $t$; see Lemma~\ref{reachable_H}). This shows that
the edges of $\hat G$ are distinct and hence, $\hat G\in U(a,j)$, completing the proof.
\end{proof}

\section{Proof of Proposition~\ref{Ic_L_j_ker}(a)}
\label{section_j_nonzero}

In this section we will prove the following.

\begin{proposition}\label{Ic_L_j_ker_j_neq_0} 
Let $1 \leqslant r < n$, $a, j \in \mathbb Z / n \mathbb Z$ and $j \neq 0$. Assume that $F$ is an infinite field of characteristic not dividing
$r!$. If both $a$ and $a+j$ lie in $\supp(H_r) = [-\lceil r/2 \rceil\interval \lfloor r/2 \rfloor]$, assume further that $a, a+j \neq 0$. Then
\[ \text{$(\Ic(L_j))_{a,b}= 0$ for any $b\neq a$ and
$(\Ic(L_j))_{a,a}\neq 0$} \]
in $F$.
\end{proposition}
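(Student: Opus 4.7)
The plan is to apply Lemma~\ref{Ic_entries} to the maximal graph $\hat G$ of $U(a,j)$ furnished by Proposition~\ref{max_graph_decomp}. First I would observe that in $\hat G = H_t \CMamalg \hat G'$, every vertex has equal in-degree and out-degree, because $H_t$ and $\hat G'$ each consist of pairs of oppositely directed edges between adjacent vertices. Adding $e_{k+1}$ in $\hat G_b$ shifts the in-out imbalance at $P_b$ by $+1$ and at $P_{b+j}$ by $-1$; matching this against the conditions of Theorem~\ref{eulerian_unic} for an Eulerian path from $P_a$ to $P_{a+j}$ and using $j \neq 0$ forces $b = a$. Hence $\uni_{P_a}(\hat G_b) = \emptyset$ for every $b \neq a$.

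Next I would verify $\hat G \in U_{nz}(a,j)$ by computing $\sum_{w \in \uni_{P_a}(\hat G_a)} \sgn(w)$, splitting into cases according to $t$ and the positions of $a$ and $a+j$. When $t = r$ and both $a, a+j \in \supp(H_r) \setminus \{0\}$, the graph $\hat G_a$ is the star $H_r$ together with an edge joining two outer vertices, matching the configuration of Lemma~\ref{diag_Eulerian_sum_2}; the signed count is $\pm (r-1)!$. When $t = r$ and only one of $a, a+j$ lies in $\supp(H_r)$, the other is a pendant vertex of $\hat G_a$, so $e_{k+1}$ is forced to be the first or last edge of every Eulerian path, and the residual signed count supplied by Lemma~\ref{diag_Eulerian_sum_1} is $\pm r!$ or $\pm (r-1)!$ according to whether the remaining circuit on $H_r$ begins at $P_0$ or at an outer vertex. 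When $t < r$, neither $a$ nor $a+j$ lies in $\supp(H_r)$, and using Lemma~\ref{max_t}(b) I would argue that every Eulerian path on $\hat G_a$ must first traverse the outgoing edges of the chain $\hat G'$ (together with $e_{k+1}$), then execute an Eulerian circuit of $H_t$ at the chain-star joining vertex $P_c$, and finally traverse the returning chain edges; Lemma~\ref{diag_Eulerian_sum_1} again yields $\pm t!$ or $\pm (t-1)!$. In every case the count is a nonzero divisor of $r!$, so nonzero in $F$ by the characteristic hypothesis.

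With these two steps in place, $\hat G$ lies in $U_{nz}(a,j) \subseteq U(a,j)$ and is automatically maximal in $U_{nz}(a,j)$. Lemma~\ref{Ic_entries} then gives $(\Ic(L_j))_{a,b} = \sum_{w \in \uni_{P_a}(\hat G_b)} \sgn(w)$, which vanishes for $b \neq a$ by the first step and is nonzero for $b = a$ by the second. The main obstacle will be the case $t < r$: one must leverage Lemma~\ref{max_t}(b) to place $P_{a+j}$ precisely relative to the chain (so that $e_{k+1}$ naturally extends the chain at one of its ends rather than creating a chord), and then track signs through the forced chain traversals before invoking Lemma~\ref{diag_Eulerian_sum_1} on the inner star.
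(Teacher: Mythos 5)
Your proposal is correct and follows essentially the same route as the paper, which splits the argument into two lemmas: a degree-count argument (Theorem~\ref{eulerian_unic} on the balanced graph $\hat G$) forcing $b=a$, and a computation of $\sum_{w\in\uni_{P_a}(\hat G_a)}\sgn(w)$ reducing to Lemmas~\ref{diag_Eulerian_sum_1} and~\ref{diag_Eulerian_sum_2}, with the characteristic hypothesis then giving nonvanishing in $F$ so that $\hat G\in U_{nz}(a,j)$ and Lemma~\ref{Ic_entries} applies. The only differences are organizational: you split cases by $t$ and by how many of $a,a+j$ lie in $\supp(H_r)$, whereas the paper uses a trichotomy based on which of $P_a,P_{a+j}$ is $\succ$-larger and whether it lies in $\supp(H_t)$; these are equivalent partitions of the same configurations. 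Your attention to whether $e_{k+1}$ extends the chain $\hat G'$ at an end rather than forming a chord is well placed — the paper handles this implicitly by placing $e_{k+1}$ at the $\succ$-smaller endpoint and invoking the constraint $s_{t+1}+\cdots+s_r\leqslant|a'|$ from Lemma~\ref{max_t}(a) — so your explicit appeal to Lemma~\ref{max_t}(b) is a mild elaboration rather than a different strategy.
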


Note that Proposition \ref{Ic_L_j_ker}(a) readily follows from Proposition~\ref{Ic_L_j_ker_j_neq_0}.
Indeed, fix $0 \neq j \in \Z/ n \Z$. By the definition of $\delta_j$ (see the statement of Proposition~\ref{Ic_L_j_ker}), 
there are exactly $\delta_j$ values of $a \in \Z/ n \Z$ such that $(a, j)$ does not satisfy the conditions of Proposition~\ref{Ic_L_j_ker_j_neq_0}.
(Note that this can only happen if $a \in \{ 0, -j \}$, so in each case $\delta_j = 0$, $1$ or $2$.) If we remove the $a^{\rm th}$ 
row and the $a^{\rm th}$ column from $\Ic(L_j)$ for every such $a$, we will be left with a diagonal matrix with non-zero 
diagonal entries. In other words, if we remove $\delta_j$ rows and $\delta_j$ columns from $\In(L_j)$, the remaining
$(n- \delta_j) \times (n - \delta_j)$ matrix is non-singular. This shows that $\nullity(\In(L_j)) \leqslant \delta_j$, as desired.

\smallskip
The remainder of this section will be devoted to proving Proposition~\ref{Ic_L_j_ker_j_neq_0}.

\begin{lemma}\label{only_diagonal}
Let $j\neq 0$ and $G$ be the maximal graph in $U(a,j)$. Suppose that there exists an Eulerian path from $P_a$ on $G_b$. Then $b=a$.
\end{lemma}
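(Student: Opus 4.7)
The plan is to show that in the graph $G$ itself (before appending the edge $e_{k+1}$), every vertex is balanced, i.e.\ has indegree equal to outdegree. Then appending the single edge $e_{k+1}$ from $P_b$ to $P_{b+j}$, with $j\ne 0$, will create exactly two imbalanced vertices in $G_b$, namely $P_b$ and $P_{b+j}$, and Theorem~\ref{eulerian_unic} will force the start of any Eulerian path to be $P_b$.

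First I would invoke Proposition~\ref{max_graph_decomp} to write $G=\hat G'\ \CMamalg\ H_t$, where $\hat G'$ and $H_t$ have the explicit descriptions given there. The graph $H_t$ consists of pairs of edges $e_{i}, e_{-i}$ each going once in each direction between $P_0$ and some other vertex, so every vertex of $H_t$ has indegree equal to outdegree. Similarly, $\hat G'$ consists of pairs of edges $e_\ell, e_{-\ell}$ joining consecutive vertices along a path, and a direct inspection shows that each interior vertex has one in-edge and one out-edge from each adjacent pair (so total indeg = outdeg = 2), while the two endpoints of the chain have indeg = outdeg = 1. Since $\hat G'$ and $H_t$ meet only at the single vertex $P_{a'+s_r+\cdots+s_{t+1}}$, and each of them is separately balanced at that vertex, the combined graph $G$ is balanced at every vertex.

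Now form $G_b$ by adjoining the edge $e_{k+1}$ with source $P_b$ and target $P_{b+j}$. Because $j\ne 0$, we have $P_b\ne P_{b+j}$, so in $G_b$ the outdegree of $P_b$ exceeds its indegree by $1$, the indegree of $P_{b+j}$ exceeds its outdegree by $1$, and every other vertex remains balanced. By Theorem~\ref{eulerian_unic}, the existence of an Eulerian path from $P_a$ on $G_b$ requires either (i) $P_a$ is itself the vertex with outdegree exceeding indegree by $1$, or (ii) every vertex of $G_b$ is balanced, in which case the path is a closed Eulerian circuit. Option (ii) is ruled out by the imbalance at $P_b$ and $P_{b+j}$, so we must be in case (i), which gives $a=b$.

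The only part requiring any care is the balance check at the junction vertex $P_{a'+s_r+\cdots+s_{t+1}}$ shared by $\hat G'$ and $H_t$; once that is handled, the rest is a direct application of Euler's degree criterion. I do not anticipate any real obstacle here, since the explicit structure of $G$ supplied by Proposition~\ref{max_graph_decomp} makes the degree count immediate.
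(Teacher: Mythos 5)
Your proposal is correct and takes essentially the same route as the paper: both use Proposition~\ref{max_graph_decomp} to conclude that every vertex of $G$ is balanced, so that appending $e_{k+1}$ (with $j\ne 0$) creates exactly two imbalanced vertices, and then apply Theorem~\ref{eulerian_unic} to force the Eulerian path to start at $P_b$. The extra care you take verifying balance at the junction vertex of $\hat G'$ and $H_t$ is a welcome detail but does not change the argument.
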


\begin{proof} The maximal graph $G$ in $U(a, j)$ is described in Proposition \ref{max_graph_decomp}. 
It is clear from this description that the indegree matches the outdegree for each vertex of $G$. Thus in the graph
\[G_b=G\ \CMcoprod\ \underset{P_b}\bullet \underset{e_{k+1}}\rightarrow \underset{P_{b+j}}\bullet\]
the outdegree of $P_b$ is $1$ larger than its indegree and the outdegree of $P_{b+j}$ is $1$ smaller than its indegree. 
Since $j \neq 0$ in $\mathbb Z/ n \mathbb Z$, we have $P_b\neq P_{b+j}$. Thus every Eulerian path on $G_b$ starts at $P_b$ and ends at $P_{b+j}$;
see Theorem~\ref{eulerian_unic}.
Since we assumed there exists an Eulerian path from $P_a$ on $G_b$ we conclude that $b=a$.
\end{proof}

\begin{lemma}\label{nonzero_diagonal} Let $a, j \in \mathbb Z / n \mathbb Z$ be as in the statement of Proposition~\ref{Ic_L_j_ker_j_neq_0}
and $G$ be the maximal graph in $U(a, j)$. Then the sum of the signatures of Eulerian paths on $G_a$ from $P_a$ to $P_{a+j}$ 
is $\pm\alpha!$ for some $\alpha\leqslant r$.
\end{lemma}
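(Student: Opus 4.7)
The plan is to use the explicit description of the maximal graph $\hat G\in U(a,j)$ from Proposition~\ref{max_graph_decomp}---namely $\hat G = H_t \amalg \hat G'$, with $H_t$ a bouquet of $t$ pairs of edges centered at $P_0$ and $\hat G'$ a chain of $r-t$ pairs of edges joining $P_{a'}=\max(P_a,P_{a+j})$ to $P_v\in \supp(H_t)$---and to identify $G_a = \hat G\amalg\{e_{k+1}\colon P_a\to P_{a+j}\}$ with one of the canonical graph families studied in Lemmas~\ref{diag_Eulerian_sum_1}--\ref{Eulerian_sum_2}. By the symmetry of reversing all edge directions (which merely applies a global sign to every signature and swaps the roles of source/target), I will assume $a=a'$, placing $P_a$ at the tip of $\hat G'$ with $\indeg_{\hat G}(P_a)=\outdeg_{\hat G}(P_a)=1$.

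The principal subcase is $P_{a+j} \notin \hat G$: then $\outdeg_{G_a}(P_{a+j})=0$, so every Eulerian path on $G_a$ from $P_a$ to $P_{a+j}$ must terminate with $e_{k+1}$, and the signed count equals the signed count of Eulerian circuits on $\hat G$ starting at $P_a$. The degree-$1$ tip condition at $P_a$ forces such a circuit to have the form $(e_r,\gamma,e_{-r})$, where $\gamma$ is an Eulerian circuit on $\hat G\setminus\{e_r,e_{-r}\}$ starting at $P_{a+s_r}$; iterating this strip-an-outer-pair reduction along the chain leaves the signed count of Eulerian circuits on $H_t$ starting at $P_v$, which by Lemma~\ref{diag_Eulerian_sum_1} equals $\pm t!$ if $v=0$ and $\pm(t-1)!$ otherwise. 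The case $t=r$ (empty chain) with $a,a+j$ distinct nonzero leaves of $H_r$ is handled by direct identification of $G_a$ with the graph of Lemma~\ref{diag_Eulerian_sum_2} using $\alpha=r$, yielding $\pm(r-1)!$. The remaining subcases---where $P_{a+j}$ lies on the chain $\hat G'$ or is a leaf of $H_t$ distinct from $P_v$---are handled similarly by matching $G_a$ (after relabeling) to one of the templates in Section~\ref{prelims}, producing a signed count of the form $\pm \alpha!$ with $\alpha\le r$.

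The main technical obstacle is the case analysis: when $P_{a+j}\in\hat G$, the edge $e_{k+1}$ need not occur at the end of the Eulerian path, so the counting splits into several sub-counts---one for each possible location of $e_{k+1}$ within the path---whose combined signature must still collapse to a single $\pm\alpha!$. Sign-tracking in this combination is delicate, since each individual sub-count is a factorial but their sum must be verified to remain a factorial. The hypothesis that $a,a+j\ne 0$ when both belong to $\supp(H_r)$ is precisely what rules out the degenerate subcases in which the combined count would instead produce a value of the form $\pm 2\alpha!$ or $\pm(\alpha+1)!$, which are not of the required factorial form.
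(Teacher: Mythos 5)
Your proposal follows essentially the same route as the paper: read off the structure of the maximal graph from Proposition~\ref{max_graph_decomp}, then reduce to the prepared Eulerian-path counts of Lemmas~\ref{diag_Eulerian_sum_1} and~\ref{diag_Eulerian_sum_2}. The reduction to $a=a'$ by reversing all edges is the same symmetry the paper exploits by splitting into its Case~1 and Case~2.

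One point worth flagging: the ``remaining subcases'' you defer (where $P_{a+j}$ lies on the chain $\hat G'$, or where $P_{a+j}$ is a leaf of $H_t$ for $t<r$) are in fact vacuous, so the deferral is unnecessary and the worry about combining sub-counts into a single $\pm\alpha!$ does not materialize outside the $t=r$ leaf-to-leaf case you already handle via Lemma~\ref{diag_Eulerian_sum_2}. If $a'=a$ then every vertex of nonzero degree on the chain $v$ satisfies $P_v\succeq P_a$, while $P_a\succ P_{a+j}$, so $P_{a+j}$ cannot lie on the chain. And if $a+j\in\supp(H_t)$ with $t>0$, then $a+j\in R_t(a)\cap\supp(H_t)$ via the singleton partial sum $\{j\}$, so Lemma~\ref{max_t}(b) forces $I_t^+$ or $I_t^-$ to be empty, i.e.\ $t=r$. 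Observing this cleans up the argument to exactly the three cases of the paper's proof. As written, your proposal is correct, but you did not verify that the deferred cases are empty, and the hedge (``handled similarly by matching $G_a$ to one of the templates'') stops short of a complete argument in what would otherwise be the most delicate situation.
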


\begin{proof} Let $0 \leqslant t \leqslant r$ be the largest integer such that a graph from $U(a, j)$ 
contains $H_t$. Recall that $H_0 = \emptyset$, so $t$ is well defined. We will consider three cases.

\smallskip
Case 1. $P_{a+j}\succ P_{a}$ and $a$ does not lie in $\supp(H_t)$.  (Note that if $a$ lies in $\supp(H_t)$, then so does $a + j$;
hence $r = t$ and we are in Case 3 below.)
By Proposition \ref{max_graph_decomp}, $G_a=H_t\ \CMamalg\ G_a'$, for $G_a'$ of the form
\renewcommand{\nodeDist}{2}
\[G_a'=
\begin{tikzpicture}[baseline,vertex/.style={anchor=base, circle, fill,minimum size=4pt, inner sep=0pt, outer sep=0pt},auto,
                               edge/.style={->,>=latex, shorten > = 5pt,shorten < = 5pt},
position/.style args={#1:#2 from #3}{
    at=(#3.#1), anchor=#1+180, shift=(#1:#2)
}]
  \node[vertex] (v1) {};
  \node[vertex,position = 0:{\nodeDist} from v1] (v2) {};
  \node[vertex,position = 0:{\nodeDist} from v2] (v3) {};
  \coordinate[position = 0:{\nodeDist} from v3] (v4);
  \coordinate[position = 0:{\nodeDist} from v4] (v5) {};
  \node[vertex,position = 0:{\nodeDist} from v5] (v6) {};

  \draw (v1) node[below = 13] {$P_a$};
  \draw (v2) node[below = 13] {$P_{a+j}$};
  \draw (v3) node[below = 13] {$P_{a+j\pm s_{r}}$};
  \draw (v6) node[below = 13] {$P_{a+j\pm s}$};

  \draw[edge] (v1) to [bend left=5] node[below] {$e_{k+1}$} (v2);

  \draw[edge] (v2) to [bend left=10] node[above] {$e_{\pm r}$} (v3);
  \draw[edge] (v3) to [bend left=15] node[below] {$e_{\mp r}$} (v2);

  \draw[edge] (v3) to [bend left=10] node[above] {$e_{\pm (r-1)}$} (v4);
  \draw[edge] (v4) to [bend left=15] node[below] {$e_{\mp (r-1)}$} (v3);

  \draw[dashed,shorten > = 5pt, shorten < = 5pt] (v4) -- (v5);

  \draw[edge] (v5) to [bend left=10] node[above] {$e_{\pm (t+1)}$} (v6);
  \draw[edge] (v6) to [bend left=15] node[below] {$e_{\mp (t+1)}$} (v5);
\end{tikzpicture}\enskip .
\]
where $s=s_{t+1}+\dots+s_r$.
Since $a$ does not lie in $\supp(H_t)$, any Eulerian path from $P_a$ on $G_a$ is of the form 
\[(e_{k+1},e_{\pm r},\dots,e_{\pm (t + 1)},w,e_{\mp (t+1)},\dots,e_{\mp r}), \] 
where $w$ is an Eulerian path on $H_t$. 
By Lemma \ref{diag_Eulerian_sum_1}, the sum of the signatures of these Eulerian paths is $\pm(t-1)!$ or $\pm t!$, depending on
whether $P_{a+j+s_{t+1}+\dots+s_{r}} = P_{0}$ or not.

\smallskip
Case 2. $P_{a}\succ P_{a+j}$ and $a+ j$ does not lie in $\supp(H_t)$. (Once again, if $a + j$ lies in $\supp(H_t)$, then we are in Case 3.)
By Proposition \ref{max_graph_decomp}, $G_a=H_t\ \CMamalg\ G_a'$, where $G_a'$ is of the form
\[G_a'=
\begin{tikzpicture}[baseline,vertex/.style={anchor=base, circle, fill,minimum size=4pt, inner sep=0pt, outer sep=0pt},auto,
                               edge/.style={->,>=latex, shorten > = 5pt,shorten < = 5pt},
position/.style args={#1:#2 from #3}{
    at=(#3.#1), anchor=#1+180, shift=(#1:#2)
}]
  \node[vertex] (v1) {};
  \node[vertex,position = 0:{\nodeDist} from v1] (v2) {};
  \node[vertex,position = 0:{\nodeDist} from v2] (v3) {};
  \coordinate[position = 0:{\nodeDist} from v3] (v4);
  \coordinate[position = 0:{\nodeDist} from v4] (v5) {};
  \node[vertex,position = 0:{\nodeDist} from v5] (v6) {};

  \draw (v1) node[below = 13] {$P_{a+j}$};
  \draw (v2) node[below = 13] {$P_{a}$};
  \draw (v3) node[below = 13] {$P_{a\pm s_{r}}$};
  \draw (v6) node[below = 13] {$P_{a\pm s}$};

  \draw[edge] (v2) to [bend left=5] node[below] {$e_{k+1}$} (v1);

  \draw[edge] (v2) to [bend left=10] node[above] {$e_{\pm r}$} (v3);
  \draw[edge] (v3) to [bend left=15] node[below] {$e_{\mp r}$} (v2);

  \draw[edge] (v3) to [bend left=10] node[above] {$e_{\pm (r-1)}$} (v4);
  \draw[edge] (v4) to [bend left=15] node[below] {$e_{\mp (r-1)}$} (v3);

  \draw[dashed,shorten > = 5pt, shorten < = 5pt] (v4) -- (v5);

  \draw[edge] (v5) to [bend left=10] node[above] {$e_{\pm (t+1)}$} (v6);
  \draw[edge] (v6) to [bend left=15] node[below] {$e_{\mp (t + 1)}$} (v5);
\end{tikzpicture}\enskip .
\]
where $s=s_{t+1}+\dots+s_r$. Again, since $a+j$ does not lie in $\supp(H_t)$, the sum of the signatures of the Eulerian paths 
will be determined by the sum of the signatures of the Eulerian paths on $H_t$, being either $\pm (t-1)!$ or $\pm t!$.

\smallskip
Case 3. Both $a$ and $a+j$ lie in $\supp(H_t)$. By the definition of $t$, this forces $t=r$. By our assumptions on $a$ and $j$ 
we have $a \neq 0$, $j \neq 0$ and $a + j\neq 0$. Thus $G_a$ is of the same form as the graph considered in Lemma \ref{diag_Eulerian_sum_2}, 
where we showed there that $\sum_{w\in \uni_{P_a}(G_a)}\sgn(w)=\pm (r-1)!$.
\end{proof}

\begin{proof}[Proof of Proposition~\ref{Ic_L_j_ker_j_neq_0}] Let $G$ be the maximal graph in $U(a, j)$. The key point is that
since $\operatorname{char}(F)$ does not divide $r!$, Lemma~\ref{nonzero_diagonal} tells us 
under our assumptions on $a$ and $j$, $G \in U_{nz}(a, j)$. In other words, $G$ is the maximal graph in $U_{nz}(a, j)$. Thus
by Lemma~\ref{Ic_entries}, $(\Ic(L_j))_{a, b}$ is the sum of the signatures of the Eulerian paths on $G_b$ from $P_a$ to
$P_{a+j}$. If $b \neq a$, then Lemma~\ref{only_diagonal} tells us that there are no such paths, so $(\Ic(L_j))_{a, b} = 0$.
On the other hand, $(\Ic(L_j))_{a, a} \neq 0$ in $F$ by Lemma~\ref{nonzero_diagonal}.
\end{proof}

\begin{remark}
In those cases where the pair $(a, j)$ does not safisfy the conditions of Proposition~\ref{Ic_L_j_ker_j_neq_0},
the maximal graph $G$ of $U(a, j)$ does not lie in $U_{nz}(a,j)$. If it did, the nullity of $\In(L_j)$ would be lower than the value given by
Proposition~\ref{Ic_L_j_ker} for at least one $j$. In view of Lemma~\ref{Ic_det}, the nullity of $L = L_0 \oplus L_1 \oplus \ldots \oplus L_{n-1}$ 
would be lower than $k$, contradicting Lemma~\ref{lem.zariski}(b).

To illustrate this point more concretely, let us revisit Example~\ref{Ic_eg}. Here $k=2$ (so, $r = 1$), $n=3$, and $j = 1$.
The maximal graph from $U(a,1)$ does not lie in $U_{nz}(a,1)$ when $a=2$, though it does when $a=0$ or $1$. 
After removing the row and column corresponding to $a=2$ (i.e., the last row and the last column) 
from the $3 \times 3$ matrix $\Ic(L_1)$ we are
left with a nonsingular diagonal $2 \times 2$ matrix, showing that $\nullity(\Ic(L_1))=1$.
\end{remark}

\section{Proof of Proposition~\ref{Ic_L_j_ker}(b)}
\label{section_j_0}

\newcommand{\LT}{\mathcal{L}} % {\bigtriangleup} % {\operatorname{B}}
\newcommand{\UT}{\mathcal{U}} % {\bigtriangledown}   % {\operatorname{A}}

As we showed in the previous section, for $j \neq 0$ in $\bbZ/ n \bbZ$ the matrix $\Ic(L_j)$ is close to being diagonal. In this section we will see that
$\Ic(L_0)$ has a more complicated structure. 
For ease of visualizing the matrix $\Ic(L_0)$, we will reorder the rows and columns. In the matrix $\Ic(L_0)$, our rows and columns corresponding to $a$ and $b$ respectively range from $0$ to $n-1$. We define the $n\times n$ matrix $\Ic(L_0)'$ to be the matrix $\Ic(L_0)$ but with the rows and columns corresponding to $a$ and $b$ ranging from $-\lceil \dfrac{n-1}{2}\rceil$ to $\lfloor \dfrac{n-1}{2} \rfloor$. Since permuting rows and columns does not effect the nullity of a matrix, 
we have 
\[ \nullity(\Ic(L_0)')=\nullity(\Ic(L_0)). \]
Thus our goal is to show that $\Ic(L_0)'$ is a non-singular matrix.
We will do this by proving that $\Ic(L_0)'$ is of the form
\begin{equation}\label{Ic_L_0}
\Ic(L_0)'=\left(
\begin{array}{ccc}
\UT & \ast &{\bf 0}\\
{\bf 0}& N &{\bf 0}\\
{\bf 0}& \ast & \LT
\end{array}
\right),
\end{equation}
where 

\begin{itemize}
\item
$\UT$, $N$ and $\LT$ are square matrices,

\item
$\UT$ is upper triangular with non-zero diagonal matrices, 

\item
$\LT$ is lower triangular with non-zero diagonal entries, and

\item
$N$ is the $(r+1)\times (r+1)$ submatrix corresponding to rows and columns 
labeled by elements of $\supp(H_r) = \{ -\lceil r/2 \rceil,-\lceil r/2 \rceil+1,\dots,\lfloor r/2 \rfloor \}$,

\item
$N$ is non-singular.
\end{itemize}

This will imply that $\Ic(L_0)'$ is a non-singular matrix and hence, so is $\Ic(L_0)$, thus completing the proof of 
Proposition~\ref{Ic_L_j_ker}(b). Proofs of these assertions will be carried out in 
Lemmas~\ref{upper_triang_part},~\ref{lower_triang_part},~\ref{N_lemma} and~\ref{N_nonsingular}.
The idea is to read off the entries of $\Ic(L_0)'$ from Proposition~\ref{max_graph_decomp} using 
their graph-theoretic interpretation given by Lemma~\ref{Ic_entries}.

\begin{lemma}\label{upper_triang_part}
The matrix consisting of the first $\lceil \dfrac{n-1}{2}\rceil-\lceil \dfrac r 2\rceil$ rows of $\Ic(L_0)'$ has the structure
\[\left(\begin{array}{ccc} \UT&\ast&{\bf 0}\end{array}\right)=
\left(\begin{array}{cccccccccccccc}
\ast     & \ast     & \ast     & \dots    & \ast    & \ast     & \ast      & \ast    & \dots   & \ast     & 0          & 0        & \dots & 0\\
0         & \ast     & \ast     & \dots    & \ast    & \ast     & \ast      & \ast    & \dots   & \ast     & 0          & 0        & \dots & 0\\
0         & 0         & \ast     & \dots    & \ast    & \ast     & \ast      & \ast    & \dots   & \ast     & 0          & 0         & \dots & 0\\
\vdots & \vdots & \vdots & \ddots & \vdots & \vdots & \vdots & \vdots &            & \vdots & \vdots & \vdots &          & \vdots\\
0         & 0         & 0         & \dots    & \ast    & \ast     & \ast      & \ast     & \dots   & \ast     & 0         & 0         & \dots & 0\\
0         & 0         & 0         & \dots    & 0        & \ast     & \ast      & \ast     & \dots   & \ast      & 0         & 0        & \dots & 0\\
\end{array}\right)\ .\]
The first $\lceil\frac{n-1}{2}\rceil-\lceil\frac r 2\rceil$ columns form an upper triangular block $\UT$, where the terms on the diagonal are $\pm 2\alpha!$ for some $\alpha\leqslant r$. The last $\lfloor\frac{n-1}2\rfloor-\lfloor\frac r 2\rfloor$ columns are zero.
\end{lemma}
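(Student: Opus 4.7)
The plan is to combine Proposition~\ref{max_graph_decomp} with Lemma~\ref{Ic_entries} to read off the first $\lceil (n-1)/2 \rceil - \lceil r/2 \rceil$ rows of $\Ic(L_0)'$ directly. For any $a$ in this block we have $a \leqslant -\lceil r/2 \rceil - 1$, so $a \notin \supp(H_r)$; since $j = 0$, $a' = a$ lies in $[-\lceil (n-1)/2 \rceil \interval 0]$, and Proposition~\ref{max_graph_decomp}(a) identifies the maximal $G \in U(a,0)$ as $G = H_t \CMamalg \hat G'$, where $\hat G'$ is a chain of $r-t$ two-cycles running from $P_a$ to a connection vertex $P_v$ with $v = a + s_r + s_{r-1} + \cdots + s_{t+1} \in \supp(H_t)$.

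The first key step is to locate $\supp(G)$. Since each $s_i \geqslant 1$ and $s_{t+1} + \cdots + s_r \leqslant r < n$, the tail vertices lift to a strictly increasing integer sequence from $a$ up to $v$; combined with $\supp(H_t) \subseteq \supp(H_r) = [-\lceil r/2 \rceil \interval \lfloor r/2 \rfloor]$, this gives $\supp(G) \subseteq [a \interval \lfloor r/2 \rfloor]$. Consequently, for $b < a$ or $b > \lfloor r/2 \rfloor$, the vertex $P_b$ lies outside $\supp(G)$, so $G_b$ is disconnected (the loop $e_{k+1}$ at $P_b$ sitting in a separate component from $G$); Theorem~\ref{eulerian_unic} then rules out any Eulerian path from $P_a$, and via Lemma~\ref{Ic_entries} the corresponding entry of $\Ic(L_0)'$ is zero. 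This simultaneously supplies the lower-triangular zeros of $\UT$ and the zero block of $\lfloor (n-1)/2 \rfloor - \lfloor r/2 \rfloor$ columns on the right.

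The remaining task is to compute the diagonal entry at $b = a$. The graph $G_a$ is $H_t$ glued via $P_v$ to the chain $\hat G'$, with a loop $e_{k+1}$ added at $P_a$. When $v = 0$ this is precisely the graph of Lemma~\ref{Eulerian_sum_2} with $\alpha = t$ petals at the hub $P_0$ and $\beta = r - t$ chain-links leading out to $P_a$; when $v \neq 0$, I will instead absorb the single two-cycle of $H_t$ joining $P_v$ to $P_0$ into the chain, matching Lemma~\ref{Eulerian_sum_2} with $\alpha = t - 1$ and $\beta = r - t + 1$. In either case the lemma delivers
\[
\sum_{w \in \uni_{P_a}(G_a)} \sgn(w) = \pm 2\, \alpha !
\]
with $\alpha \leqslant r$; this is nonzero in $F$ because $\operatorname{char}(F) \nmid 2\, r!$. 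In particular $G \in U_{nz}(a,0)$, hence $G$ is also the maximum of $U_{nz}(a, 0)$, and Lemma~\ref{Ic_entries} gives the diagonal entry $\pm 2\alpha!$ of $\UT$.

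The main obstacle I anticipate is the case analysis in the last step: reinterpreting $G_a$ as an instance of Lemma~\ref{Eulerian_sum_2} when $v$ is a petal of $H_t$ rather than its hub, and verifying the degenerate boundary cases $t \in \{0, 1\}$ where the flower appearing in Lemma~\ref{Eulerian_sum_2} has too few petals for that lemma's picture to apply literally; these small cases will need to be checked by a direct path count that still yields $\pm 2\cdot 0! = \pm 2$.
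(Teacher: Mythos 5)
Your proposal is correct and takes essentially the same route as the paper: identify the maximal graph $G = H_t \CMamalg \hat G'$ via Proposition~\ref{max_graph_decomp}(a), observe that $\supp(G) \subseteq [a \interval \lfloor r/2 \rfloor]$ forces connectivity failure (and hence a zero entry) whenever $b \notin [a \interval \lfloor r/2 \rfloor]$, and then recognize $G_a$ as an instance of Lemma~\ref{Eulerian_sum_2} to get $\pm 2\alpha!$ on the diagonal. The case split you articulate (whether the connection vertex $v = a + s_r + \cdots + s_{t+1}$ equals $0$ or is a petal of $H_t$, with $\alpha = t$ versus $\alpha = t-1$) is exactly what the paper handles implicitly by noting that the two edges joining $P_v$ and $P_0$ in its diagram "should be removed" when the vertices coincide; your worry about the degenerate boundary cases $t \in \{0,1\}$ is harmless, since Lemma~\ref{Eulerian_sum_2} places no lower bound on $\alpha$ and $\beta \geqslant 1$ is automatic here because $a \notin \supp(H_r)$ forces $t < r$.
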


\begin{proof}

By Lemma \ref{Ic_entries}, it suffices to prove the following.

\smallskip

Fix $-\lceil \dfrac{n-1}{2} \rceil \leqslant a\leqslant -\lceil \dfrac{r}{2} \rceil-1$. Let $G$ be the maximal graph in $U(a,0)$ and $b\in \Z/n\Z$ be such that there exists an Eulerian path from $P_a$ to $P_a$ on $G_b=G\ \CMamalg\ \underset {P_b} \bullet \underset {e_{k+1}} \rightarrow \underset {P_{b}} \bullet$. 
Then 

\smallskip
(a) $b\in\{a \, , \, a+1, \, \ldots, \, -1, 0, 1, \ldots, \, \lfloor \dfrac{r}{2} \rfloor - 1, \, \lfloor \dfrac{r}{2} \rfloor\}$. 

\smallskip
(b) the sum of the signatures of the Eulerian paths on $G_a$ from $P_a$ to $P_a$ is $\pm 2\alpha!$ for some $\alpha\leqslant r$.

\smallskip
Recall that by Proposition~\ref{max_graph_decomp}(a), $G=H_t\ \CMamalg\ G'$ 
for some $0 \leqslant t\leqslant r$, where $G'$ is of the form
\[G'=
\begin{tikzpicture}[baseline,vertex/.style={anchor=base, circle, fill,minimum size=4pt, inner sep=0pt, outer sep=0pt},auto,
                               edge/.style={->,>=latex, shorten > = 5pt,shorten < = 5pt},
position/.style args={#1:#2 from #3}{
    at=(#3.#1), anchor=#1+180, shift=(#1:#2)
}]
  \node[vertex] (v1) {};
  \node[vertex,position = 0:{\nodeDist} from v1] (v2) {};
  \coordinate[position = 0:{\nodeDist} from v2] (v3);
  \coordinate[position = 0:{\nodeDist} from v3] (v4) {};
  \node[vertex,position = 0:{\nodeDist} from v4] (v5) {};

  \draw (v1) node[below = 13] {$P_a$};
  \draw (v2) node[below = 13] {$P_{a+s_{s_r}}$};
  \draw (v5) node[below = 13] {$P_{a+s_{r}+\dots+s_{t + 1}}$};

  \draw[edge] (v1) to [bend left=10] node[above] {$e_{r}$} (v2);
  \draw[edge] (v2) to [bend left=15] node[below] {$e_{-r}$} (v1);

  \draw[edge] (v2) to [bend left=10] node[above] {$e_{r - 1}$} (v3);
  \draw[edge] (v3) to [bend left=15] node[below] {$e_{-r - 1}$} (v2);

  \draw[dashed,shorten > = 5pt, shorten < = 5pt] (v3) -- (v4);

  \draw[edge] (v4) to [bend left=10] node[above] {$e_{t + 1}$} (v5);
  \draw[edge] (v5) to [bend left=15] node[below] {$e_{-t - 1}$} (v4);
\end{tikzpicture}
\]
and $s_{r}+s_{r-1}+\cdots s_{t + 1} \leqslant |a|$. The graph $G_b$ is obtained from $G$ by appending the extra edge $e_{k+1}$ at $P_b$.
Since $j = 0$, $e_{k+1}$ is a loop. If $G_b$ has an Eulerian path, it has to be connected. In other words, the loop $e_{k+1}$ has to be appended at
one of the vertices that has non-zero degree in $G$, i.e., at $P_a$, $P_{a + s_r}$, $\ldots$, $P_{a + s_r + \ldots + s_{t + 1}}$ or at
one the vertices from $H_t$. This shows that $b\in [a \interval \lfloor r/2 \rfloor]$, thus proving (a). 

To prove (b), note that that the graph $G_a$ is of the form
\[
\begin{tikzpicture}[baseline,vertex/.style={anchor=base, circle, fill,minimum size=4pt, inner sep=0pt, outer sep=0pt},auto,
                               edge/.style={->,>=latex, shorten > = 5pt,shorten < = 5pt},
position/.style args={#1:#2 from #3}{
    at=(#3.#1), anchor=#1+180, shift=(#1:#2)
}]

  \node[vertex] (v0) {};
  \node[vertex,position = 180:{\nodeDist} from v0] (va1) {};
  \coordinate[position = 180:{\nodeDist*0.8} from va1] (va2) {};
  \coordinate[position = 180:{\nodeDist*1.1} from va2] (va3) {};
  \node[vertex,position = 180:{\nodeDist*0.8} from va3] (va4) {};
  \node[vertex,position = 70:{\nodeDist} from v0] (v1L) {};
  \node[vertex,position = 30:{\nodeDist} from v0] (v2L) {};
  \node[vertex,position = -10:{\nodeDist} from v0] (v3L) {};
  \node[vertex,position = -70:{\nodeDist} from v0] (v4L) {};

  \draw (v0) node[below=12, left=-1] {$P_0$};
  \draw (va4) node[below=2] {$P_{a}$};
  \draw (va1) node[below=4] {$P_{a+s_{r}+\dots+s_{t + 1}}$};
  \draw (v1L) node[above] {};
  \draw (v2L) node[left] {};
  \draw (v3L) node[left] {};
  \draw (v4L) node[below] {};

  \draw[edge] (va4) to [loop above] node[above] {$e_{k+1}$} (va4);

  \draw[edge] (v0) to [bend left=10] (va1);
  \draw[edge] (va1) to [bend left=10] (v0);

  \draw[edge] (va1) to [bend left=10] (va2);
  \draw[edge] (va2) to [bend left=10] (va1);

  \draw[dashed,shorten > = 6pt, shorten < = 6pt, bend right=10] (va2) -- (va3);

  \draw[edge] (va3) to [bend left=10] (va4);
  \draw[edge] (va4) to [bend left=10] (va3);

  \draw[edge] (v0) to [bend left=10] node[left] {} (v1L);
  \draw[edge] (v1L) to [bend left=10] node[right] {} (v0);

  \draw[edge] (v0) to [bend left=10] (v2L);
  \draw[edge] (v2L) to [bend left=10] (v0);

  \draw[edge] (v0) to [bend left=10] (v3L);
  \draw[edge] (v3L) to [bend left=10] (v0);

  \draw[dashed,shorten > = 10pt, shorten < = 10pt, bend right=10] (v3L) -- (v4L);

  \draw[edge] (v0) to [bend left=10] (v4L);
  \draw[edge] (v4L) to [bend left=10] (v0);
\end{tikzpicture}
\]
where the (possibly empty) flower on the right corresponds to $H_t$. Here the vertices $P_{a+s_r+\cdots+s_{t+1}}$ and $P_0$ may be distinct
(as in the above diagram) or not (in which case the two edges connecting them should be removed),
depending on whether $G'$ intersects $H_t$ at $P_0$. In either case, $G_a$ will have the shape of the graph in Lemma~\ref{Eulerian_sum_2}. By
Lemma~\ref{Eulerian_sum_2},
\[\sum_{w\in \uni_{P_a}(G_a)}\sgn(w)=\pm 2\alpha!\ ,\]
where $\alpha$ is the number of vertices to the right of $P_0$. As $G_a$ has $2r+1$ edges, there are at most $r$ vertices to the right of $P_0$ in the picture and hence $\alpha\leqslant r$.
\end{proof}

\begin{lemma}\label{lower_triang_part}
The matrix consisting of the last $\lfloor\frac{n-1}2\rfloor-\lfloor\frac r 2\rfloor$ rows of $\Ic(L_0)'$ has the structure
\[\left(\begin{array}{ccc} {\bf 0}&\ast& \LT\end{array}\right)=
\left(\begin{array}{cccccccccccccc}
0         & 0         & \dots & 0         & \ast     & \ast     & \dots & \ast    & \ast    & 0          & 0         & \dots   & 0         & 0\\
0         & 0         & \dots & 0         & \ast     & \ast     & \dots & \ast    & \ast    & \ast      & 0         & \dots   & 0         & 0\\
0         & 0         & \dots & 0         & \ast     & \ast     & \dots & \ast    & \ast    & \ast      & \ast     & \dots   & 0         & 0\\
\vdots & \vdots &          & \vdots & \vdots & \vdots &         & \vdots & \vdots & \vdots & \vdots & \ddots & \vdots & \vdots\\
0         & 0         & \dots & 0          & \ast    & \ast     & \dots & \ast     & \ast    & \ast     & \ast     & \dots    & \ast    & 0\\
0         & 0         & \dots & 0          & \ast    & \ast     & \dots & \ast     & \ast    & \ast     & \ast     & \dots    & \ast    & \ast\\
\end{array}\right)\ .\]
The last $\lfloor\frac{n-1}2\rfloor-\lfloor\frac r 2\rfloor$ columns form a lower triangular block $\LT$ where the terms on the diagonal are $\pm 2\alpha!$ for some $\alpha\leqslant r$. The first $\lceil\frac{n-1}2\rceil-\lceil\frac r 2\rceil$ columns are zero.
\end{lemma}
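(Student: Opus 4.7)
The proof plan mirrors that of Lemma~\ref{upper_triang_part}, using part (b) of Proposition~\ref{max_graph_decomp} in place of part (a). By Lemma~\ref{Ic_entries}, it suffices to fix $\lfloor r/2 \rfloor + 1 \leqslant a \leqslant \lfloor (n-1)/2 \rfloor$, describe the set of $b \in \Z/n\Z$ for which the graph $G_b = G\ \CMamalg\ \underset{P_b}\bullet \underset{e_{k+1}}\rightarrow \underset{P_b}\bullet$ (where $G$ is the maximal graph in $U(a,0)$, and $e_{k+1}$ is a loop since $j=0$) admits an Eulerian path from $P_a$, and to compute the signed count $\sum_{w \in \uni_{P_a}(G_a)} \sgn(w)$ for $b = a$.

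First I would apply Proposition~\ref{max_graph_decomp}(b) to decompose $G = H_t \ \CMamalg\ G'$ for some $0 \leqslant t \leqslant r$, where $G'$ is a chain of edge pairs running from $P_a$ through $P_{a - s_r}, P_{a - s_r - s_{r-1}}, \dots$ down to the connection vertex $P_{a - s_r - \dots - s_{t+1}}$, which lies in $\supp(H_t)$. Since $e_{k+1}$ is a loop, $G_b$ is connected only if $b$ is a vertex of non-zero degree in $G$. The non-zero-degree vertices of $G$ have indices in the set $\{a, a - s_r, \dots, a - s_r - \dots - s_{t+1}\} \cup \supp(H_t)$, and this set is contained in $[-\lceil r/2 \rceil \interval a]$ (using Lemma~\ref{max_t}(a) together with $\supp(H_t) \subseteq \supp(H_r) = [-\lceil r/2 \rceil, \lfloor r/2 \rfloor]$). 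This immediately yields the first $\lceil (n-1)/2 \rceil - \lceil r/2 \rceil$ zero columns, and also yields the upper-triangular zeros of $\LT$, since entries with $b > a$ vanish.

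To compute the diagonal entry at $b = a$, I would observe that $G_a$ has exactly the shape of the graph in Lemma~\ref{Eulerian_sum_2}: a flower at $P_0$ (namely $H_t$) together with a tail extending to $P_a$ and terminating in the loop $e_{k+1}$. As in the proof of Lemma~\ref{upper_triang_part}, a brief case analysis is required depending on whether the connection vertex is $P_0$ itself or a non-zero vertex of $\supp(H_t)$; in the latter case one pair of edges from $H_t$ is absorbed into the tail, decreasing the petal count by one. In either sub-case the number $\alpha$ of petals satisfies $\alpha \leqslant r$, and Lemma~\ref{Eulerian_sum_2} gives $\sum_{w \in \uni_{P_a}(G_a)} \sgn(w) = \pm 2\alpha!$, which is non-zero in $F$ since $\cha(F)$ does not divide $r!$. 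The only mild subtlety is this case analysis for the location of the connection vertex, which is handled exactly as in the proof of Lemma~\ref{upper_triang_part}; everything else is a direct reading-off of the structure furnished by Proposition~\ref{max_graph_decomp}(b).
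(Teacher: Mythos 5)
Your proposal is correct and follows the same route the paper takes: the paper's own proof of Lemma~\ref{lower_triang_part} is a one-line remark that it mirrors Lemma~\ref{upper_triang_part} using Proposition~\ref{max_graph_decomp}(b) instead of part (a), and your expansion fills in exactly the steps one would get by reflecting the proof of Lemma~\ref{upper_triang_part} — constraining the possible placements of the loop $e_{k+1}$ to the support of $G$, reading off that this support lies in $[-\lceil r/2 \rceil \interval a]$, and applying Lemma~\ref{Eulerian_sum_2} at $b=a$ to get the diagonal entry $\pm 2\alpha!$ with $\alpha\leqslant r$.
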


\begin{proof}
The proof mirrors that of Lemma \ref{upper_triang_part}, except that here we appeal to Proposition~\ref{max_graph_decomp}(b) for the
the structure of the maximal graph $G$ instead of Proposition~\ref{max_graph_decomp}(a).
\end{proof}

Lemmas \ref{upper_triang_part} and \ref{lower_triang_part} show that the upper and lower portions of $\Ic(L_0)'$ have the structure as in (\ref{Ic_L_0}), and that $\UT$ and $\LT$ are nonsingular under our characteristic assumption. It remains to consider the rows corresponding to $a\in[-\lceil r/2\rceil\interval \lfloor r/2\rfloor]$. By Proposition \ref{max_graph_decomp} the maximal graph in $U(a,0)$ for $a$ in this range is $H_r$.

\begin{lemma}\label{N_lemma}
The middle rows of $\Ic(L_0)'$ corresponding to $a\in [-\lceil r/2\rceil\interval \lfloor r/2\rfloor]$ are
\[\left(\begin{array}{ccc}{\bf 0}&N&{\bf 0}\end{array}\right)\ ,\]
up to multiplication of each column by $\pm 1$, where $N$ is the $(r +1) \times (r + 1)$ matrix
\item 
\begin{equation} 
N = (r-1)!\cdot\left(
\begin{array}{ccccccccccc}
2&1&1&\cdots&1&r&1&\cdots&1&1&1\\
1&2&1&\cdots&1&r&1&\cdots&1&1&1\\
1&1&2&\cdots&1&r&1&\cdots&1&1&1\\
\vdots&\vdots&\vdots&\ddots&\vdots&\vdots&\vdots&&\vdots&\vdots&\vdots\\
1&1&1&\cdots&2&r&1&\cdots&1&1&1\\
r&r&r&\cdots&r&r(r+1)&r&\cdots&r&r&r\\
1&1&1&\cdots&1&r&2&\cdots&1&1&1\\
\vdots&\vdots&\vdots&&\vdots&\vdots&\vdots&\ddots&\vdots&\vdots&\vdots\\
1&1&1&\cdots&1&r&1&\cdots&2&1&1\\
1&1&1&\cdots&1&r&1&\cdots&1&2&1\\
1&1&1&\cdots&1&r&1&\cdots&1&1&2\\
\end{array}
\right).
\end{equation}
\end{lemma}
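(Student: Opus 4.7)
The plan is to read off each middle row of $\Ic(L_0)'$ directly from the combinatorics of Eulerian paths, using Proposition~\ref{max_graph_decomp} to identify the maximal graph and Lemma~\ref{Eulerian_sum_1} to evaluate the signed counts.

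Fix $a \in [-\lceil r/2 \rceil \interval \lfloor r/2 \rfloor] = \supp(H_r)$. Since $j = 0$ we have $a' = a \in \supp(H_r)$, so $a$ lies in $R_r(a) \cap \supp(H_r)$ via the empty partial sum. Hence $t = r$ is admissible in Lemma~\ref{reachable_H}, and as $r$ is automatically the largest possible value of $t$, Proposition~\ref{max_graph_decomp} tells us that the maximal graph in $U(a,0)$ is $\hat G = H_r$ (the connecting subgraph $\hat G'$ being empty when $t = r$). Moreover, the signed counts computed below will be nonzero under our hypothesis on $\cchar(F)$, so $H_r$ also belongs to $U_{nz}(a,0)$ and is therefore the maximal element there; Lemma~\ref{Ic_entries} then applies with $G = H_r$.

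For each $b \in \Z/n\Z$, the graph $G_b = H_r\ \CMamalg\ (\text{loop } e_{k+1} \text{ at } P_b)$ differs from $H_r$ only by a loop at $P_b$. If $b \notin \supp(H_r)$, the loop is isolated from $H_r$, so $G_b$ is disconnected and $\uni_{P_a}(G_b) = \emptyset$; this forces $(\Ic(L_0)')_{a,b} = 0$ and accounts for the two $\mathbf{0}$ blocks flanking $N$. If $b \in \supp(H_r)$, then $G_b$ is precisely the graph $\Gamma$ treated in Lemma~\ref{Eulerian_sum_1} with $\alpha = r$, centre $P_0$, and loop at $P_b$. Applying that lemma, the unsigned count equals $(r+1)! = r(r+1)(r-1)!$ when $a=b=0$; equals $2(r-1)!$ when $a=b \neq 0$; equals $r! = r(r-1)!$ when exactly one of $a,b$ is $0$; and equals $(r-1)!$ otherwise. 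These are exactly the entries of $N$ after pulling out the common factor $(r-1)!$, with the distinguished row (resp.\ column) carrying the entries $r,\dots,r(r+1),\dots,r$ corresponding to $a=0$ (resp.\ $b=0$).

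Finally, Lemma~\ref{Eulerian_sum_1} guarantees that for each fixed $b$ the signature $\sgn(w)$ is constant in $a$, so the only sign ambiguity is a global $\pm 1$ per column. Absorbing these into the column rescalings permitted by the statement yields exactly the block form $(\mathbf{0}\ N\ \mathbf{0})$ claimed. No substantive obstacle is anticipated: the argument is a direct assembly of Proposition~\ref{max_graph_decomp} and Lemma~\ref{Eulerian_sum_1}, the only bookkeeping being the identification of the four entry-types in $N$ with the four cases of Lemma~\ref{Eulerian_sum_1}.
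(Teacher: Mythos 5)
Your argument is correct and follows essentially the same route as the paper: identify the maximal graph in $U(a,0)$ as $H_r$ via Proposition~\ref{max_graph_decomp}, observe that connectedness of $G_b$ forces $b \in \supp(H_r)$ (giving the flanking zero blocks), and then read off the four entry-types from the four cases of Lemma~\ref{Eulerian_sum_1} with $\alpha = r$, using the ``same sign for fixed $b$'' conclusion of that lemma to justify the column-wise $\pm 1$ normalisation.

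One small point worth noting: the paper states just before the lemma that ``By Proposition~\ref{max_graph_decomp} the maximal graph in $U(a,0)$ for $a$ in this range is $H_r$,'' without spelling out the $t=r$ verification via Lemma~\ref{reachable_H}; you supplied that detail explicitly, which is fine and arguably tidier. Your remark that the nonvanishing needed to place $H_r$ in $U_{nz}(a,0)$ relies on the characteristic hypothesis is also a correct observation that the paper leaves implicit, since Lemma~\ref{N_lemma} sits inside the proof of Proposition~\ref{Ic_L_j_ker}(b) where that hypothesis is in force.
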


\begin{proof}
Let $a\in[-\lceil r/2\rceil\interval \lfloor r/2\rfloor]$. The $(a,b)^{\rm th}$ entry of $\Ic(L_0)'$ is 
$\sum_{w\in \uni_{P_a}(G_b)}\sgn(w)$, where $G_b$ is $H_r$ with an additional edge $e_{k+1}$, being a loop, 
placed at $P_b$. For the graph $G_b$ to be connected, we require $b\in [-\lceil r/2\rceil\interval \lfloor r/2\rfloor]$. This shows that the $(a,b)^{\rm th}$ entry of $\Ic(L_0)'$ is zero when $b\notin[-\lceil r/2\rceil\interval \lfloor r/2\rfloor]$. 

Assume $b\in [-\lceil r/2\rceil\interval \lfloor r/2\rfloor]$. Lemma \ref{Eulerian_sum_1} with $\alpha=r$ gives us

\[N_{(a,b)}=\sum_{w\in \uni_{P_a}(G_b)}\sgn(w)=\begin{cases}
\pm (r+1)!\ ,\ &\text{ if $a=0$ and $b=0$}\\
\pm 2(r-1)!\ ,\ &\text{ if $a=b$ and $a,b\neq 0$}\\
\pm r!\ ,\ &\text{ if $a=0$ and $b\neq 0$, or $a\neq 0$ and $b=0$}\\
\pm (r-1)!\ ,\ &\text{ otherwise,}
\end{cases}\]
where for fixed $b$, either all entries $N_{(a,b)}$ are positive, or all entries $N_{(a,b)}$ are negative. This shows that $N$ has the required form.
\end{proof}

\begin{lemma}\label{N_nonsingular}
Suppose the characteristic of $F$ does not divide $(2r+1)r!$. Then $\det(N) \neq 0$ in $F$.
\end{lemma}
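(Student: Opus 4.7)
The plan is to recognize $M := N/(r-1)!$ as a rank-one perturbation of a simple invertible diagonal matrix, and then apply the matrix determinant lemma. Specifically, I would write $M = D + uu^T$, where $D$ is the $(r+1) \times (r+1)$ diagonal matrix whose entry in the $0$-slot equals $r$ and whose remaining diagonal entries all equal $1$, and $u$ is the column vector with $u_0 = r$ and $u_i = 1$ for $i \neq 0$. A quick case check on the four kinds of entries of $M$ verifies this: at $(0,0)$ the decomposition produces $r + r^2 = r(r+1)$; at $(0,i)$ or $(i,0)$ with $i \neq 0$ it produces $0 + r = r$; at $(i,i)$ with $i \neq 0$ it produces $1 + 1 = 2$; and at $(i,j)$ with $i \neq j$ both nonzero it produces $0 + 1 = 1$, matching $M$ in every case.

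Next I would apply the matrix determinant lemma, $\det(D + uu^T) = \det(D)\,(1 + u^T D^{-1} u)$. Since $D$ is diagonal with a single entry equal to $r$ and $r$ entries equal to $1$, one has $\det(D) = r$. The inverse $D^{-1}$ replaces the diagonal $r$ by $1/r$, and so $u^T D^{-1} u = r^2/r + r \cdot 1 = 2r$, the first summand being the contribution of the $0$-slot and the second the sum of the remaining $r$ unit squares. This yields $\det(M) = r(2r+1)$ and hence
\[ \det(N) = ((r-1)!)^{r+1}\, r\,(2r+1). \]

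The hypothesis that $\cha(F)$ does not divide $(2r+1)\, r! = (2r+1)\cdot r \cdot (r-1)!$ makes each of the factors $(r-1)!$, $r$, and $2r+1$ a unit in $F$, so $\det(N) \neq 0$ in $F$, as required. This argument presents no real obstacle; the only mildly nontrivial step is guessing the correct decomposition $M = D + uu^T$, which becomes natural once one observes that the submatrix of $M$ obtained by deleting the $0$-row and $0$-column has the classical form $I + J$ with $J$ the all-ones matrix, and that the central row and column follow the same pattern, obtained by rescaling the $0$-entry of $u$ from $1$ to $r$.
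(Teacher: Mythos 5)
Your proof is correct, and it takes a genuinely different route from the paper. The paper proves nonsingularity by explicit elementary row operations: after dividing out $(r-1)!$ and rescaling the central row by $r$, it subtracts the central row from all others and then all others from the central row, arriving at a diagonal matrix with a single entry $2r+1$ and the rest equal to $1$. You instead recognize $M = N/(r-1)!$ as the rank-one update $D + uu^T$ of the diagonal matrix $D = \diag(1,\dots,1,r,1,\dots,1)$ (with $r$ in the $0$-slot) by the vector $u$ with $u_0 = r$ and $u_i = 1$ otherwise, and invoke the matrix determinant lemma $\det(D + uu^T) = \det(D)(1 + u^T D^{-1} u) = r(1 + 2r)$. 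Both methods yield $\det(N) = ((r-1)!)^{r+1}\, r\,(2r+1)$, and the hypothesis on $\cha(F)$ (which kills $r$, $(r-1)!$, and $2r+1$ as possible characteristics) finishes either way. Your argument is somewhat slicker and produces the determinant in closed form in one stroke; the paper's row reduction is more elementary and self-contained, requiring no rank-one update identity. One small remark: the matrix determinant lemma as you use it needs $D$ invertible, i.e., $r \neq 0$ in $F$; this is guaranteed by the characteristic hypothesis, but it is cleanest to compute $\det(N)$ over $\Z$ first and only then reduce mod $\cha(F)$, as you implicitly do.
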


\begin{proof}
We perform elementary row operations on the matrix $N$. After dividing $N$ by $(r-1)!$ and dividing
the row corresponding to $0$ by $r$, we obtain

\[\left(
\begin{array}{ccccccccccc}
2&1&1&\cdots&1&r&1&\cdots&1&1&1\\
1&2&1&\cdots&1&r&1&\cdots&1&1&1\\
1&1&2&\cdots&1&r&1&\cdots&1&1&1\\
\vdots&\vdots&\vdots&\ddots&\vdots&\vdots&\vdots&&\vdots&\vdots&\vdots\\
1&1&1&\cdots&2&r&1&\cdots&1&1&1\\
1&1&1&\cdots&1&r+1&1&\cdots&1&1&1\\
1&1&1&\cdots&1&r&2&\cdots&1&1&1\\
\vdots&\vdots&\vdots&&\vdots&\vdots&\vdots&\ddots&\vdots&\vdots&\vdots\\
1&1&1&\cdots&1&r&1&\cdots&2&1&1\\
1&1&1&\cdots&1&r&1&\cdots&1&2&1\\
1&1&1&\cdots&1&r&1&\cdots&1&1&2\\
\end{array}
\right)\ .\]
Next we subtract the row corresponding to $0$ from every other row. Our matrix is transformed to
\[\left(
\begin{array}{ccccccccccc}
1&0&0&\cdots&0&-1&0&\cdots&0&0&0\\
0&1&0&\cdots&0&-1&0&\cdots&0&0&0\\
0&0&1&\cdots&0&-1&0&\cdots&0&0&0\\
\vdots&\vdots&\vdots&\ddots&\vdots&\vdots&\vdots&&\vdots&\vdots&\vdots\\
0&0&0&\cdots&1&-1&0&\cdots&0&0&0\\
1&1&1&\cdots&1&r+1&1&\cdots&1&1&1\\
0&0&0&\cdots&0&-1&1&\cdots&0&0&0\\
\vdots&\vdots&\vdots&&\vdots&\vdots&\vdots&\ddots&\vdots&\vdots&\vdots\\
0&0&0&\cdots&0&-1&0&\cdots&1&0&0\\
0&0&0&\cdots&0&-1&0&\cdots&0&1&0\\
0&0&0&\cdots&0&-1&0&\cdots&0&0&1\\
\end{array}
\right)\ .\]
Finally, we subtract every other row from the row corresponding to $0$. Our matrix becomes
\[\left(
\begin{array}{ccccccccccc}
1&0&0&\cdots&0&-1&0&\cdots&0&0&0\\
0&1&0&\cdots&0&-1&0&\cdots&0&0&0\\
0&0&1&\cdots&0&-1&0&\cdots&0&0&0\\
\vdots&\vdots&\vdots&\ddots&\vdots&\vdots&\vdots&&\vdots&\vdots&\vdots\\
0&0&0&\cdots&1&-1&0&\cdots&0&0&0\\
0&0&0&\cdots&0&2r+1&0&\cdots&0&0&0\\
0&0&0&\cdots&0&-1&1&\cdots&0&0&0\\
\vdots&\vdots&\vdots&&\vdots&\vdots&\vdots&\ddots&\vdots&\vdots&\vdots\\
0&0&0&\cdots&0&-1&0&\cdots&1&0&0\\
0&0&0&\cdots&0&-1&0&\cdots&0&1&0\\
0&0&0&\cdots&0&-1&0&\cdots&0&0&1\\
\end{array}
\right)\ ,\]
which has determinant $2r+1\neq 0$. Thus $N$ is nonsingular under our assumption on the characteristic.
\end{proof}

	\section*{Acknowledgments} 
The authors are grateful to Omer Angel, John Dixon, Irwin Pressman and Bruce Shepherd for helpful comments.

% \bibliographystyle{abbrv}
% \bibliography{k_even}

\end{document}